\documentclass[reqno,12pt]{amsart}
\usepackage{amsfonts}
\numberwithin{equation}{section}
\usepackage{tikz-cd}
\usetikzlibrary{bending,arrows.meta}

\usepackage{indentfirst}
\usepackage{color}
\definecolor{revisionpurple}{RGB}{128,64,160}
\definecolor{sodA}{RGB}{39,99,140}
\definecolor{sodB}{RGB}{169,99,6}
\definecolor{sodM}{RGB}{104,114,125}
\usepackage{amssymb}
\usepackage{mathrsfs}
\usepackage{xy}
\xyoption{all}
\def\Ext{\mbox{\rm Ext}\,} \def\Hom{\mbox{\rm Hom}} \def\dim{\mbox{\rm dim}} 
    \def\mod{\mbox{\rm {mod}}}
   \def\im{\mbox{\rm Im}\,} 
\def\End{\mbox{\rm End}\,}

\theoremstyle{plain}
\newtheorem{theorem}{\bf Theorem}[section]
\newtheorem{lemma}[theorem]{\bf Lemma}
\newtheorem{corollary}[theorem]{\bf Corollary}
\newtheorem{proposition}[theorem]{\bf Proposition}

\theoremstyle{definition}
\newtheorem{definition}[theorem]{\bf Definition}
\newtheorem{remark}[theorem]{\bf Remark}
\newtheorem{example}[theorem]{\bf Example}

\newcommand{\bt}{\begin{theorem}}
\newcommand{\et}{\end{theorem}}
\newcommand{\bl}{\begin{lemma}}
\newcommand{\el}{\end{lemma}}
\newcommand{\bd}{\begin{definition}}
\newcommand{\ed}{\end{definition}}
\newcommand{\bc}{\begin{corollary}}
\newcommand{\ec}{\end{corollary}}
\newcommand{\bp}{\begin{proof}}
\newcommand{\ep}{\end{proof}}
\newcommand{\bx}{\begin{example}}
\newcommand{\ex}{\end{example}}
\newcommand{\br}{\begin{remark}}
\newcommand{\er}{\end{remark}}
\newcommand{\be}{\begin{equation}}
\newcommand{\ee}{\end{equation}}
\newcommand{\ba}{\begin{align}}
\newcommand{\ea}{\end{align}}
\newcommand{\bn}{\begin{enumerate}}
\newcommand{\en}{\end{enumerate}}
\newcommand{\bcs}{\begin{cases}}
\newcommand{\ecs}{\end{cases}}

\makeatletter
\renewcommand{\section}{\@startsection{section}{1}{0mm}
  {-\baselineskip}{0.5\baselineskip}{\bf\leftline}}
\makeatother

\begin{document}
\title[From \(\tau\)-Tilting to \(\tau_2\)-Tilting via Splitting Tilting Triples]{From \(\tau\)-Tilting to \(\tau_2\)-Tilting via Splitting Tilting Triples}
\author{Zhongyuan Wang, Haicheng Zhang, Tiwei Zhao}
\address[Z. Wang, H. Zhang]{Ministry of Education Key Laboratory of NSLSCS, School of Mathematical Sciences, Nanjing Normal University, Nanjing 210023, P. R. China}

\email{2567273369@qq.com (Z. Wang), zhanghc@njnu.edu.cn (H. Zhang)}

\address[T. Zhao]{School of Artificial Intelligence, Jianghan University,
Wuhan 430056, P. R. China}
\email{tiweizhao@jhun.edu.cn}

\subjclass[2020]{16E35, 18E40, 18G80.}
\keywords{$\tau_n$-tilting modules, silting complexes, splitting tilting modules, triangular matrix algebras, derived equivalences}

\begin{abstract}
Let $(A,T,B)$ be a splitting tilting triple. We provide a construction of $\tau_2$-tilting $B$-modules from $\tau$-tilting $A$-modules.
On the other hand, if $A$ is hereditary, by considering the mutation of a certain two-term silting complex of $A$
and its image under the derived equivalence $\mathbf{R}\Hom_A(T,-):D^b(A)\to D^b(B)$, we get a three-term silting complex ${\bf W}$ of $B$.
Under an explicit non-vanishing condition on the cones of morphisms between the two summands, we prove that ${\rm H}^0\boldsymbol{W}$ is a basic $2$-tilting $B$-module and that it coincides with the module obtained from the first construction.
As an application, we generalize the construction of Peng \cite{PENG} from tilting modules to $\tau$-tilting modules.
We also determine the endomorphism algebra of the constructed module as an
explicit triangular matrix algebra, which is derived equivalent to $A$ and $B$ whenever the constructed module is
$2$-tilting, and  give
a full semi-orthogonal decomposition of $K^b(\operatorname{proj}A)$.
\end{abstract}

\maketitle

\section{Introduction}

Tilting theory is one of the most fundamental tools in representation theory of algebras, establishing a bridge between module categories and derived categories of two algebras. The classical tilting modules based on the BGP-reflection functors in \cite{Ausl,BGP,BB} are formalized by
Happel--Ringel \cite{Happel}, which have been generalized to tilting modules of finite projective dimension in \cite{MIY,Happel2}. Given a tilting module $T$ over an algebra,
the well-known Brenner--Butler theorem establishes a correspondence between torsion pairs in the category of $A$-modules and those in the module category of the endomorphism algebra $B:=\operatorname{End}_A T$,
thereby providing a powerful framework for transferring homological information between the two algebras (cf. \cite{ASS}).

In recent years, the theory of $\tau$-tilting modules, introduced by Adachi--Iyama--Reiten
\cite{ADA}, has emerged as a significant generalization of classical tilting theory, giving
a completion of tilting theory from the point of view of the mutations arising
from the theory of cluster algebras (cf. \cite{Fomin1,Fomin2}). Subsequently, the higher analogue of $\tau$-tilting theory, known as $\tau_n$-tilting theory, has been developed by Muchtadi-Alamsyah and Palu \cite{MUC} and further explored by Mart\'{\i}nez and Mendoza \cite{MAR}. These higher $\tau_n$-tilting modules are intimately related with $(n+1)$-term silting complexes in the bounded homotopy category of projective modules, establishing a rich interplay between module categories and derived categories.

Within the classical tilting theory, a distinguished role is played by splitting tilting modules, for which the induced torsion pairs are splitting. Such modules exhibit particularly nice homological properties. For instance, Hoshino \cite{HOS} and Chen--Zhang \cite{CHE} characterized splitting tilting triples in terms of the Auslander--Reiten translation and the injective dimensions of torsion-free modules. These splitting tilting modules provide a fertile ground for constructing new tilting-like objects.

A central theme in the development of tilting theory is the construction and mutation of tilting modules and their higher analogues. Peng \cite{PENG} investigated the behavior of tilting modules under the Brenner--Butler correspondence, and showed that for a hereditary algebra $A$, a certain construction involving classical tilting $A$-modules yields $2$-tilting modules over the corresponding tilted algebra $B$. On the other hand, the mutation theory of silting complexes, developed by Aihara--Iyama \cite{AIHA}, provides a systematic method for producing new silting complexes from existing ones.

The present paper aims to unify and extend these strands of research by establishing a systematic connection between
$\tau_n$-tilting modules and silting complexes in the context of splitting tilting triples. Our main results are threefold. First, we prove that given a splitting tilting triple $(A,T,B)$, the Brenner--Butler correspondence can be refined to construct a $\tau_{2}$-tilting $B$-module from a
$\tau$-tilting $A$-module satisfying some orthogonality conditions (see Theorem \ref{3.3}). Under explicit trace and orthogonality hypotheses, this construction extends the main result of Peng \cite{PENG} from classical tilting modules over hereditary algebras to $\tau$-tilting modules over finite-dimensional algebras admitting a splitting tilting triple.
Second, we investigate the derived category counterpart of this construction. We show that for a hereditary algebra
$A$, the right mutation of any two-term silting complex with cohomologies lying in the corresponding subcategories determined by $T$ induces
a three-term silting complex $\boldsymbol{W}$ of $B$ (see Theorem \ref{main1}).
Under the additional cone non-vanishing condition in Proposition \ref{mac}, we prove that the cohomology ${\rm H}^0\boldsymbol{W}$ is a basic $2$-tilting $B$-module which is isomorphic to the one constructed by Peng \cite{PENG}. Hence, this provides
an interpretation of the construction in \cite{PENG} from the perspective of mutation constructions of silting complexes.
Third, we characterize explicitly the endomorphism algebra of the constructed module, and give a full semi-orthogonal decomposition of
$K^b(\operatorname{proj}A)$.

The paper is organized as follows. In Section 2, we recall some necessary preliminaries in tilting theories.
Section 3 is devoted to giving the construction of $\tau_2$-tilting modules from splitting tilting triples. In Section 4, we establish the derived category counterpart of the construction by applying mutations of silting complexes, and explain the relationship between these two constructions.
In Section 5, we describe the endomorphism algebra of the constructed module
and, when it is $2$-tilting, derive the resulting derived equivalences and
semi-orthogonal decompositions.

Throughout the paper, for any object $X$ in an additive category, we denote by ${\rm add} X$ the subcategory consisting of direct summands of finite direct sums of $X$;
we suppose $A$ is a finite-dimensional $k$-algebra over a field $k$, all modules are finitely generated modules, all subcategories are full and closed under taking direct summands. Denote by $\operatorname{mod} A$ the category of finitely generated right $A$-modules, by $\operatorname{proj}A$ the subcategory of $\operatorname{mod} A$ consisting of all projective $A$-modules, by $D^b(A)$ the bounded derived category of $\operatorname{mod}A$, and by $K^b(\operatorname{proj}A)$ the bounded homotopy category of projective $A$-modules. Let $\tau$ be the Auslander--Reiten translation of $\operatorname{mod} A$. For any module $M$, denote by $M^{\oplus n}$ the direct sum of $n$ copies of $M$ for any positive integer $n$; denote by $|M|$ the number of pairwise non-isomorphic indecomposable direct summands of $M$; denote by $\operatorname{pd} M$ and $\operatorname{id} M$ the projective dimension and injective dimension of $M$, respectively; denote by ${\rm Gen} M$ the subcategory of modules that are generated by $M$, i.e. the modules $N$ admitting an epimorphism $M'\to N$ with $M'\in{\rm add} M$. We denote by $\mathbb{D}$ the $k$-linear duality. For a finite set $S$, we denote by $\# S$ its cardinality.

\section{Preliminaries}

In this section, we recall some necessary preliminaries in tilting theories.

\begin{definition}\label{tilting} (\cite{Happel})
An \( A \)-module \( T \) is called a (classical) {\em tilting module} if the following conditions are satisfied:
\begin{enumerate}
\item[(T1)] \( \operatorname{pd} T \leq 1 \);
\item[(T2)] \( \operatorname{Ext}_A^1(T, T) = 0 \);
\item[(T3)] There exists a short exact sequence
$$\xymatrix{0 \ar[r]& A \ar[r]& T^{0} \ar[r]&T^{1} \ar[r]& 0}$$
with \( T^{0} , T^{1}  \in  {\rm add} T \).
\end{enumerate}
\end{definition}

\begin{definition}\label{ntilting}(\cite{MIY})
Let $n$ be a positive integer. An \( A \)-module \( T \) is called an {\em $n$-tilting module} if the following conditions are satisfied:
\begin{enumerate}
\item[(T1)] \( \operatorname{pd} T \leq n \),
\item[(T2)] \( \operatorname{Ext}_A^i(T, T) = 0 \) for $i=1 , 2 ,\dots , n$.
\item[(T3)] There exists an exact sequence
$$\xymatrix{0 \ar[r]& A \ar[r]& T^{0}\ar[r]& T^{1} \ar[r]& \cdots \ar[r]& T^{n} \ar[r]& 0}$$
with $T^{0} , \cdots , T^{n}  \in {\rm add} T $.
\end{enumerate}
\end{definition}
Clearly, $1$-tilting modules are exactly the tilting modules in Definition \ref{tilting}.

\begin{definition}\label{tp} (\cite{ASS})
A pair \((\mathcal{T}, \mathcal{F})\) of full subcategories of \(\mathrm{mod}\, A\) is called a {\em torsion pair} if the following conditions are satisfied:
\begin{enumerate}
\item[(1)] $\operatorname{Hom}_A(\mathcal{T},\mathcal{F})=0$.
\item[(2)] For any $X\in{\rm mod} A$, there exists a short exact sequence
\begin{equation}\label{tfenjie}
\xymatrix{0 \ar[r]& {\bf t} X \ar[r]& X \ar[r]& {\bf f} X \ar[r]& 0}\end{equation}
such that ${\bf t} X \in \mathcal{T}$ and ${\bf f} X \in \mathcal{F}$.
\end{enumerate}
A torsion pair \((\mathcal{T}, \mathcal{F})\) is {\em splitting} if the exact sequence \eqref{tfenjie} splits for any $X\in\mod A$. Refer to \cite[VI Proposition 1.7]{ASS} for equivalent characterizations of splitting torsion pairs.
\end{definition}
\begin{remark}
Let \((\mathcal{T}, \mathcal{F})\) be a torsion pair. Then we have the following:

$(1)$~The subcategories $\mathcal{T}$ and $\mathcal{F}$ are extension-closed, and they determine each other, namely,
\begin{flalign*}
&\mathcal{T}=\{M\in\mod A~|~\Hom_A(M,\mathcal{F})=0\}=:{^\bot\mathcal{F}}\\
&\mathcal{F}=\{N\in\mod A~|~\Hom_A(\mathcal{T},N)=0\}=:\mathcal{T}^\bot.
\end{flalign*}
Moreover, $\mathcal{T}$ is closed under quotients, and $\mathcal{F}$ is closed under subobjects (cf. \cite[Remark 2.4]{BAU}).

$(2)$~The exact sequence \eqref{tfenjie} is unique in the sense that, if
\begin{equation}\label{tfenjie2}
\xymatrix{0 \ar[r]& X' \ar[r]& X \ar[r]& X'' \ar[r]& 0}\end{equation}
is exact with $X'\in\mathcal{T}$, $X''\in \mathcal{F}$, then the sequences \eqref{tfenjie} and \eqref{tfenjie2} are equivalent (cf. \cite[VI Proposition 1.5]{ASS}). In particular, ${\bf t} X\cong X'$ and ${\bf f} X\cong X''$.

$(3)$~The exact sequence \eqref{tfenjie} is functorial. That is, for any morphism $f:X\rightarrow X'$, fix the decomposition sequences \eqref{tfenjie} of $X$ and $ X'$, then we can complete the following diagram uniquely (cf. \cite[Remark 2.4]{BAU}):
\begin{equation}\label{2.3}
\xymatrix{0 \ar[r]& {\bf t} X  \ar[r]\ar@{-->}[d]_-{{\bf t}f}& X \ar[r]\ar[d]^-f&  {\bf f} X  \ar[r]\ar@{-->}[d]^-{{\bf f}f}& 0\\
0 \ar[r]& {\bf t} X'  \ar[r]& X' \ar[r]& {\bf f} X' \ar[r]& 0.}
\end{equation}
$(4)$~There are natural isomorphisms (cf. \cite[Remark 2.4]{BAU})
\begin{flalign}
&\Hom_A(M,N)\cong\Hom_{A}({\bf f}M, N)~\text{for~any}~M\in\mod A, N\in \mathcal{F};\\
&\Hom_A(M,N)\cong\Hom_{A}(M, {\bf t}N)~\text{for~any}~M\in\mathcal{T}, N\in \mod A\label{thomtg}.
\end{flalign}
\end{remark}
In what follows, we fix a decomposition sequence \eqref{tfenjie} of $X$ for any $X\in\mod A$. Then the diagram \eqref{2.3} defines the functors ${\bf t}:\mod A\rightarrow \mathcal{T}$ and ${\bf f}:\mod A\rightarrow \mathcal{F}$.

Given a tilting $A$-module $T$, define
\(\mathcal{T}(T)=\{M \in {\rm mod} A\,|\,\Ext_A^{1}(T,M)=0\}\) and
\(\mathcal{F}(T)=\{M \in {\rm mod} A\,|\,\Hom_A(T,M)=0\}\).
By \cite{ASS}, $(\mathcal{T}(T), \mathcal{F}(T))$ is a torsion pair in $\mod A$. Let $B=\operatorname{End}_A T$. By \cite[VI Corollary 3.6]{ASS}, the tilting $A$-module $T$ also induces a torsion pair $(\mathcal{X}(T_A), \mathcal{Y}(T_A))$ in $\mod B$, which is given by
\begin{flalign*}
&\mathcal{X}(T)=\{X~|~\Hom_B(X,\mathbb{D}T)=0\}=\{X~|~X\otimes_B T=0\},\\
&\mathcal{Y}(T)=\{Y~|~\Ext^1_B(Y,\mathbb{D}T)=0\}=\{Y~|~{\rm Tor}_1^B(Y,T)=0\}.
\end{flalign*}
A triple \( (A, T, B) \) is called a {\em tilting triple} if $T$ is a tilting $A$-module and \(B=\operatorname{End}_A T\).

\begin{theorem}\label{BB} \textup{(\cite[Theorem 3.8]{ASS})}
Let \( (A, T, B) \) be a tilting triple. Then we have the following:
\begin{enumerate}
\item[(1)] The functors \( \operatorname{Hom}_A(T, -) \) and \( - \otimes_B T \) induce quasi-inverse equivalences between \( \mathcal{T}(T) \) and \( \mathcal{Y}(T) \).
\item[(2)] The functors \( \operatorname{Ext}_A^1(T, -) \) and \( \operatorname{Tor}_1^B(-, T) \) induce quasi-inverse equivalences between \( \mathcal{F}(T) \) and \( \mathcal{X}(T) \).
\end{enumerate}
\end{theorem}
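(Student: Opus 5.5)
Taking as known that $(\mathcal{T}(T),\mathcal{F}(T))$ and $(\mathcal{X}(T),\mathcal{Y}(T))$ are torsion pairs, the plan is to derive everything from one short exact sequence per module, coming from the canonical sequence (T3) for $T$. Put $F=\Hom_A(T,-)$, $F'=\Ext^1_A(T,-)$, $G=-\otimes_B T$, $G'={\rm Tor}_1^B(-,T)$. Writing $A_A$ and $\mathbb{D}({}_AA)$ in terms of $T$ gives two natural exact sequences:
\[0\to F'(M)\otimes_B T\to M\to {\rm Tor}_1^B(F(M)... \]
To make this exact, I will use the standard approach via projective resolutions rather than guessing the shape. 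Let $0\to A\to T^0\to T^1\to 0$ be the sequence from (T3). Apply $F$ to get $0\to F(A)\to F(T^0)\to F(T^1)\to F'(A)\to 0$. Since $F(T^i)$ is projective over $B$, this shows $\operatorname{pd}({}_BT)\le 1$ on the appropriate side, so $G'$ is the only nonzero derived functor of $G$ besides $G$ itself. Also $\End_B(T)\cong A$, which follows by applying $\Hom_A(-,T)$ to (T3) and comparing with $\Hom_B(F(-),F(T))$ on $\operatorname{add}T$, where $F$ is fully faithful.

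\textbf{Step 1: the counit and unit on $\mathcal{T}(T)$.} For $M\in\mathcal{T}(T)$, i.e.\ $\Ext^1_A(T,M)=0$, I will show $M\in{\rm Gen}\,T$. Take a right $\operatorname{add}T$-approximation $T_0\to M$; it is surjective because of (T3) and the Ext-vanishing. Its kernel $K$ again lies in $\mathcal{T}(T)$, using $\operatorname{pd}T\le1$ together with the fact that $F$ is exact on $0\to K\to T_0\to M\to0$. Repeating gives $T_1\to T_0\to M\to 0$. Applying $F$ yields a projective presentation of $F(M)$, and then applying $G$ and using $G F|_{\operatorname{add}T}\cong{\rm id}$ with right exactness gives $GF(M)\cong M$. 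Since $F(K)\to F(T_0)$ is injective and $GF(K)\cong K$, one gets $G'(F(M))=0$, so $F(M)\in\mathcal{Y}(T)$. Symmetrically, for $Y\in\mathcal{Y}(T)$, take a projective presentation $P_1\to P_0\to Y\to0$ with kernel $\Omega$. Then $0\to G(\Omega)\to G(P_0)\to G(Y)\to0$ is exact, since ${\rm Tor}_1(Y,T)=0$, and its terms lie in $\mathcal{T}(T)$ because $G(P_i)\in\operatorname{add}T$ and $\mathcal{T}(T)$ is closed under quotients. Applying $F$ and using $\Ext^1(T,G\Omega)=0$, which follows once one knows $G\Omega\in\mathcal{T}(T)$ by the same argument, together with the five lemma, gives $FG(Y)\cong Y$.

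\textbf{Step 2: part (2).} For $N\in\mathcal{F}(T)$, embed $N$ into a module of $\mathcal{T}(T)$ via the universal extension $0\to N\to E\to T'\to 0$ with $T'\in\operatorname{add}T$, chosen so that $\Ext^1(T,E)=0$. Applying $F$ gives $0\to F(E)\to F(T')\to F'(N)\to 0$, which is exact since $F(N)=0$. Then applying $G$ and using Step 1 gives $0\to G'F'(N)\to GF(E)=E\to T'\to GF'(N)\to0$. Here $E\to T'$ is the original surjection, so $GF'(N)=0$, i.e.\ $F'(N)\in\mathcal{X}(T)$, and $G'F'(N)\cong N$. The reverse composite $F'G'\cong{\rm id}$ on $\mathcal{X}(T)$ is obtained by the dual computation using a projective cover $0\to\Omega\to P\to X\to0$: here $G(X)=0$, so $G'(X)\cong\ker(G\Omega\to GP)$, and one applies $F$ and $F'$ to this sequence.

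\textbf{Main obstacle.} I expect the hardest part to be the bookkeeping in Step 1 and Step 2, specifically checking the closure facts ($K$ and $G\Omega$ in $\mathcal{T}(T)$) and verifying that the connecting maps are the natural units and counits, not just abstract isomorphisms. I would handle this by working throughout with the adjunction $(G,F)$ and checking naturality on $\operatorname{add}T$ and $\operatorname{proj}B$ first.
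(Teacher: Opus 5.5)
The paper gives no proof of this statement; it is quoted from the textbook of Assem--Simson--Skowro\'nski. Your outline is the usual textbook argument: $\operatorname{add}T$-presentations for $\mathcal{T}(T)$, projective presentations for $\mathcal{Y}(T)$, an embedding into a module of $\mathcal{T}(T)$ for $\mathcal{F}(T)$, and a projective cover for $\mathcal{X}(T)$. The architecture is right, but some steps are wrong or incomplete as written.

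\textbf{The claim $\operatorname{pd}{}_BT\le 1$.} Applying $F=\operatorname{Hom}_A(T,-)$ to (T3) gives a projective presentation of the right $B$-module $\operatorname{Ext}_A^1(T,A)$, and says nothing about ${}_BT$. To get $\operatorname{pd}{}_BT\le1$ you must apply $\operatorname{Hom}_A(-,T)$ to (T3). Since $\operatorname{Ext}_A^1(T^1,T)=0$, this gives $0\to\operatorname{Hom}_A(T^1,T)\to\operatorname{Hom}_A(T^0,T)\to{}_BT\to0$ with projective left $B$-modules. You actually need this, or the assumed torsion pair, only in the $\mathcal{X}(T)$-part: there you must know that $\Omega\subseteq P$ lies in $\mathcal{Y}(T)$ so that Step 1 applies to it. The claim $\operatorname{End}_B(T)\cong A$ is never used. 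The aborted display should be removed; the canonical sequence is $0\to F(M)\otimes_BT\to M\to\operatorname{Tor}_1^B(F'(M),T)\to0$. Also, $K\in\mathcal{T}(T)$ in Step 1 comes from surjectivity of $F(T_0)\to F(M)$ together with $\operatorname{Ext}_A^1(T,T_0)=0$, i.e.\ (T2), not from $\operatorname{pd}T\le1$.

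\textbf{The unit for $Y\in\mathcal{Y}(T)$.} Write $\eta\colon\mathrm{id}\to FG$ for the unit and compare $0\to\Omega\to P_0\to Y\to0$ with $0\to FG(\Omega)\to FG(P_0)\to FG(Y)\to0$. Only $\eta_{P_0}$ is known to be invertible, and the snake lemma gives $\ker\eta_Y\cong\operatorname{Coker}\eta_\Omega$. The vanishing of $\operatorname{Ext}_A^1(T,G\Omega)$ only makes the bottom row exact. You still need $\eta_\Omega$ to be an epimorphism (in the presentation version of the five lemma, that $FG(P_1)\to FG(\Omega)$ is onto), and you do not address this.

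Here is a fix. For any $Z\in\operatorname{mod}B$ with projective cover $P\to Z$, the kernel of $G(P)\to G(Z)$ is a quotient of $G(\Omega_BZ)$, hence of a module in $\operatorname{add}T$. So it lies in $\mathcal{T}(T)$, $FG(P)\to FG(Z)$ is onto, and $\eta_Z$ is an epimorphism. Now take $Z=\Omega$.

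\textbf{Naturality in part (2).} As you anticipated, $G'F'(N)\cong N$ is built from a non-canonical $E$, so naturality must be checked. For $h\colon N_1\to N_2$, the composite $N_1\to N_2\to E_2$ extends along $N_1\to E_1$ because $\operatorname{Ext}_A^1(T_1',E_2)=0$. This gives a morphism of short exact sequences. Every map in your construction (the connecting maps for $\operatorname{Hom}_A(T,-)$ and $-\otimes_BT$, and the counit) is natural for such morphisms. Taking $h=\mathrm{id}$ also shows independence of the choice of $E$. Alternatively, embed $N$ into its injective envelope, which lies in $\mathcal{T}(T)$; this works equally well and makes the extension of morphisms automatic.

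With these repairs your argument proves the theorem.
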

It is well-known that for every tilting triple \( (A, T, B) \), we have a derived equivalence (cf. \cite[Theorem~2.1]{CLI}, \cite[Theorem 6.4]{RIC})
\begin{equation}\label{RHom}
\mathbf{R}\Hom_A(T, -): D^b(A) \longrightarrow D^b(B).\end{equation}

According to \cite[VI Definition 5.1]{ASS}, we call a tilting module $T$ {\em splitting} if the torsion pair $(\mathcal{X}(T),\mathcal{Y}(T))$ in $\operatorname{mod} B$ is splitting.
In this case, the tilting triple \( (A, T, B) \) is called a {\em splitting tilting triple}.
We have the following characterizations of splitting tilting modules (cf. \cite[Lemma 2.10]{CHE}, \cite{HOS}).
\begin{lemma}\label{splitting}
Let \( (A, T, B) \) be a tilting triple. Then the following are equivalent.
\begin{enumerate}
\item[(1)] \( T_A \) is a splitting tilting module.
\item[(2)] \( \operatorname{Hom}_A(T, \tau_A M) \cong \tau_B \operatorname{Hom}_A(T, M) \) for any \( M \in \mathcal{T}(T_A) \).
\item[(3)] \( \operatorname{Ext}_A^1(T, \tau_A^{-1} N) \cong \tau_B^{-1} \operatorname{Ext}_A^1(T, N) \) for any \( N \in \mathcal{F}(T_A) \).
\item[(4)] \( \operatorname{inj.dim}\, N \leq 1 \) for any \( N \in \mathcal{F}(T_A) \).
\end{enumerate}
\end{lemma}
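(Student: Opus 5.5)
The plan is to take condition (1) as the hub, with the basic definition of a splitting torsion pair: $(\mathcal{X}(T),\mathcal{Y}(T))$ splits if and only if $\Ext^1_B(Y,X)=0$ for all $Y\in\mathcal{Y}(T)$ and $X\in\mathcal{X}(T)$ (cf. \cite[VI Proposition 1.7]{ASS}). I would prove $(1)\Leftrightarrow(4)$ homologically and then $(1)\Leftrightarrow(2)$ and $(1)\Leftrightarrow(3)$ via Auslander--Reiten theory.

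For $(1)\Leftrightarrow(4)$, I would transport the Ext-vanishing through the derived equivalence $F=\mathbf{R}\Hom_A(T,-)$ of \eqref{RHom}. By Theorem \ref{BB}:
\begin{itemize}
\item $Y\in\mathcal{Y}(T)$ corresponds to $M=Y\otimes_BT\in\mathcal{T}(T)$ with $F(M)\cong Y$;
\item $X\in\mathcal{X}(T)$ corresponds to $N\in\mathcal{F}(T)$ with $F(N)\cong X[-1]$, since $\Hom_A(T,N)=0$.
\end{itemize}
Hence
\[
\Ext^1_B(Y,X)\cong\Hom_{D^b(B)}(FM,FN[2])\cong\Ext^2_A(M,N).
\]
So splitting is equivalent to $\Ext^2_A(\mathcal{T}(T),\mathcal{F}(T))=0$.

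It remains to see that this vanishing is equivalent to $\operatorname{id}N\le1$ for all $N\in\mathcal{F}(T)$. One direction is immediate. For the converse, let $L$ be an arbitrary module with torsion sequence $0\to{\bf t}L\to L\to{\bf f}L\to0$. Applying $\Hom_A(-,N)$ reduces $\Ext^2_A(L,N)=0$ to two vanishings, $\Ext^2_A({\bf t}L,N)=0$ and $\Ext^2_A({\bf f}L,N)=0$. The first holds by hypothesis. For the second, I would embed ${\bf f}L\hookrightarrow I$ into an injective module. Here I expect to use that $\mathcal{F}(T)$ is closed under submodules and cogenerated by $\tau T$ ($\mathcal{F}(T)=\operatorname{Cogen}\tau T$). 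I also expect to use $\Ext^2_A(\mathcal{F},\mathcal{F})$-type vanishing: since $\operatorname{pd}T\le1$, $\Ext^2_A(T,-)=0$, and $\Ext^2_A(\tau T,N)\cong\mathbb{D}\,\overline{\Hom}(\ldots)$ by the AR formula.

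This step, showing $\Ext^2_A(\mathcal{F},\mathcal{F})=0$ in general, is the main obstacle. My first attempt would be a dimension-shift: resolve $F'\in\mathcal{F}$ as $0\to F'\to I^0\to I^0/F'\to0$, and use that $I^0/F'$ has a torsion part in $\mathcal{T}$, where the hypothesis applies. Failing that, I would cite \cite{HOS} for this part.

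For $(1)\Leftrightarrow(2)$, I would use the classical fact that a torsion pair splits iff $\tau_B^{-1}\mathcal{X}$-type conditions hold, namely $\tau_BY\in\mathcal{Y}$ for all $Y\in\mathcal{Y}$. Indeed, $\Ext^1_B(Y,X)\cong\mathbb{D}\overline{\Hom}_B(X,\tau_BY)$, and this vanishes for all $X$ exactly when $\tau_BY$ is torsion-free. Next, for $M\in\mathcal{T}(T)$, the Brenner--Butler AR-sequence comparison (applying $\Hom_A(T,-)$ to an almost split sequence ending in an indecomposable non-Ext-projective $M$) shows that $\Hom_A(T,\tau_AM)\cong\tau_B\Hom_A(T,M)$ precisely when $\tau_B\Hom_A(T,M)$ lies in $\mathcal{Y}(T)$. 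This follows \cite[VI]{ASS}, where connecting sequences are treated. Statement $(3)$ is the dual argument using $\tau^{-1}$, $\mathcal{F}(T)$ and $\operatorname{Ext}^1_A(T,-)$, with splitting characterized dually by $\tau_B^{-1}\mathcal{X}\subseteq\mathcal{X}$.
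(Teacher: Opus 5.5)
\textbf{Gap in $(1)\Rightarrow(4)$.} Your reduction of splitting to $\operatorname{Ext}^2_A(\mathcal{T}(T),\mathcal{F}(T))=0$ via $\mathbf{R}\operatorname{Hom}_A(T,-)$ is correct, and so is $(4)\Rightarrow(1)$. The gap is the converse. After the torsion decomposition you need $\operatorname{Ext}^2_A({\bf f}L,N)=0$, and you propose to get it from $\operatorname{Ext}^2_A(\mathcal{F}(T),\mathcal{F}(T))=0$ using only general facts ($\mathcal{F}(T)=\operatorname{Cogen}\tau T$, $\operatorname{pd}T\le 1$, the AR formula). That vanishing is false for general tilting modules.

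Take the Nakayama algebra $A$ with arrows $\alpha\colon 1\to 2$, $\beta\colon 2\to 1$, modulo the path $1\to2\to1$. Here $P_1$ is uniserial with composition factors $S_1,S_2$, and $P_2\cong I_2$ is uniserial with factors $S_2,S_1,S_2$, so $\operatorname{rad}P_2\cong P_1$.
\begin{itemize}
\item $T=S_2\oplus P_2$ is tilting: use $0\to P_1\to P_2\to S_2\to 0$, injectivity of $P_2$, and $\operatorname{Ext}^1_A(S_2,S_2)=0$.
\item $\operatorname{Hom}_A(T,S_1)=0$, so $S_1\in\mathcal{F}(T)$.
\item The minimal resolution $0\to P_1\to P_2\to P_1\to S_1\to 0$ gives $\operatorname{Ext}^2_A(S_1,S_1)\cong k$.
\end{itemize}
So the vanishing must come from the splitting hypothesis. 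Your dimension shift does not extract it: from $0\to F'\to I^0\to I^0/F'\to 0$ you only get $\operatorname{Ext}^2_A(I^0,N)\to\operatorname{Ext}^2_A(F',N)\to\operatorname{Ext}^3_A(I^0/F',N)$, and neither outer term is controlled by $\operatorname{Ext}^2_A(\mathcal{T}(T),\mathcal{F}(T))=0$. As written, this direction ends in the fallback citation of \cite{HOS}. That is also all the paper does here: it gives no proof and cites \cite{CHE} and \cite{HOS}.

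\textbf{The fix.} Replace the injective envelope by a left $\mathcal{T}(T)$-approximation whose cokernel lies in $\operatorname{add}T$. For any $L\in\operatorname{mod}A$, let $\xi_1,\dots,\xi_d$ span $\operatorname{Ext}^1_A(T,L)$ and form the universal extension $0\to L\to M_L\to T^{\oplus d}\to 0$. The connecting map $\operatorname{Hom}_A(T,T^{\oplus d})\to\operatorname{Ext}^1_A(T,L)$ is onto and $\operatorname{Ext}^1_A(T,T)=0$, so $M_L\in\mathcal{T}(T)$. Since $\operatorname{pd}T\le 1$, the exact sequence $\operatorname{Ext}^2_A(M_L,N)\to\operatorname{Ext}^2_A(L,N)\to\operatorname{Ext}^3_A(T^{\oplus d},N)=0$ shows that $\operatorname{Ext}^2_A(\mathcal{T}(T),N)=0$ already forces $\operatorname{id}N\le 1$. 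This gives a complete proof of $(1)\Leftrightarrow(4)$ without any torsion decomposition.

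\textbf{On $(2)$ and $(3)$.} Your reduction to $\tau_B\mathcal{Y}(T)\subseteq\mathcal{Y}(T)$ (resp.\ $\tau_B^{-1}\mathcal{X}(T)\subseteq\mathcal{X}(T)$) is right, but the comparison step is only asserted and is subtler than you indicate. Even when $T$ splits, $\tau_AM$ can leave $\mathcal{T}(T)$: in Example \ref{genuine-example}, $\tau_A(\operatorname{rad}P_1)=P_3\notin\mathcal{T}(T)$. So almost split sequences of $\operatorname{mod}A$ lying in $\mathcal{T}(T)$ do not suffice.

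For indecomposable $M\in\mathcal{T}(T)\setminus\operatorname{add}T$, applying $\operatorname{Hom}_A(T,-)$ to $0\to\tau_AM\to E\to M\to 0$ does give a short exact sequence. Once $\tau_B\operatorname{Hom}_A(T,M)\in\mathcal{Y}(T)$, its last map is right almost split in $\operatorname{mod}B$. But that only makes $\tau_B\operatorname{Hom}_A(T,M)$ a direct summand of $\operatorname{Hom}_A(T,{\bf t}\tau_AM)$. You still need ${\bf t}(\tau_AM)$ to be indecomposable, i.e.\ that the relative Auslander--Reiten translate of the torsion class $\mathcal{T}(T)$ is ${\bf t}\tau_A$ (Auslander--Smal\o{}). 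The dual statement, with ${\bf f}\tau_A^{-1}$ on $\mathcal{F}(T)$, is needed for $(3)$.
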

For later use, we record the following extension of
\cite[Lemmas 1.3--1.6]{PENG}.
\begin{lemma}\label{xt}
Let \( (A, T, B) \) be a tilting triple.
\begin{enumerate}
\item[(1)] For any $M , N \in \mathcal{T} (T)$, we have
\[\operatorname{Ext}_B^i(\operatorname{Hom}_A ( T , M ) , \operatorname{Hom}_A ( T , N )) \cong \operatorname{Ext}_A^i (M ,N),~i=0,1.\]
\item[(2)] For any $M , N \in \mathcal{F} (T)$, we have
\[\operatorname{Ext}_B^i(\operatorname{Ext}_A^1 ( T , M ), \operatorname{Ext}_A^1 ( T , N )) \cong \operatorname{Ext}_A^i (M ,N),~i=0,1.\]
\item[(3)] For any $M \in \mathcal{T} (T)$ and $N \in \mathcal{F} (T)$, we have
\[\operatorname{Ext}_B^i(\operatorname{Hom}_A ( T , M ), \operatorname{Ext}_A^1 ( T, N )) \cong \operatorname{Ext}_A^{i+1} (M ,N),~i \ge 0.\]
\item[(4)] For any $M \in \mathcal{F} (T)$ and $N \in \mathcal{T} (T)$, we have
\[\operatorname{Ext}_B^i(\operatorname{Ext}_A^1 ( T , M ) , \operatorname{Hom}_A ( T , N )) \cong \operatorname{Ext}_A^{i-1} (M ,N),~i \ge 1.\]
\end{enumerate}
\end{lemma}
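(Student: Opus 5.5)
The plan is to work entirely inside the derived equivalence $F:=\mathbf{R}\Hom_A(T,-):D^b(A)\to D^b(B)$ of \eqref{RHom}, which is fully faithful, so that
\[\Hom_{D^b(B)}(FX,FY[j])\cong\Hom_{D^b(A)}(X,Y[j])\]
for all $X,Y$ and all $j$. Since $T$ is tilting, $\operatorname{pd}T\le 1$. Hence for $M\in\mathcal{T}(T)$ the complex $FM$ has cohomology concentrated in degree $0$ and $FM\cong\Hom_A(T,M)$. For $N\in\mathcal{F}(T)$ the complex $FN$ has cohomology only in degree $1$, so $FN\cong\Ext^1_A(T,N)[-1]$. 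Each of the four statements then reduces to rewriting $\Ext^i_B(U,V)=\Hom_{D^b(B)}(U,V[i])$ and shifting.

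\textbf{Case (1).} Here $\Ext^i_B(\Hom_A(T,M),\Hom_A(T,N))\cong\Hom_{D^b(A)}(M,N[i])=\Ext^i_A(M,N)$. In fact this holds for every $i\ge 0$, which in particular gives $i=0,1$.

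\textbf{Case (2).} Write $\Ext^1_A(T,M)\cong FM[1]$ and $\Ext^1_A(T,N)\cong FN[1]$. Then $\Ext^i_B(FM[1],FN[1])\cong\Hom(FM,FN[i])\cong\Ext^i_A(M,N)$.

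\textbf{Case (3).} With $M\in\mathcal{T}(T)$ and $N\in\mathcal{F}(T)$,
\[\Ext^i_B(FM,FN[1])=\Hom(FM,FN[i+1])\cong\Ext^{i+1}_A(M,N).\]

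\textbf{Case (4).} With $M\in\mathcal{F}(T)$ and $N\in\mathcal{T}(T)$,
\[\Hom(FM[1],FN[i])\cong\Hom(M,N[i-1])=\Ext^{i-1}_A(M,N)\]
for $i\ge 1$. For $i=0$ the group is $\Hom_B(\mathcal{X},\mathcal{Y})=0$, consistent with the convention that negative Ext vanishes.

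\textbf{Main obstacle.} The only step needing care is justifying the identifications $FM\cong H^0(FM)$ and $FN\cong H^1(FN)[-1]$. I would argue this as follows: a complex in $D^b(B)$ whose cohomology is concentrated in a single degree is quasi-isomorphic to that cohomology module placed in that degree. Here $H^j(FX)=\Ext^j_A(T,X)$, which vanishes for $j\ge 2$ because $\operatorname{pd}T\le 1$. For $X=M$ the definition of $\mathcal{T}(T)$ kills $H^1$, and for $X=N$ the definition of $\mathcal{F}(T)$ kills $H^0$.

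Alternatively, one could cite \cite[Lemmas 1.3--1.6]{PENG} for the hereditary case and note that those proofs used only $\operatorname{pd}T\le 1$. The derived-category argument is cleaner and works for arbitrary finite-dimensional $A$.
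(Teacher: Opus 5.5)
Your proposal is correct and follows the same route as the paper. The paper identifies $\mathbf{R}\operatorname{Hom}_A(T,L)\cong\operatorname{Hom}_A(T,L)$ for $L\in\mathcal{T}(T)$ and $\mathbf{R}\operatorname{Hom}_A(T,K)\cong\operatorname{Ext}_A^1(T,K)[-1]$ for $K\in\mathcal{F}(T)$, then reads off all four isomorphisms from the full faithfulness of the derived equivalence. Your justification of these identifications (cohomology concentrated in a single degree, using $\operatorname{pd}T\le 1$) spells out a step the paper leaves implicit.
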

\begin{proof}
For any $L\in\mathcal{T}(T)$, the equivalence \eqref{RHom} gives
\(\mathbf{R}\Hom_A(T,L)\cong\Hom_A(T,L)\). For any
$K\in\mathcal{F}(T)$, it gives
\(\mathbf{R}\Hom_A(T,K)\cong\Ext_A^1(T,K)[-1]\).
The four asserted isomorphisms now follow by computing morphisms between these objects and their shifts in $D^b(B)$. This argument also shows that the assertions hold for an arbitrary tilting triple, without a hereditary assumption on $A$.
\end{proof}

\begin{definition}\label{tau} \textup{(\cite[Definition 0.1]{ADA})}
Let $M$ be an $A$-module.
\begin{enumerate}
\item[(a)] The module $M$ is called a {\em \(\tau\)-rigid} module if
    $\operatorname{Hom}_{A}(M, \tau M) = 0.$
\item[(b)] The module $M$ is called a {\em \(\tau\)-tilting} module if \( M \) is \(\tau\)-rigid and
    $|M| = |A|.$
\item[(c)] The module $M$ is called a {\em support \(\tau\)-tilting} module if there exists an idempotent \( e \) of \( A \) such that \( M \) is a \(\tau\)-tilting module over \( A / \langle e \rangle \).
\end{enumerate}
\end{definition}

According to \cite{IYA}, for any positive integer $n$, define $\tau_n:=\tau_{A,n}$ by $\tau_n M=\tau (  \Omega^{n-1} M )$ for any $A$-module $M$, where $\Omega^{0} M:=M$ and $\Omega^{n-1} M$ is the $(n-1)$-th syzygy of $M$. Clearly, we have $\tau_1=\tau$.

\begin{definition}\label{tau-n}\textup{(\cite[Definition 2.2]{MUC})}
Let $n$ be a positive integer and $M\in\mod A$.
\begin{enumerate}
\item[(a)] The module $M$ is called a {\em strongly \( \tau_n \)-rigid} module if $\operatorname{Hom}_A (M, \tau_n M) = 0$ and $\operatorname{Ext}_A^i(M, M) = 0$ for any $1\leq i<n.$
\item[(b)] The module $M$ is called a {\em \( \tau_n \)-{tilting}} module if \( M \) is a strongly \( \tau_n \)-rigid module and $ |M| = |A|$.
\item[(c)] The module $M$ is called a {\em support \( \tau_n \)-tilting} module if there exists an idempotent \( e \) of \( A \) such that \( M \) is a \( \tau_n \)-tilting module over \( A/\langle e \rangle \).
\end{enumerate}
\end{definition}

\begin{remark}\label{n-1,n}
As pointed out in \cite[Remark 2.1]{MUC} , the functor $\tau_{n}$ in \cite{IYA} corresponds to the functor $\tau_{n+1}$ in \cite{MUC}. Here we adopt the notation from \cite{IYA}.
After reindexing, the additional condition $\Ext_A^n(M,M)=0$ in \cite[Definition 2.2]{MUC} is automatic from $\Hom_A(M,\tau_{A,n}M)=0$ by Lemma \ref{tau-gen}, since $M\in\operatorname{Gen} M$. Thus Definition \ref{tau-n} agrees with the reindexed definition in \cite{MUC}.
\end{remark}

\begin{lemma}\label{tau-gen} \textup{(\cite[Corollary 3.5(c)]{MAR})}
Let $n$ be a positive integer. For any $A$-modules $X$ and $Y$, we have
$\operatorname{Hom}_{A}(X, \tau_n Y) = 0~\text{if and only if}~\operatorname{Ext}_{A}^{n}(Y, \operatorname{Gen} X) = 0.$
\end{lemma}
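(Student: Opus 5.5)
We prove the two implications separately, both via the defining Auslander--Reiten formulas for the higher translate $\tau_n Y=\tau(\Omega^{n-1}Y)$. Write $Z:=\Omega^{n-1}Y$. Dimension shifting along a minimal projective resolution of $Y$ gives a natural isomorphism $\Ext_A^n(Y,N)\cong\Ext_A^1(Z,N)$ for every $A$-module $N$. So it suffices to prove the following: $\Hom_A(X,\tau Z)=0$ if and only if $\Ext_A^1(Z,\operatorname{Gen}X)=0$. This is the familiar $n=1$ statement of Auslander--Smal\o{} type (cf. \cite{ADA}), and the plan is to reprove it for the module $Z$.

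\textbf{The key formula.} Take a minimal projective presentation $P_1\xrightarrow{p}P_0\to Z\to 0$. Applying $\Hom_A(-,N)$ shows that $\Ext_A^1(Z,N)$ is a quotient of $\operatorname{Coker}\Hom_A(p,N)$. Applying $\mathbb{D}$ and the Nakayama functor $\nu$ gives the exact sequence $0\to\tau Z\to\nu P_1\xrightarrow{\nu p}\nu P_0$. From this one gets the standard identification
\[
\mathbb{D}\Hom_A(N,\tau Z)\cong\operatorname{Coker}\bigl(\Hom_A(P_0,N)\to\Hom_A(P_1,N)\bigr),
\]
natural in $N$. The exact sequence $P_1\to P_0\to Z\to 0$ extends to $0\to\Omega^{1}Z\to P_0\to Z\to 0$, together with the surjection $P_1\to\Omega^1 Z$. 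It follows that $\Ext_A^1(Z,N)$ is isomorphic to the subspace of $\operatorname{Coker}\Hom_A(p,N)$ consisting of classes of maps $P_1\to N$ that factor through $P_1\to\Omega^1Z$. This is the crucial comparison step.

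\textbf{($\Rightarrow$).} Suppose $\Hom_A(X,\tau Z)=0$ and let $N\in\operatorname{Gen}X$, with an epimorphism $X'\to N$ where $X'\in{\rm add}X$. Then $\Hom_A(X',\tau Z)=0$, so by the key formula every map $P_1\to X'$ factors through $p$. Given a map $g:P_1\to N$, projectivity of $P_1$ lifts it to a map $P_1\to X'$. That lift factors through $p$, so $g$ does as well. Hence $\operatorname{Coker}\Hom_A(p,N)=0$, and its subquotient $\Ext_A^1(Z,N)$ also vanishes.

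\textbf{($\Leftarrow$).} Suppose $\Ext_A^1(Z,\operatorname{Gen}X)=0$ and let $f:X\to\tau Z$. Put $N:=\im f\in\operatorname{Gen}X$, which is a submodule of $\tau Z$. I want to show $f=0$, and I expect this direction to be the main obstacle. The approach is the argument of \cite[Proposition 5.8]{AS}-type, which I would adapt as follows.

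The inclusion $N\hookrightarrow\tau Z$ corresponds, under the AR formula $\mathbb{D}\overline{\Hom}_A(\tau Z,\tau Z)$-duality, to an element of $\mathbb{D}\Ext^1_A(Z,N)$ restricted appropriately. More concretely, use the AR formula in the form $\Ext_A^1(Z,N)\cong\mathbb{D}\overline{\Hom}_A(N,\tau Z)$, where the bar means quotienting by maps factoring through injectives. Vanishing of $\Ext^1$ then forces every map $N\to\tau Z$ to factor through an injective. Applying this also to $N\oplus X$-quotients gives that every map from $\operatorname{Gen}X$ into $\tau Z$ factors through an injective module $I$.

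Finally, since $\tau Z$ has no nonzero injective summands, one must show that such a factorization of the inclusion $N\hookrightarrow\tau Z$ forces $N=0$. For this I would use that $\nu P_1$ is the injective envelope layer of $\tau Z$: an embedding $N\to\tau Z$ that factors through an injective $I$ extends to a map $I\to\nu P_1$. Minimality of the presentation then gives $N=0$. If this route is delicate, the fallback is to cite \cite[Proposition 2.4]{ADA} or the Auslander--Smal\o{} result for $Z$ directly, combined with the dimension-shift isomorphism above.
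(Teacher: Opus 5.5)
Your proposal is correct. The paper gives no proof of this lemma; it quotes it from \cite[Corollary 3.5(c)]{MAR}. Your reduction is the natural one: $\tau_nY=\tau(\Omega^{n-1}Y)$ and $\operatorname{Ext}_A^n(Y,-)\cong\operatorname{Ext}_A^1(\Omega^{n-1}Y,-)$ turn the statement into the classical Auslander--Smal\o{} case $n=1$ for $Z=\Omega^{n-1}Y$ (recalled in \cite{ADA}). With your fallback citation the argument is therefore complete.

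Your ($\Rightarrow$) direction is fine as written. One wording slip: $\operatorname{Ext}_A^1(Z,N)$ is a subspace, not a quotient, of $\operatorname{Coker}\operatorname{Hom}_A(p,N)$, as you yourself state a line later.

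If you want ($\Leftarrow$) to be self-contained, the closing step via $\nu P_1$ and ``minimality of the presentation'' is not an actual argument. Use the injective envelope instead:
\begin{itemize}
\item Suppose $\iota:N\hookrightarrow\tau Z$ factors as $N\xrightarrow{a}I\xrightarrow{b}\tau Z$ with $I$ injective.
\item Extend $a$ along $N\subseteq I(N)$ to a map $c:I(N)\to I$.
\item Then $bc$ restricts to the monomorphism $\iota$ on the essential submodule $N$, so $bc$ is itself a monomorphism. Since $I(N)$ is injective, $bc$ splits, and $I(N)$ is a direct summand of $\tau Z$.
\item But $\tau Z$ has no nonzero injective direct summands, so $N=0$ and hence $f=0$.
\end{itemize}
The remark about ``$N\oplus X$-quotients'' is not needed.
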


\section{The $\tau_2$-tilting $B$-modules arising from $\tau$-tilting $A$-modules}
In this section, let \( (A, T, B) \) be a tilting triple. Given a certain $\tau$-tilting $A$-module, we study how to get a $\tau_2$-tilting $B$-module.
\begin{lemma}\label{3.1}
Let \( (A, T, B) \) be a splitting tilting triple. Let $M,M' \in \mathcal{T}(T_A)$ and $N \in \mathcal{F}(T_A)$. Then the following statements hold:
\begin{enumerate}
\item[(1)] \( \operatorname{Hom}_B ( \operatorname{Hom}_A(T,M) , \tau_{B}\operatorname{Hom}_A(T,M') ) \cong \operatorname{Hom}_A(M,\tau_A  M') .\)
\item[(2)] \(\operatorname{Hom}_A(M, \tau_{A,2} M') = 0\) if and only if
\(\operatorname{Hom}_B(\operatorname{Hom}_A(T, M),\tau_{B,2} \operatorname{Hom}_A(T, M')) = 0.\)
\item[(3)]\(\operatorname{Hom}_B(\operatorname{Ext}_A^1(T, N),\tau_{B,2} \operatorname{Ext}_A^1(T, N)) = 0.\)
\item[(4)]\(\operatorname{Hom}_B(\operatorname{Ext}_A^1(T, N), \tau_{B,2} \operatorname{Hom}_A(T, M)) = 0.\)
\item[(5)] \( \operatorname{Hom}_A(M,\tau_A N) = 0 \) if and only if \(\operatorname{Hom}_B(\operatorname{Hom}_A(T, M), \tau_{B,2} \operatorname{Ext}_A^1(T, N)) = 0.\)
\end{enumerate}
\end{lemma}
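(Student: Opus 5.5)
Throughout, write $F=\operatorname{Hom}_A(T,-)$ and $G=\operatorname{Ext}_A^1(T,-)$. The plan is to reduce everything to Ext-vanishing via Lemma \ref{tau-gen} and the Auslander--Reiten formula, and then to transport Ext groups between $A$ and $B$ using Lemma \ref{xt}. The splitting hypothesis enters in two ways. Through Lemma \ref{splitting}(2) it gives $\tau_B F(M')\cong F(\tau_A M')$. Through Lemma \ref{splitting}(4) it gives $\operatorname{id}N\le 1$ for $N\in\mathcal{F}(T)$. It also enters through the splitting of the torsion pair $(\mathcal{X}(T),\mathcal{Y}(T))$ in $\mod B$, which lets us decompose every $B$-module as $Y\oplus X$ with $Y\cong F(L)$, $L\in\mathcal{T}(T)$, and $X\cong G(K)$, $K\in\mathcal{F}(T)$.

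\emph{Item (1).} First check that $\tau_A M'\in\mathcal{T}(T)$. This should follow since $\tau_B F(M')$ lies in $\mathcal{Y}(T)$ by splitting: a $\tau$-translate of a module in $\mathcal{Y}$ has no summand in $\mathcal{X}$, otherwise Lemma \ref{xt}(4) would produce a forbidden map. Alternatively one can use Lemma \ref{splitting}(2) directly in the form $\tau_B F(M')\cong F(\tau_A M')$, and observe that $F(\tau_AM')\cong F({\bf t}\tau_AM')$. Then
\[\operatorname{Hom}_B(FM,\tau_BFM')\cong\operatorname{Hom}_B(FM,F({\bf t}\tau_AM'))\cong\operatorname{Hom}_A(M,{\bf t}\tau_AM')\cong\operatorname{Hom}_A(M,\tau_AM').\]
The middle isomorphism is Lemma \ref{xt}(1) and the last is \eqref{thomtg}.

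\emph{Items (2)--(5).} Use Lemma \ref{tau-gen} with $n=2$, so that $\operatorname{Hom}_B(X,\tau_{B,2}Y)=0$ if and only if $\operatorname{Ext}^2_B(Y,\operatorname{Gen}X)=0$.

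For (3), every module in $\operatorname{Gen}G(N)$ lies in $\mathcal{X}(T)$, since $\mathcal{X}$ is a torsion class. Hence it equals $G(K)$ with $K\in\mathcal{F}(T)$, and $\operatorname{Ext}^2_B(GN,GK)\cong \operatorname{Ext}^2_{D^b(B)}$ between shifts, which is $\operatorname{Ext}^2_A(N,K)=0$ because $\operatorname{id}K\le1$.

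For (4), again every $Z\in\operatorname{Gen}G(N)$ is of the form $G(K)$, and Lemma \ref{xt}(3) gives $\operatorname{Ext}^2_B(FM,GK)\cong\operatorname{Ext}^3_A(M,K)=0$ since $\operatorname{id}K\le1$.

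For (5), $\operatorname{Gen}F(M)$ contains modules in both parts. Write $Z=F(L)\oplus G(K)$ with $L\in\mathcal{T}$ and $K\in\mathcal{F}$. Then $\operatorname{Ext}^2_B(GN,FL)\cong\operatorname{Ext}^1_A(N,L)$ by Lemma \ref{xt}(4), while $\operatorname{Ext}^2_B(GN,GK)=0$ as in (3). So the vanishing is equivalent to $\operatorname{Ext}^1_A(N,L)=0$ for all $F(L)$ summands of modules in $\operatorname{Gen}FM$. I then need to identify these $L$ with the $\operatorname{Gen}M$ (in $\mathcal{T}$): an epimorphism $FM^{m}\to F(L)$ comes from $M^m\to L$ with $F$ of the cokernel being zero, so the cokernel is in $\mathcal{T}\cap\mathcal{F}=0$. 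Conversely, $F$ preserves epimorphisms within $\mathcal{T}$ whose kernel lies in $\mathcal{T}$. With this identification and Lemma \ref{tau-gen} for $n=1$, the condition becomes $\operatorname{Hom}_A(M,\tau_AN)=0$. Item (2) is the same computation with $FM'$ in place of $GN$: the $F$-part uses $\operatorname{Ext}^2_B(FM',FL)\cong\operatorname{Ext}^2_A(M',L)$, and the $G$-part uses Lemma \ref{xt}(3), whose contribution $\operatorname{Ext}^3_A$ vanishes.

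The main obstacle is precisely this identification of $\operatorname{Gen}$ under $F$. The difficulty is that $\operatorname{Gen}_A M$ may contain modules outside $\mathcal{T}(T)$, and kernels of epimorphisms may leave $\mathcal{T}$. Moreover, Lemma \ref{xt}(1) only covers $i=0,1$, whereas (2) needs $\operatorname{Ext}^2$; I would get this via the derived equivalence \eqref{RHom} directly, as in the proof of Lemma \ref{xt}. For the Gen issue, I would try to show the Ext condition only needs testing on $L\in\mathcal{T}$. Since $\mathcal{T}$ is closed under quotients, $\operatorname{Gen}M\subseteq\mathcal{T}$ automatically because $M\in\mathcal{T}$, so this part should be fine. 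The remaining point is that $F$ sends epimorphisms $M^m\to L$ in $\mathcal{T}$ to epimorphisms. I would handle this with the long exact sequence, using that the kernel's $\operatorname{Ext}^1(T,-)$ may be nonzero. If it fails, I would replace the epimorphism by a right $\operatorname{add}M$-approximation and argue via the torsion part.
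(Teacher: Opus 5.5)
\paragraph{Where the proposal falls short.} Your arguments for (1), (3), (4) and the ``only if'' halves of (2) and (5) are sound; for (1) this means your alternative chain through ${\bf t}(\tau_AM')$, which is the paper's argument.

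The genuine gap is in the ``if'' halves of (2) and (5), i.e.\ deducing vanishing over $A$ from vanishing over $B$. From $\operatorname{Ext}^2_B(U,\operatorname{Gen}F(M))=0$, with $U=F(M')$ resp.\ $U=G(N)$, you must get $\operatorname{Ext}^2_A(M',E)=0$ resp.\ $\operatorname{Ext}^1_A(N,E)=0$ for every $E\in\operatorname{Gen}M$. Your plan is to show $F(E)\in\operatorname{Gen}F(M)$ because $F$ takes an epimorphism $M^r\to E$ to an epimorphism. This is false in general. The cokernel of $F(M^r)\to F(E)$ is $\operatorname{Ext}^1_A(T,K)\cong\operatorname{Ext}^1_A(T,{\bf f}K)$ for the kernel $K$, and $K$ cannot in general be chosen in $\mathcal{T}(T)$.

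For example, let $A$ be hereditary of type $A_2$ (so the triple is splitting), with simple projective $S$, projective-injective $P$ and simple injective $S'$, and let $T=P\oplus S'$. Then $\mathcal{F}(T)=\operatorname{add}S$. Every epimorphism $P^r\to S'$ has kernel isomorphic to $S\oplus P^{r-1}\notin\mathcal{T}(T)$. Moreover, $F(P)$ is one-dimensional, hence simple, while $F(S')$ is indecomposable of dimension $2$. So $S'\in\operatorname{Gen}P$ but $F(S')\notin\operatorname{Gen}F(P)$. Since $P\to S'$ is already a minimal right $\operatorname{add}P$-approximation, your fallback does not repair this.

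The same example refutes your preliminary claim in (1) that $\tau_AM'\in\mathcal{T}(T)$: here $\tau_AS'=S\in\mathcal{F}(T)$. Simply drop that claim.

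\paragraph{The missing idea.} You never need $F(E)\in\operatorname{Gen}F(M)$. From $0\to K\to M^r\to E\to0$ and $\operatorname{Ext}^1_A(T,M)=0$ you obtain a short exact sequence $0\to C\to F(E)\to\operatorname{Ext}^1_A(T,K)\to0$ with $C\in\operatorname{Gen}F(M)$ and $\operatorname{Ext}^1_A(T,K)\in\mathcal{X}(T)$. You have already shown $\operatorname{Ext}^2_B(U,\mathcal{X}(T))=0$ for both choices of $U$; this is where $\operatorname{id}_AK'\le1$ for $K'\in\mathcal{F}(T)$ enters. The long exact sequence then gives $\operatorname{Ext}^2_B(U,F(E))=0$. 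This transfers to $\operatorname{Ext}^2_A(M',E)=0$ by the derived equivalence, resp.\ to $\operatorname{Ext}^1_A(N,E)=0$ by Lemma~\ref{xt}(4).

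This is exactly the paper's argument for the converse direction of (5). For (2) the paper avoids the $\operatorname{Gen}$ comparison altogether. It takes $0\to M_1\to T_0\to M'\to0$ with $T_0\in\operatorname{add}T$ and $M_1\in\mathcal{T}(T)$, so that $\tau_{B,2}F(M')\cong\tau_BF(M_1)$, and applies (1). It then uses $\operatorname{Ext}^1_A(M_1,Z)\cong\operatorname{Ext}^2_A(M',Z)$ for $Z\in\operatorname{Gen}M\subseteq\mathcal{T}(T)$, together with Lemma~\ref{tau-gen} on the $A$-side only.
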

\begin{proof}
$(1)$~By Lemma \ref{splitting}, \eqref{thomtg} and Lemma \ref{xt}, we have the following isomorphisms
\begin{equation*}
\begin{split}
 \operatorname{Hom}_B ( \operatorname{Hom}_A(T,M) , \tau_{B}\operatorname{Hom}_A(T,M') )
& \cong \operatorname{Hom}_B ( \operatorname{Hom}_A(T,M) , \operatorname{Hom}_A(T,\tau_{A} M') ) \\
& \cong \operatorname{Hom}_B ( \operatorname{Hom}_A(T,M) , \operatorname{Hom}_A(T, {\bf t}(\tau_{A} M')) ) \\
& \cong \operatorname{Hom}_A(M, {\bf t}(\tau_{A} M')) \\
& \cong \operatorname{Hom}_A(M, \tau_{A} M').
\end{split}
\end{equation*}

$(2)$ By \cite[Theorem 2.5(c) and Corollary 2.6]{ASS}, there exists a
short exact sequence
\begin{equation}\label{star1}
0\longrightarrow M_1\longrightarrow T_0\longrightarrow M'
\longrightarrow0
\end{equation}
with $T_0\in\operatorname{add}T$ and
$M_1\in\mathcal{T}(T)$. Applying $\operatorname{Hom}_A(T,-)$ to
\eqref{star1} gives an exact sequence
\[
0\longrightarrow\operatorname{Hom}_A(T,M_1)
\longrightarrow\operatorname{Hom}_A(T,T_0)
\longrightarrow\operatorname{Hom}_A(T,M')
\longrightarrow0,
\]
where $\operatorname{Hom}_A(T,T_0)$ is projective over $B$. Thus, we obtain that there exists a projective
$B$-module $P$ such that
$\operatorname{Hom}_A(T,M_1)
\cong
\Omega_B\operatorname{Hom}_A(T,M')\oplus P.$
Since the Auslander--Reiten translation vanishes on projective modules, we have
\[
\tau_{B,2}\operatorname{Hom}_A(T,M')
=
\tau_B\Omega_B\operatorname{Hom}_A(T,M')
\cong
\tau_B\operatorname{Hom}_A(T,M_1).
\]
Thus, by $(1)$ we get
\begin{align*}
\operatorname{Hom}_B
 \bigl(\operatorname{Hom}_A(T,M),
       \tau_{B,2}\operatorname{Hom}_A(T,M')\bigr)
&\cong
\operatorname{Hom}_B
 \bigl(\operatorname{Hom}_A(T,M),
       \tau_B\operatorname{Hom}_A(T,M_1)\bigr)\\
&\cong
\operatorname{Hom}_A(M,\tau_A M_1).
\end{align*}
For any $Z\in\operatorname{Gen}M$, we have
$Z\in\mathcal{T}(T)$, since
$\mathcal{T}(T)$ is closed under quotients. Applying $\operatorname{Hom}_A(-,Z)$ to
\eqref{star1} yields the exact sequence
\[
\operatorname{Ext}_A^1(T_0,Z)
\longrightarrow
\operatorname{Ext}_A^1(M_1,Z)
\longrightarrow
\operatorname{Ext}_A^2(M',Z)
\longrightarrow
\operatorname{Ext}_A^2(T_0,Z).
\]
Note that $\operatorname{Ext}_A^1(T_0,Z)=0$, since
$T_0\in\operatorname{add}T$ and $Z\in\mathcal{T}(T)$;
$\operatorname{Ext}_A^2(T_0,Z)=0$, since
$\operatorname{pd}_A T_0\leq1$. Thus,
$\operatorname{Ext}_A^1(M_1,Z)
\cong
\operatorname{Ext}_A^2(M',Z)$ for any $Z\in\operatorname{Gen}M.$
It follows that
\[
\operatorname{Ext}_A^1(M_1,\operatorname{Gen}M)=0
\quad\Longleftrightarrow\quad
\operatorname{Ext}_A^2(M',\operatorname{Gen}M)=0.
\]
Hence, applying Lemma \ref{tau-gen}, we obtain
\begin{align*}
\operatorname{Hom}_B
 \bigl(\operatorname{Hom}_A(T,M),
       \tau_{B,2}\operatorname{Hom}_A(T,M')\bigr)=0
&\quad\Longleftrightarrow
\operatorname{Hom}_A(M,\tau_A M_1)=0\\
&\quad\Longleftrightarrow
\operatorname{Ext}_A^1(M_1,\operatorname{Gen}M)=0\\
&\quad\Longleftrightarrow
\operatorname{Ext}_A^2(M',\operatorname{Gen}M)=0\\
&\quad\Longleftrightarrow
\operatorname{Hom}_A(M,\tau_{A,2}M')=0
\end{align*}
and finish the proof of $(2)$.

\par\medskip\noindent
$(3)$ By Lemma \ref{tau-gen}, it suffices to show that
\begin{equation}\label{xyds}
\operatorname{Ext}_B^2(\operatorname{Ext}_A^1(T, N), \operatorname{Gen}\operatorname{Ext}_A^1(T, N)) = 0.\end{equation}
Since $T$ is splitting, by Lemma \ref{splitting}, we have
$\operatorname{id}_A L\leq1$ for any $L\in\mathcal F(T)$.
For any $X\in\mathcal X(T)$, Theorem \ref{BB} gives
$L\in\mathcal F(T)$ such that
$X\cong\operatorname{Ext}_A^1(T,L)$. Thus, by \cite[VI, Exercise 20(c)]{ASS}, we have
\(\operatorname{id}_B X\leq\operatorname{id}_A L\leq1\). Since $\mathcal{X}(T)$ is closed under quotients, we conclude that
$\operatorname{Gen}\operatorname{Ext}_A^1(T, N) \subset \mathcal{X}(T)$, and then get the proof of \eqref{xyds}.

\par\medskip\noindent
$(4)$ Take any exact sequence
\begin{equation}\label{exsq34}
0\longrightarrow\widetilde M_1
\longrightarrow\widetilde T_0
\longrightarrow M
\longrightarrow0\end{equation}
with $\widetilde T_0\in\operatorname{add}T$ and
$\widetilde M_1\in\mathcal{T}(T)$. Applying
$\operatorname{Hom}_A(T,-)$ to \eqref{exsq34}, we obtain
\[
\operatorname{Hom}_A(T,\widetilde M_1)
\cong
\Omega_B\operatorname{Hom}_A(T,M)\oplus\widetilde P
\] for some projective $B$-module
$\widetilde P$.
Thus, using Lemma \ref{splitting} and \eqref{thomtg}, we get
\begin{align*}
\tau_{B,2}\operatorname{Hom}_A(T,M)
\cong
\tau_B\operatorname{Hom}_A(T,\widetilde M_1)
\cong
\operatorname{Hom}_A(T,\tau_A\widetilde M_1)
\cong
\operatorname{Hom}_A
 \bigl(T,{\bf t}(\tau_A\widetilde M_1)\bigr)
\in\mathcal{Y}(T).
\end{align*}
On the other hand,
$\operatorname{Ext}_A^1(T,N)\in\mathcal{X}(T)$. Since
$\operatorname{Hom}_B(\mathcal{X}(T),\mathcal{Y}(T))=0$, we conclude that
\[
\operatorname{Hom}_B
 \bigl(\operatorname{Ext}_A^1(T,N),
       \tau_{B,2}\operatorname{Hom}_A(T,M)\bigr)=0
\]
and finish the proof of $(4)$.

\par\medskip\noindent
$(5)$ Set $U:=\operatorname{Ext}_A^1(T,N)$ and
$V:=\operatorname{Hom}_A(T,M)$. Applying Lemma \ref{tau-gen}, we get
\[
\operatorname{Hom}_A(M,\tau_A N)=0
\quad\Longleftrightarrow\quad
\operatorname{Ext}_A^1(N,\operatorname{Gen}M)=0
\]
and
\[
\operatorname{Hom}_B(V,\tau_{B,2}U)=0
\quad\Longleftrightarrow\quad
\operatorname{Ext}_B^2(U,\operatorname{Gen}V)=0.
\]
Now it  suffices to prove
\[
\operatorname{Ext}_A^1(N,\operatorname{Gen}M)=0
\quad\Longleftrightarrow\quad
\operatorname{Ext}_B^2(U,\operatorname{Gen}V)=0.
\]

For the only-if direction, assume that
$\operatorname{Ext}_A^1(N,E)=0$ for any
$E\in\operatorname{Gen}M$, and take any
$W\in\operatorname{Gen}V$. Since the torsion pair
$(\mathcal{X}(T),\mathcal{Y}(T))$ is splitting, there are
$W_X\in\mathcal{X}(T)$ and $W_Y\in\mathcal{Y}(T)$ such that
$W\cong W_X\oplus W_Y$. Since $\operatorname{id}_B X\leq1$ for any $X\in\mathcal{X}(T)$,
we have $\operatorname{Ext}_B^2(U,W_X)=0$. By Theorem \ref{BB}, there exists $E\in\mathcal{T}(T)$ such that
$W_Y\cong\operatorname{Hom}_A(T,E)$ and $E\cong W_Y\otimes_B T$.
Since $W_Y$ is a direct summand of $W\in\operatorname{Gen}V$,
there is an epimorphism
$V^{\oplus r}\twoheadrightarrow W_Y$ for some positive integer $r$. Since $-\otimes_B T$ is right exact, the
Brenner--Butler counit isomorphisms give an epimorphism
\[\xymatrix{M^{\oplus r}
\cong
V^{\oplus r}\otimes_B T
\ar@{->>}[r]&
W_Y\otimes_B T
\cong E.}
\]
Thus, we get $E\in\operatorname{Gen}M$. By Lemma \ref{xt}(4), we obtain
\[
\operatorname{Ext}_B^2(U,W_Y)
\cong
\operatorname{Ext}_B^2
 \bigl(\operatorname{Ext}_A^1(T,N),
       \operatorname{Hom}_A(T,E)\bigr)
\cong
\operatorname{Ext}_A^1(N,E)
=0.
\]
Hence, we get
\[
\operatorname{Ext}_B^2(U,W)
\cong
\operatorname{Ext}_B^2(U,W_X)
\oplus
\operatorname{Ext}_B^2(U,W_Y)
=0
\]
and conclude that
$\operatorname{Ext}_B^2(U,\operatorname{Gen}V)=0$.

Conversely, assume that
$\operatorname{Ext}_B^2(U,W)=0$ for any
$W\in\operatorname{Gen}V$ and take any
$E\in\operatorname{Gen}M$. Since
$\operatorname{Gen}M\subseteq\mathcal{T}(T)$, we have
$E\in\mathcal{T}(T)$. Choose an epimorphism
$f:M^{\oplus r}\twoheadrightarrow E$ and set $K:=\operatorname{Ker}f$. Applying
$\operatorname{Hom}_A(T,-)$ to the exact sequence
\[
0\longrightarrow K\longrightarrow M^{\oplus r}
\xrightarrow{\,f\,}E\longrightarrow0,
\]
we get an exact sequence
\[
0\longrightarrow\operatorname{Hom}_A(T,K)
\longrightarrow V^{\oplus r}
\xrightarrow{\operatorname{Hom}_A(T,f)}
\operatorname{Hom}_A(T,E)
\longrightarrow\operatorname{Ext}_A^1(T,K)
\longrightarrow0,
\]
since $\operatorname{Ext}_A^1(T,M^{\oplus r})=0$. Set
$C:=\operatorname{Im}\operatorname{Hom}_A(T,f)$ and
$Q:=\operatorname{Ext}_A^1(T,K)$. Then $C\in\operatorname{Gen}V$ and there is a short exact sequence
\begin{equation}\label{cqzhl}
0\longrightarrow C
\longrightarrow\operatorname{Hom}_A(T,E)
\longrightarrow Q
\longrightarrow0.\end{equation}
Consider the torsion decomposition of $K$
\begin{equation}\label{xydzhl}
0\longrightarrow{\bf t}K
\longrightarrow K
\longrightarrow{\bf f}K
\longrightarrow0.
\end{equation}
Since $\mathbf{t}K\in\mathcal{T}(T)$,
$\mathbf{f}K\in\mathcal{F}(T)$, and $\operatorname{pd}_A T\leq1$,
by applying $\operatorname{Hom}_A(T,-)$ to \eqref{xydzhl}, we get
\[
Q=\operatorname{Ext}_A^1(T,K)
\cong
\operatorname{Ext}_A^1(T,{\bf f}K)
\in\mathcal{X}(T).
\]
Thus, $\operatorname{id}_B Q\leq1$ and then
$\operatorname{Ext}_B^2(U,Q)=0$. Note that $\operatorname{Ext}_B^2(U,C)=0$, since $C\in\operatorname{Gen}V$. Applying
$\operatorname{Hom}_B(U,-)$ to \eqref{cqzhl},
we obtain
\(\operatorname{Ext}_B^2
 \bigl(U,\operatorname{Hom}_A(T,E)\bigr)=0\).
By Lemma \ref{xt}(4), we get
\[
\operatorname{Ext}_A^1(N,E)
\cong
\operatorname{Ext}_B^2
 \bigl(\operatorname{Ext}_A^1(T,N),
       \operatorname{Hom}_A(T,E)\bigr)
=0
\]
and conclude that
$\operatorname{Ext}_A^1(N,\operatorname{Gen}M)=0$. Hence, we finish the proof of $(5)$.
\end{proof}

For any $M,N\in\mod A$, define
\(\operatorname{Tr}_MN :=
\sum\limits_{f\in{\rm Hom}_A(N,M)}\operatorname{Im}f\), and let
\begin{equation}\label{star2}
\xymatrix{0 \to \operatorname{Tr}_M N \ar[r]^-\lambda& M \ar[r]^-\pi& M / {\operatorname{Tr}_MN} \to 0}
\end{equation}
be the short exact sequence such that $\lambda$ and $\pi$ are the natural embedding and projection, respectively. We recall that a module is called {\em basic} if its indecomposable direct summands are pairwise non-isomorphic.

\begin{lemma}\label{3.2}
Let \(M\oplus N\) be a basic \(\tau\)-tilting $A$-module. Then the following statements hold:

$(1)$ \( (\operatorname{Gen} N, \mathcal{F}(N)) \) is a torsion pair, and the short exact sequence (\ref{star2}) can be viewed as a decomposition sequence of \( M \) with respect to this torsion pair.

$(2)$ The following equations hold:
\begin{enumerate}
\item[(i)] \( \operatorname{Ext}_A^1(N,\operatorname{Tr}_M N) = 0 \). {\rm(ii)}~\( \operatorname{Ext}_A^1(N, M / {\operatorname{Tr}_M N}) = 0 \). {\rm(iii)}~\( \operatorname{Ext}_A^1(M, M / {\operatorname{Tr}_M N}) = 0 \).
\item[(iv)] \( \operatorname{Ext}_A^1(M,\operatorname{Tr}_MN) = 0 \). {\rm(v)}~\( \operatorname{Hom}_A(\operatorname{Tr}_MN, M / {\operatorname{Tr}_M N}) = 0 \).
\item[(vi)] \( \operatorname{Hom}_A(N, M /{\operatorname{Tr}_MN}) = 0 \). {\rm(vii)}~\( \operatorname{Ext}_A^1(M / {\operatorname{Tr}_MN}, M /{\operatorname{Tr}_MN}) = 0 \).
\item[(viii)] If \( \operatorname{Hom}_A(M / {\operatorname{Tr}_MN}, \tau_{A,2}(M / \operatorname{Tr}_M N)) = 0 \), then
\(\operatorname{Ext}_A^1(\operatorname{Tr}_MN, M /{\operatorname{Tr}_M N}) = 0 \).
\end{enumerate}

$(3)$ Write $M=\bigoplus_{i=1}^nM_i$, where the $M_i$ are
pairwise non-isomorphic and indecomposable, and set
$X_i:=M_i/\operatorname{Tr}_{M_i}N$. Then $X_i=0$ if and only if $M_i\in\operatorname{Gen}N$. Every nonzero $X_i$ is indecomposable, and the nonzero $X_i$ are pairwise non-isomorphic. Consequently,
\[
\left|M/\operatorname{Tr}_MN\right|
=\#\{i\mid M_i\notin\operatorname{Gen}N\}.
\]
In particular, if no nonzero indecomposable direct summand of $M$ lies in $\operatorname{Gen}N$, then $M/\operatorname{Tr}_MN$ is basic and
$|M/\operatorname{Tr}_MN|=|M|$.
\end{lemma}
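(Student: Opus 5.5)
The plan is to reduce everything to the standard consequences of $\tau$-rigidity given by Lemma \ref{tau-gen}. Since $M\oplus N$ is $\tau$-rigid, we have $\operatorname{Hom}_A(M\oplus N,\tau(M\oplus N))=0$. Applying Lemma \ref{tau-gen} with $n=1$ then gives
\[
\operatorname{Ext}_A^1\bigl(M\oplus N,\operatorname{Gen}(M\oplus N)\bigr)=0 .
\]
For (1), I will use that $\operatorname{Gen}N$ is closed under quotients and, because $\operatorname{Ext}^1_A(N,\operatorname{Gen}N)=0$, also under extensions; this is the standard \cite{ADA} argument. Hence $(\operatorname{Gen}N,\mathcal{F}(N))$ is a torsion pair, where $\mathcal{F}(N)=\{X\mid \operatorname{Hom}_A(N,X)=0\}$. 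Next, $\operatorname{Tr}_MN$ is a finite sum of images of maps $N\to M$, so it is the image of a map $N^{\oplus r}\to M$ and lies in $\operatorname{Gen}N$. To see that $M/\operatorname{Tr}_MN\in\mathcal{F}(N)$, apply $\operatorname{Hom}_A(N,-)$ to \eqref{star2}. Since $\operatorname{Ext}^1_A(N,\operatorname{Tr}_MN)=0$, every map $N\to M/\operatorname{Tr}_MN$ lifts to a map $N\to M$. The image of such a lift lies in $\operatorname{Tr}_MN$, so the original map is zero. By the uniqueness in Remark 2.5(2), \eqref{star2} is the torsion decomposition of $M$. The same argument proves (i) and (vi).

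For (2), the remaining items follow from the observations $\operatorname{Tr}_MN\in\operatorname{Gen}N$ and $M/\operatorname{Tr}_MN\in\operatorname{Gen}M$, together with the displayed vanishing above; this gives (ii), (iii) and (iv). Item (v) is $\operatorname{Hom}(\text{torsion},\text{torsion-free})=0$. For (vii), apply $\operatorname{Hom}_A(-,X)$ with $X=M/\operatorname{Tr}_MN$ to \eqref{star2}. This gives an exact sequence $\operatorname{Hom}_A(\operatorname{Tr}_MN,X)\to\operatorname{Ext}^1_A(X,X)\to\operatorname{Ext}^1_A(M,X)$, whose outer terms vanish by (v) and (iii). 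For (viii), the same long exact sequence continues as $\operatorname{Ext}^1_A(M,X)\to\operatorname{Ext}^1_A(\operatorname{Tr}_MN,X)\to\operatorname{Ext}^2_A(X,X)$. The left term is $0$ by (iii). The right term is $0$ by Lemma \ref{tau-gen} with $n=2$, using the hypothesis and $X\in\operatorname{Gen}X$.

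For (3), $\operatorname{Tr}_MN=\bigoplus_i\operatorname{Tr}_{M_i}N$, so $M/\operatorname{Tr}_MN\cong\bigoplus_iX_i$. Moreover, $X_i=0$ iff $M_i=\operatorname{Tr}_{M_i}N$ iff $M_i\in\operatorname{Gen}N$. The main step is to compare $\operatorname{Hom}_A(M_i,M_j)$ with $\operatorname{Hom}_A(X_i,X_j)$:
\begin{itemize}
\item Since $X_j\in\mathcal{F}(N)$, Remark 2.5(4) gives $\operatorname{Hom}_A(X_i,X_j)\cong\operatorname{Hom}_A(M_i,X_j)$.
\item By (iv), applied to the summands, $\operatorname{Ext}^1_A(M_i,\operatorname{Tr}_{M_j}N)=0$, so $\operatorname{Hom}_A(M_i,M_j)\to\operatorname{Hom}_A(M_i,X_j)$ is surjective.
\end{itemize}
Hence the functor $\mathbf{f}$ induces a surjection $\operatorname{Hom}_A(M_i,M_j)\to\operatorname{Hom}_A(X_i,X_j)$ that is compatible with composition. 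In particular, $\operatorname{End}X_i$ is a quotient of the local ring $\operatorname{End}M_i$. So it is local when nonzero, and each nonzero $X_i$ is indecomposable.

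The main obstacle is showing that the nonzero $X_i$ are pairwise non-isomorphic. Suppose $X_i\cong X_j\neq0$ with $i\neq j$, via mutually inverse maps $\varphi,\psi$. I will lift them to $u:M_i\to M_j$ and $v:M_j\to M_i$. Then $vu\in\operatorname{End}M_i$ maps to $\psi\varphi=1_{X_i}$. However, $M_i\not\cong M_j$ forces $vu\in\operatorname{rad}\operatorname{End}M_i$, which is nilpotent since the algebra is finite-dimensional. Its image is therefore nilpotent, and so cannot be the identity of the nonzero module $X_i$, a contradiction. The count $|M/\operatorname{Tr}_MN|=\#\{i\mid M_i\notin\operatorname{Gen}N\}$ and the final "in particular" then follow immediately.
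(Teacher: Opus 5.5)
Your proof is correct. For (1), (2) and the indecomposability claim in (3) it is essentially the paper's argument. The paper cites [ASS, VI Lemma 1.9(a)] for the torsion pair where you sketch the extension-closure argument, and it gets the surjection $\operatorname{End}_A(M_i)\to\operatorname{End}_A(X_i)$ from (iv) and (v) exactly as you do via $\mathbf{f}$.

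The real difference is in showing that the nonzero $X_i$ are pairwise non-isomorphic. The paper lifts an isomorphism $h\colon X_i\to X_j$ to a map $g\colon M_i\to M_j$. From the resulting morphism of short exact sequences it extracts an exact sequence $0\to\operatorname{Tr}_{M_i}N\to M_i\oplus\operatorname{Tr}_{M_j}N\to M_j\to0$. It splits this sequence using $\operatorname{Ext}_A^1(M_j,\operatorname{Tr}_{M_i}N)=0$ from (iv), and then uses Krull--Schmidt plus a dimension count to force $M_i=\operatorname{Tr}_{M_i}N$.

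You instead lift both $h$ and $h^{-1}$, and observe that $vu$ is a non-unit of the local ring $\operatorname{End}_A(M_i)$ whose image under the ring homomorphism induced by $\mathbf{f}$ is $1_{X_i}\neq0$. Your route is shorter and purely formal. It only uses fullness of $\mathbf{f}$ on $\operatorname{add}M$, which you had already established. In fact it shows that a full additive functor on a Krull--Schmidt category never identifies two non-isomorphic indecomposables with nonzero images. Nilpotency of the radical is not even needed, since $1-vu$ is a unit mapping to $0$.

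The paper's version is more hands-on: it uses the $\operatorname{Ext}$-vanishing (iv) a second time on an explicit splitting, rather than routing everything through the functor $\mathbf{f}$.
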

\begin{proof}
$(1)$ Since \( M \oplus N \) is \( \tau \)-tilting, \(\operatorname{Hom}_A(N, \tau N) = 0 \).
Thus, by Lemma \ref{tau-gen}, we have
\(\operatorname{Ext}_A^1(N, \operatorname{Gen} N) = 0.\)
By \cite[VI Lemma 1.9(a)]{ASS}, \( (\operatorname{Gen} N, \mathcal{F}(N)) \) is a torsion pair.

By definition, \( \operatorname{Tr}_M N \in \operatorname{Gen} N \). Applying \( \operatorname{Hom}_A(N, -) \) to (\ref{star2}), we obtain the exact sequence
\[0\to\operatorname{Hom}_A(N, \operatorname{Tr}_M N) \stackrel{a}{\rightarrow} \operatorname{Hom}_A(N, M) \to \operatorname{Hom}_A(N, M / \operatorname{Tr}_M N) \to \operatorname{Ext}_A^1(N, \operatorname{Tr}_M N).\]
By the definition of \( \operatorname{Tr}_M N \), we get the map $a$ is surjective, and then an isomorphism. Note that
\(\operatorname{Ext}_A^1(N, \operatorname{Tr}_M N) \subset \operatorname{Ext}_A^1(N, \operatorname{Gen} N) = 0.\)
Thus,
\(\operatorname{Hom}_A(N, M / \operatorname{Tr}_M N) = 0,\)
i.e.
\(M / \operatorname{Tr}_M N \in \mathcal{F}(N)\).
Hence, we finish the proof.

$(2)$ By definition, \( \operatorname{Tr}_M N \in \operatorname{Gen} N \) and \( M / \operatorname{Tr}_M N \in \operatorname{Gen} M \).
Since \( M \oplus N \) is \( \tau \)-tilting, we have \(\operatorname{Hom}_A( M \oplus N, \tau  M \oplus \tau  N) = 0 \). By Lemma \ref{tau-gen},
\(\operatorname{Ext}_A^1(M \oplus N, \operatorname{Gen} M \oplus \operatorname{Gen} N) = 0.\)
Thus, the equations (i), (ii), (iii) and (iv) hold.

Noting that \( N, \operatorname{Tr}_M N \in \operatorname{Gen} N \) and \( M / \operatorname{Tr}_M N \in \mathcal{F}(N) \), we obtain the equations (v) and (vi) by (1).
Applying \( \operatorname{Hom}(-, M / \operatorname{Tr}_M N) \) to (\ref{star2}), we get the exact sequence
\begin{flalign*}
&\operatorname{Hom}_A(\operatorname{Tr}_M N, M / \operatorname{Tr}_M N) \to \operatorname{Ext}_A^1(M / \operatorname{Tr}_M N, M / \operatorname{Tr}_M N) \to \operatorname{Ext}_A^1(M, M / \operatorname{Tr}_M N)\\& \to \operatorname{Ext}_A^1(\operatorname{Tr}_M N, M / \operatorname{Tr}_M N)
\to \operatorname{Ext}_A^2(M / \operatorname{Tr}_M N, M / \operatorname{Tr}_M N).
\end{flalign*}
By (v) and (iii), we know the first and third terms are zero. Thus the second term is zero, i.e. the equation (vii) holds.

If \( \operatorname{Hom}_A(M / \operatorname{Tr}_M N, \tau_{A,2}(M / \operatorname{Tr}_M N)) = 0 \), then by Lemma \ref{tau-gen}, we have
$$\operatorname{Ext}_A^2(M / \operatorname{Tr}_M N, M / \operatorname{Tr}_M N) = 0.$$
Hence, we get the equation (viii), and finish the proof.

$(3)$ Let
\(M = \bigoplus_{i=1}^n M_i,\)
where each \( M_i \) is indecomposable and \( M_i \ncong M_j \) for any \( i \neq j \).
It is easy to see
$\operatorname{Tr}_M N = \bigoplus_{i=1}^n \operatorname{Tr}_i$ and $M / \operatorname{Tr}_M N = \bigoplus_{i=1}^n M_i / \operatorname{Tr}_i,$
where \( \operatorname{Tr}_i = \operatorname{Tr}_{M_i} N \).

By definition, $M_i\in\operatorname{Gen}N$ if and only if $\operatorname{Tr}_i=M_i$, or equivalently, $M_i/\operatorname{Tr}_i=0$. Suppose that there is an index $i$ such that $M_i\notin\operatorname{Gen}N$. We first prove that $M_i/\operatorname{Tr}_i$ is indecomposable and nonzero.
Consider the short exact sequence
\begin{equation}\label{trdzhl}
\xymatrix{0 \ar[r]& \operatorname{Tr}_i \ar[r]& M_i \ar[r]^-{\pi_i}& M_i / \operatorname{Tr}_i \ar[r]& 0.}\end{equation}
Applying \( \operatorname{Hom}_A(M_i, -) \) to \eqref{trdzhl}, we get the exact sequence
\begin{equation}\label{trizhl}
\xymatrix{\operatorname{Hom}_A(M_i, M_i) \ar[r]^-{\pi_{i,\ast}}& \operatorname{Hom}_A(M_i, M_i / \operatorname{Tr}_i) \ar[r]& \operatorname{Ext}_A^1(M_i, \operatorname{Tr}_i).}\end{equation}
By (iv), the third term in \eqref{trizhl} is zero, so ${\pi_{i,\ast}}$ is surjective.
Applying \( \operatorname{Hom}_A(-, M_i / \operatorname{Tr}_i) \) to \eqref{trdzhl}, we get the exact sequence
\begin{equation}\label{trizhl2}
\xymatrix@C=1.6pc{0 \to \operatorname{Hom}_A(M_i / \operatorname{Tr}_i, M_i / \operatorname{Tr}_i) \ar[r]^-{\pi_i^\ast}& \operatorname{Hom}_A(M_i, M_i / \operatorname{Tr}_i) \ar[r]& \operatorname{Hom}_A(\operatorname{Tr}_i, M_i / \operatorname{Tr}_i).}
\end{equation}
By (v), the fourth term in \eqref{trizhl2} is zero, so $\pi_i^\ast$ is an isomorphism.
By \eqref{trizhl} and \eqref{trizhl2}, there exists a surjection
\begin{equation}\label{piman}
p_i:\operatorname{End}_A(M_i) \longrightarrow \operatorname{End}_A (M_i / \operatorname{Tr}_i)\end{equation}
such that there exists a morphism $y:M_i\to M_i$ satisfying $x\pi_i=\pi_iy$ for any morphism $x:M_i / \operatorname{Tr}_i \rightarrow M_i / \operatorname{Tr}_i$.
Since \( M_i \) is indecomposable, \( \operatorname{End}_A(M_i)\) is local. It follows that \( \operatorname{End}_A(M_i / \operatorname{Tr}_i) \) is also local, and thus \( M_i / \operatorname{Tr}_i \) is indecomposable.
The choice of $i$ gives $M_i\neq\operatorname{Tr}_i$, and thus $M_i/\operatorname{Tr}_i\neq0$.

We next prove that the nonzero quotients are pairwise non-isomorphic.
Suppose that there exists an isomorphism $h:M_i / \operatorname{Tr}_i \rightarrow M_j / \operatorname{Tr}_j$ for some $i\neq j$ such that $M_i,M_j\notin\operatorname{Gen}N$. Using similar arguments for getting the surjection $p_i$ in \eqref{piman}, we obtain a surjection
\[
\operatorname{Hom}_A(M_i, M_j) \longrightarrow \operatorname{Hom}_A(M_i / \operatorname{Tr}_i, M_j / \operatorname{Tr}_j)
\]
such that there exists a morphism $y:M_i\to M_j$ satisfying $x\pi_i=\pi_jy$ for any morphism $x:M_i / \operatorname{Tr}_i \rightarrow M_j / \operatorname{Tr}_j$.
In particular, for the morphism $h$ there is a morphism $g:M_i\to M_j$ satisfying $h\pi_i=\pi_jg$.
Thus, by the universal property of kernels, we get the following commutative diagram of exact sequences
$$\xymatrix{0\ar[r]& {\rm Tr}_i\ar[r]\ar@{-->}[d]^-f&M_i\ar[r]^-{\pi_i}\ar@{-->}[d]^-g&M_i/{{\rm Tr}_i}\ar[r]\ar[d]^-h&0\\
0\ar[r]& {\rm Tr}_j\ar[r]&M_j\ar[r]^-{\pi_j}&M_j/{{\rm Tr}_j}\ar[r]&0.}$$
Since $h$ is an isomorphism, we obtain a short exact sequence
\begin{equation}\label{mjtri}
\xymatrix{0 \ar[r]& \operatorname{Tr}_i \ar[r]& M_i \oplus \operatorname{Tr}_j \ar[r]& M_j \ar[r]& 0.}
\end{equation}
By (iv), we have $\operatorname{Ext}_A^1(M_j, \operatorname{Tr}_i) = 0$, so the exact sequence \eqref{mjtri} splits. It follows that
\(M_i \oplus \operatorname{Tr}_j \cong M_j \oplus \operatorname{Tr}_i.\) Since $M_i\ncong M_j$, we conclude \( M_i \) is a direct summand of
\( \operatorname{Tr}_i \), and then
\(\dim_kM_i \leq \dim_k\operatorname{Tr}_i.\)
Since \( \operatorname{Tr}_i \) is a submodule of \( M_i \), we have
\(\dim_k\operatorname{Tr}_i \leq \dim_kM_i.\)
Thus, \(\dim_k\operatorname{Tr}_i = \dim_kM_i\) and then $M_i = \operatorname{Tr}_i \in \operatorname{Gen} N$. This is a contradiction.
Hence, the nonzero $M_i/\operatorname{Tr}_i$ are pairwise non-isomorphic, and
\[
\left|M/\operatorname{Tr}_MN\right|
=\#\{i\mid M_i\notin\operatorname{Gen}N\}.
\]
Then the final assertion follows immediately, and we complete the proof.
\end{proof}

Now, we are in a position to give the main theorem of this section.
\begin{theorem} \label{3.3}
Let \( (A, T, B) \) be a splitting tilting triple.
Let \(M\oplus N\) be a basic \(\tau\)-tilting $A$-module with
\(N\in\mathcal{F}(T)\), and set $X:=M/\operatorname{Tr}_MN$. Assume that $X\in\mathcal{T}(T)$, no nonzero indecomposable direct summand of $M$ lies in $\operatorname{Gen}N$, and
$\operatorname{Hom}_A(X,\tau_{A,2}X)=0$. Then
\(
\operatorname{Hom}_A(T,X)\oplus\operatorname{Ext}_A^1(T,N)
\)
is a basic \( \tau_{2} \)-tilting $B$-module.
\end{theorem}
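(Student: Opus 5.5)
Set $V:=\operatorname{Hom}_A(T,X)$ and $U:=\operatorname{Ext}_A^1(T,N)$, and write $W:=V\oplus U$. We verify the three requirements of Definition \ref{tau-n} with $n=2$: the condition $\operatorname{Hom}_B(W,\tau_{B,2}W)=0$, the vanishing $\operatorname{Ext}^1_B(W,W)=0$, and the equality $|W|=|B|$, together with basicness.

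\textbf{Step 1 (the $\tau_2$-condition).} Split $\operatorname{Hom}_B(W,\tau_{B,2}W)$ into its four components and treat each with Lemma \ref{3.1}.
\begin{itemize}
\item The component $\operatorname{Hom}_B(V,\tau_{B,2}V)$ vanishes by Lemma \ref{3.1}(2), because $X\in\mathcal{T}(T)$ and $\operatorname{Hom}_A(X,\tau_{A,2}X)=0$ by hypothesis.
\item The component $\operatorname{Hom}_B(U,\tau_{B,2}U)$ vanishes by Lemma \ref{3.1}(3).
\item The component $\operatorname{Hom}_B(U,\tau_{B,2}V)$ vanishes by Lemma \ref{3.1}(4).
\item For $\operatorname{Hom}_B(V,\tau_{B,2}U)$, Lemma \ref{3.1}(5) reduces the claim to $\operatorname{Hom}_A(X,\tau_AN)=0$. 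By Lemma \ref{tau-gen} this is equivalent to $\operatorname{Ext}^1_A(N,\operatorname{Gen}X)=0$. Since $X$ is a quotient of $M$, we have $\operatorname{Gen}X\subseteq\operatorname{Gen}M$. The $\tau$-rigidity of $M\oplus N$ gives $\operatorname{Hom}_A(M,\tau N)=0$, hence $\operatorname{Ext}^1_A(N,\operatorname{Gen}M)=0$ by Lemma \ref{tau-gen} again.
\end{itemize}

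\textbf{Step 2 ($\operatorname{Ext}^1$-vanishing).} Lemma \ref{xt} translates each piece of $\operatorname{Ext}^1_B(W,W)$ back to $A$.
\begin{itemize}
\item $\operatorname{Ext}^1_B(V,V)\cong\operatorname{Ext}^1_A(X,X)$, which vanishes by Lemma \ref{3.2}(vii).
\item $\operatorname{Ext}^1_B(U,U)\cong\operatorname{Ext}^1_A(N,N)$, which vanishes by $\tau$-rigidity.
\item $\operatorname{Ext}^1_B(V,U)\cong\operatorname{Ext}^2_A(X,N)$ by Lemma \ref{xt}(3). This vanishes because $N\in\mathcal{F}(T)$ has $\operatorname{id}N\le1$ by Lemma \ref{splitting}(4).
\item $\operatorname{Ext}^1_B(U,V)\cong\operatorname{Hom}_A(N,X)$ by Lemma \ref{xt}(4). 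This vanishes by Lemma \ref{3.2}(vi).
\end{itemize}
Alternatively, Remark \ref{n-1,n} shows that $\operatorname{Ext}^1$-vanishing is only needed for $i<2$, and the computation above supplies exactly that.

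\textbf{Step 3 (counting and basicness).} This is the main point requiring care.

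For $X$: by Lemma \ref{3.2}(3) and the hypothesis that no summand of $M$ lies in $\operatorname{Gen}N$, $X$ is basic with $|X|=|M|$. Since $X\in\mathcal{T}(T)$ and $\operatorname{Hom}_A(T,-)$ is an equivalence $\mathcal{T}(T)\to\mathcal{Y}(T)$ (Theorem \ref{BB}), $V$ is basic and $|V|=|M|$. Here I use that each nonzero indecomposable summand of $X$ is nonzero in $\mathcal{T}(T)$ and so maps to a nonzero module.

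For $N$: since $N\in\mathcal{F}(T)$, the equivalence $\operatorname{Ext}^1_A(T,-):\mathcal{F}(T)\to\mathcal{X}(T)$ shows that $U$ is basic with $|U|=|N|$.

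Since $V\in\mathcal{Y}(T)$, $U\in\mathcal{X}(T)$ and $\mathcal{X}(T)\cap\mathcal{Y}(T)=0$, no summand of $V$ is isomorphic to a summand of $U$. Hence $W$ is basic and
$$|W|=|M|+|N|=|A|.$$
Finally, $|A|=|B|$ because tilting preserves the number of simples (equivalently, by the derived equivalence \eqref{RHom}). Therefore $|W|=|B|$.

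The only delicate point I anticipate is the count of $|X|$, and it is fully handled by Lemma \ref{3.2}(3).
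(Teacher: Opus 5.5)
Your proposal is correct and follows essentially the same three-step argument as the paper: Lemma~\ref{3.1} for the $\tau_{B,2}$-condition; Lemma~\ref{xt} together with Lemma~\ref{3.2}(vi),(vii) and $\operatorname{id}_A N\leq 1$ for the $\operatorname{Ext}^1$-vanishing; and Lemma~\ref{3.2}(3), the Brenner--Butler equivalences and $\mathcal{X}(T)\cap\mathcal{Y}(T)=0$ for basicness and the count. The only cosmetic difference is that you obtain $\operatorname{Hom}_A(X,\tau_AN)=0$ via Lemma~\ref{tau-gen} and $\operatorname{Gen}X\subseteq\operatorname{Gen}M$. The paper instead reads it off directly from $\operatorname{Hom}_A(M,\tau_AN)=0$: since $X$ is a quotient of $M$, $\operatorname{Hom}_A(X,\tau_AN)$ embeds into $\operatorname{Hom}_A(M,\tau_AN)$.
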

\begin{proof}
For convenience, set $W:=\operatorname{Hom}_A(T, M / \operatorname{Tr}_M N)$ and $E:=\operatorname{Ext}_A^1(T, N)$.

\noindent{\bf Step 1:} \(\operatorname{Hom}_B(W \oplus E, \tau_{B,2}(W \oplus E)) = 0.\)

Since \( M \oplus N \) is \( \tau_A \)-tilting, we have
\(\operatorname{Hom}_A(M, \tau_A N) = 0.\)
Thus, \(\operatorname{Hom}_A(X,\tau_A N)=0\).
By assumption, $M/\operatorname{Tr}_MN\in\mathcal{T}(T)$.
Replacing \(M\) by \(X\) in Lemma \ref{3.1} and using the condition
$\operatorname{Hom}_A(X,\tau_{A,2}X)=0$, we finish the proof of Step 1.

\noindent{\bf Step 2:}
\(\operatorname{Ext}_B^1(W \oplus E, W \oplus E) = 0.\)

By Lemma \ref{xt}, Lemma \ref{3.2}(vi),(vii), and
Lemma \ref{splitting}(4), we have
\begin{align*}
\operatorname{Ext}_B^1(W,W)
&\cong \operatorname{Ext}_A^1(M / \operatorname{Tr}_M N,
 M / \operatorname{Tr}_M N)=0,\\
\operatorname{Ext}_B^1(E,E)
&\cong \operatorname{Ext}_A^1(N,N)=0,\\
\operatorname{Ext}_B^1(W,E)
&\cong \operatorname{Ext}_A^2(M / \operatorname{Tr}_M N,N)=0,\\
\operatorname{Ext}_B^1(E,W)
&\cong \operatorname{Hom}_A(N,M / \operatorname{Tr}_M N)=0.
\end{align*}
Hence, we finish the proof of Step 2.

\noindent{\bf Step 3:}
\( W \oplus E \) is basic and \(|W \oplus E| = |B|.\)

By Theorem \ref{BB}, the two Brenner--Butler equivalences preserve isomorphisms and indecomposable properties of objects. Moreover, $W\in\mathcal{Y}(T)$ and $E\in\mathcal{X}(T)$, so they have no nonzero indecomposable direct summand in common. Hence, by Lemma \ref{3.2}(3), we get that $W\oplus E$ is basic and
\[
|W\oplus E|
=|M/\operatorname{Tr}_MN|+|N|
=|M|+|N|
=|M\oplus N|
=|A|
=|B|.
\]
The last equality also follows from the derived equivalence \eqref{RHom}, which identifies the ranks of the Grothendieck groups.
Therefore, we complete the proof.
\end{proof}

The following proposition shows that the two corresponding hypotheses in Theorem \ref{3.3} are the best possible conditions for the transferred $\tau_{B,2}$-orthogonality and for the number of indecomposable direct summands, respectively.

\begin{proposition}\label{sharp-transfer}
Let $(A,T,B)$ be a splitting tilting triple, let $X\in\mathcal{T}(T)$, and set
$W=\Hom_A(T,X)$. Then
\[
\Hom_B(W,\tau_{B,2}W)=0
\quad\Longleftrightarrow\quad
\Hom_A(X,\tau_{A,2}X)=0.
\]
Moreover, let $M\oplus N$ be a basic $\tau$-tilting $A$-module such that
$N\in\mathcal{F}(T)$ and $X=M/\operatorname{Tr}_MN\in\mathcal{T}(T)$, and set $W=\Hom_A(T,X)$,
$E=\Ext_A^1(T,N)$. Then
\[
|W\oplus E|=|B|
\quad\Longleftrightarrow\quad
\text{no nonzero indecomposable direct summand of $M$ lies in $\operatorname{Gen} N$}.
\]
\end{proposition}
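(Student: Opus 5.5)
The first equivalence is the special case $M=M'=X$ of Lemma \ref{3.1}(2), which was proved for arbitrary $M,M'\in\mathcal{T}(T)$ in a splitting tilting triple. I will just invoke it with $M=M'=X$. There is nothing further to check, because that proof used only $X\in\mathcal{T}(T)$: it took the sequence \eqref{star1} for $X$, identified $\tau_{B,2}\Hom_A(T,X)$ with $\tau_B\Hom_A(T,X_1)$, and passed through Lemma \ref{tau-gen}.

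For the second equivalence I will count indecomposable summands on both sides.

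\textbf{Step 1: reduce to a count in $\operatorname{mod}A$.} By Theorem \ref{BB}, $\Hom_A(T,-)$ is an equivalence $\mathcal{T}(T)\to\mathcal{Y}(T)$ and $\Ext^1_A(T,-)$ is an equivalence $\mathcal{F}(T)\to\mathcal{X}(T)$. Both preserve indecomposability and non-isomorphism, so $|W|=|X|$ and $|E|=|N|$. Since $W\in\mathcal{Y}(T)$ and $E\in\mathcal{X}(T)$, and $\Hom_B(\mathcal{X},\mathcal{Y})=0$, they share no indecomposable summand. Indeed, a common indecomposable summand would lie in $\mathcal{X}\cap\mathcal{Y}=0$. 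Hence $|W\oplus E|=|X|+|N|$.

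\textbf{Step 2: compute $|X|$.} By Lemma \ref{3.2}(3), $|X|=\#\{i\mid M_i\notin\operatorname{Gen}N\}$, where $M=\bigoplus_{i=1}^n M_i$ with the $M_i$ pairwise non-isomorphic; this uses that $M$ is basic. On the other side, $|B|=|A|=|M\oplus N|=|M|+|N|=n+|N|$. The first equality holds by the derived equivalence \eqref{RHom}. The third holds because $M\oplus N$ is basic, so $M$ and $N$ have no common indecomposable summand.

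\textbf{Step 3: conclude.} Combining Steps 1 and 2, $|W\oplus E|=|B|$ holds if and only if $\#\{i\mid M_i\notin\operatorname{Gen}N\}=n$, that is, if and only if no $M_i$ lies in $\operatorname{Gen}N$. Every nonzero indecomposable direct summand of $M$ is isomorphic to some $M_i$, and $\operatorname{Gen}N$ is closed under isomorphism. So this last condition is exactly the stated one.

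The main point needing care is Step 1, and specifically that $W$ and $E$ share no summand and that the Brenner--Butler functors keep distinct indecomposables distinct. Both follow from the equivalences and from the torsion pair $(\mathcal{X}(T),\mathcal{Y}(T))$. Everything else is bookkeeping with Lemma \ref{3.2}(3).
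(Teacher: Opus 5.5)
Your proposal is correct and follows essentially the same route as the paper. The first equivalence is Lemma~\ref{3.1}(2) with $M=M'=X$. The second is the count $|W\oplus E|=|X|+|N|$, which uses the Brenner--Butler equivalences and the fact that the torsion pair $(\mathcal{X}(T),\mathcal{Y}(T))$ prevents $W$ and $E$ from sharing summands; this is combined with Lemma~\ref{3.2}(3) and $|M|+|N|=|A|=|B|$.
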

\begin{proof}
The first equivalence follows directly from Lemma \ref{3.1}$(2)$. For the second assertion, Lemma \ref{3.2}(3) gives
\[
|X|=\#\{i\mid M_i\notin\operatorname{Gen} N\},
\]
where $M=\bigoplus_iM_i$ is basic. The Brenner--Butler equivalences preserve indecomposables, while $W\in\mathcal{Y}(T)$ and $E\in\mathcal{X}(T)$ have no nonzero direct summand in common. Hence
$|W\oplus E|=|X|+|N|$. Since $|M|+|N|=|A|=|B|$, the desired equivalence follows.
\end{proof}

In the following, for any bound quiver algebra $A=kQ/I$ over a field $k$, we denote by $S_i$ the simple $A$-module corresponding to the vertex $i$ of $Q$, and denote by $P_i$ and $I_i$ the projective cover and injective envelope of $S_i$, respectively. For any $A$-module $M$, denote by $\operatorname{rad}M$ and $\operatorname{soc}M$ the radical and socle of $M$, respectively.
Now let us provide an example illustrating Theorem \ref{3.3} as follows.
\begin{example}\label{genuine-example}
Let $A=kQ/I$, where
\[
Q:\quad 1\overset{a_1}\longrightarrow2\overset{a_2}\longrightarrow3\overset{a_3}\longrightarrow4\overset{a_4}\longrightarrow5
\]
and $I=<a_1a_2a_3,a_3a_4>$.
Set $T=P_1\oplus S_2\oplus P_2\oplus P_4\oplus P_5$.
There are exact sequences
\[
0\longrightarrow P_3\longrightarrow P_2\longrightarrow S_2\longrightarrow0
\]
and
\[
0\longrightarrow A\longrightarrow
P_1\oplus P_2^{\oplus2}\oplus P_4\oplus P_5
\longrightarrow S_2\longrightarrow0.
\]
By the Auslander--Reiten formula, we get $\Ext_A^1(S_2,T)=0$. Thus, $T$ is a classical tilting module. Moreover, we have
\[
  \mathcal{T}(T)=\operatorname{add}(P_5\oplus P_4 \oplus S_4\oplus P_2\oplus \text{rad}P_1\oplus S_2\oplus P_1\oplus I_2\oplus S_1)~\text{and}~ \mathcal{F}(T)=\operatorname{add}S_3.
\]
By the exact sequence
\[
0\longrightarrow S_3\longrightarrow I_3
\longrightarrow I_2\longrightarrow0,
\]
we have $\operatorname{id}_AS_3=1$. Thus, Lemma \ref{splitting} shows that
$(A,T,B)$ with $B=\End_A(T)$ is a splitting tilting triple.

Let $L=\text{rad}P_1$, and take
$M=P_1\oplus L\oplus P_2\oplus P_5$ and $N=S_3$. Clearly,
$M\in\mathcal{T}(T)$ and $N\in\mathcal{F}(T)$. The
Auslander--Reiten translations of $L$ and $S_3$ are $\tau_AL=P_3$ and $\tau_AS_3=S_4$, respectively,
and it is easy to see that
$\Hom_A(M\oplus N,P_3\oplus S_4)=0$.
Since $|M\oplus N|=5=|A|$, the module $M\oplus N$ is a basic $\tau$-tilting module. On the other hand, by the exact sequence
\[
0\longrightarrow P_5\longrightarrow P_4\longrightarrow P_2
\longrightarrow L\longrightarrow0~\text{and}~
0\longrightarrow P_5\longrightarrow P_4\longrightarrow P_3
\longrightarrow S_3\longrightarrow0,
\]
we conclude that $\operatorname{pd}_AL=\operatorname{pd}_AS_3=2$. Thus, $M\oplus N$ is not a classical tilting module.

We next calculate the trace by its definition. The only nonzero morphisms from $S_3$ to the summands of $M$ are
\[
S_3\lhook\joinrel\longrightarrow\operatorname{soc}P_1~~\text{and}~~
S_3\lhook\joinrel\longrightarrow\operatorname{soc}L.
\]
Thus, we obtain
\[
\operatorname{Tr}_MN
=S_3\oplus S_3\subseteq P_1\oplus L
\]
and
\[
X:=M/\operatorname{Tr}_MN
=I_2\oplus S_2\oplus P_2\oplus P_5.
\]
No nonzero indecomposable summand of $M$ lies in
$\operatorname{Gen} N=\operatorname{add}S_3$. Furthermore,
\[
\Omega_AI_2=S_3,\qquad \Omega_AS_2=P_3,
\]
and then
\[
\tau_{A,2}X=S_4\neq0,\qquad \Hom_A(X,S_4)=0.
\]
Thus, every condition of Theorem \ref{3.3} is satisfied nontrivially. Hence, we conclude that  \(
\operatorname{Hom}_A(T,X)\oplus\operatorname{Ext}_A^1(T,N)
\)
is a basic \( \tau_{B,2} \)-tilting module.

Now let us check this  result  directly over $B$. We denote the summands of $T$ by
\[
T_1=P_1,\quad T_2=S_2,\quad T_3=P_2,\quad
T_4=P_4,\quad T_5=P_5.
\]
The non-identity irreducible morphisms in $\operatorname{add}T$ are
\[
T_3\longrightarrow T_1,\qquad
T_3\longrightarrow T_2,\qquad
T_4\longrightarrow T_3,\qquad
T_5\longrightarrow T_4,
\]
and every composition of two consecutive morphisms is zero. Since we use right
$B$-modules, the quiver $Q_B$ of $B$ is
\[
\xymatrix@=0.4cm{1'\ar[dr]&&&\\
&3'\ar[r]&4'\ar[r]&5'\\
2'\ar[ur]&&&
}
\]
and $B\cong kQ_B/J_{Q_B}^2$, where $J_{Q_B}$ is the ideal of $kQ_B$ generated by all arrows of $Q_B$.

A direct computation shows that
\(\Hom_A(T,X)\cong
I_{3'}\oplus P_{2'}\oplus P_{3'}\oplus P_{5'}\) and
\(\Ext_A^1(T,S_3)\cong S_{2'}\). Thus, the module constructed in Theorem \ref{3.3} is
\[
H:=\Hom_A(T,X)\oplus \Ext_A^1(T,S_3)
\cong I_{3'}\oplus P_{2'}\oplus P_{3'}\oplus P_{5'}\oplus S_{2'}.
\]

By the Auslander--Reiten formula, $\Ext_B^1(H,H)=0$.
Moreover, $\Omega_BI_{3'}\cong S_{3'}$,
$\Omega_B S_{2'}\cong S_{3'}$
and the almost split sequence
\[
0\longrightarrow S_{4'}\longrightarrow P_{3'}
\longrightarrow S_{3'}\longrightarrow0
\]
gives $\tau_B S_{3'}\cong S_{4'}$. Hence,
$\tau_{B,2}H\cong S_{4'}\oplus S_{4'}$. There are no nonzero morphisms from
any indecomposable summand of $H$ to $S_{4'}$, and hence
$\Hom_B(H,\tau_{B,2}H)=0$.
Finally, the five  summands of $H$ are pairwise non-isomorphic, so
$H$ is basic and $|H|=5=|B|$. This directly verifies that $H$ is a basic
$\tau_{B,2}$-tilting module.

It is worth noting that the above module $H$ is not a
$2$-tilting module. Indeed, the following minimal projective resolution
\[
0\longrightarrow P_{5'}\longrightarrow P_{4'}
\longrightarrow P_{3'}\longrightarrow P_{1'}\oplus P_{2'}
\longrightarrow I_{3'}\longrightarrow 0
\]
shows that $\operatorname{pd}_BI_{3'}=3$.
\end{example}

\begin{corollary}\label{splitting-1-tilting}
Let $(A,T,B)$ be a splitting tilting triple. Let $M\oplus N$ be a basic
$1$-tilting $A$-module with
$N\in\mathcal{F}(T)$, and set $X:=M/\operatorname{Tr}_MN$.
Assume that $X\in\mathcal{T}(T)$, no nonzero indecomposable direct
summand of $M$ lies in $\operatorname{Gen}N$, and
$\operatorname{Hom}_A(X,\tau_{A,2}X)=0$. Then
\[
H:=\operatorname{Hom}_A(T,X)\oplus\operatorname{Ext}_A^1(T,N)
\]
is a basic  $2$-tilting
$B$-module if and only if
\(
\operatorname{pd}_BH\leq2
\)
and there is an exact sequence
\[
0\longrightarrow B\longrightarrow H^0\longrightarrow H^1
\longrightarrow H^2\longrightarrow0~\text{with~each}~
H^i\in\operatorname{add}H.
\]
\end{corollary}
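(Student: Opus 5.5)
The plan is to reduce the corollary to Theorem \ref{3.3} together with the definition of $n$-tilting modules (Definition \ref{ntilting}). A $1$-tilting module is in particular $\tau$-tilting. For a module $U$ with $\operatorname{pd}U\le 1$ one has $\operatorname{Hom}_A(U,\tau U)\cong \mathbb{D}\operatorname{Ext}^1_A(U,U)$, so $\operatorname{Hom}_A(U,\tau U)=0$. Also $|U|=|A|$ by the classical count for tilting modules. Hence $M\oplus N$ is a basic $\tau$-tilting $A$-module. All remaining hypotheses of Theorem \ref{3.3} are assumed verbatim. So Theorem \ref{3.3} shows that $H$ is a basic $\tau_2$-tilting $B$-module, and in particular strongly $\tau_2$-rigid.

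For the ``only if'' direction there is nothing to prove. If $H$ is a basic $2$-tilting $B$-module, then (T1) gives $\operatorname{pd}_BH\le 2$ and (T3) gives the required coresolution of $B$ by modules in $\operatorname{add}H$.

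For the ``if'' direction, conditions (T1) and (T3) of Definition \ref{ntilting} are exactly the assumptions, and basicness comes from Theorem \ref{3.3}. It remains to verify (T2), namely $\operatorname{Ext}^1_B(H,H)=0=\operatorname{Ext}^2_B(H,H)$.
\begin{enumerate}
\item[(a)] Vanishing of $\operatorname{Ext}^1_B(H,H)$ follows from the strong $\tau_2$-rigidity in Definition \ref{tau-n}; alternatively it is Step 2 in the proof of Theorem \ref{3.3}.
\item[(b)] For $\operatorname{Ext}^2_B(H,H)$, apply Lemma \ref{tau-gen} with $n=2$ and $X=Y=H$. The vanishing $\operatorname{Hom}_B(H,\tau_{B,2}H)=0$ gives $\operatorname{Ext}^2_B(H,\operatorname{Gen}H)=0$. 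Since $H\in\operatorname{Gen}H$, this yields $\operatorname{Ext}^2_B(H,H)=0$, as in Remark \ref{n-1,n}.
\end{enumerate}
Thus $H$ satisfies (T1)--(T3) and is a basic $2$-tilting $B$-module.

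The only step needing care is the opening reduction, that a basic $1$-tilting module is $\tau$-tilting. This is the standard Auslander--Reiten formula argument for modules of projective dimension at most one, combined with the equality $|T|=|A|$ for tilting modules. Everything else is direct bookkeeping against Definition \ref{ntilting}. It is worth remarking in the write-up that, for the ``if'' direction, the $\tau_2$-rigidity coming from Theorem \ref{3.3} supplies exactly the Ext-vanishing that (T2) requires. Consequently the only genuinely extra conditions are the projective dimension bound and the coresolution of $B$.
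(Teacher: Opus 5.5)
Your proposal is correct and follows essentially the same route as the paper. The paper's proof likewise notes that a $1$-tilting module is $\tau$-tilting, invokes Theorem \ref{3.3} to get that $H$ is basic and strongly $\tau_2$-rigid, and uses Definition \ref{tau-n} together with Lemma \ref{tau-gen} to obtain $\operatorname{Ext}_B^1(H,H)=0=\operatorname{Ext}_B^2(H,H)$; the equivalence then reduces to conditions (T1) and (T3). Your argument just spells out these steps in more detail.
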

\begin{proof}
Note that every $1$-tilting module is $\tau$-tilting. By
Definition \ref{tau-n} and Lemma \ref{tau-gen},
$\operatorname{Ext}_B^1(H,H)=0=\operatorname{Ext}_B^2(H,H)$.
The equivalence is exactly the conditions {\rm(T1)} and
{\rm(T3)} in Definition \ref{ntilting}.
\end{proof}

The following gives an example illustrating Corollary \ref{splitting-1-tilting} with $A$ non-hereditary.

\begin{example}\label{nonhereditary-splitting-example}
Let $A=kQ/I$, where
\[
Q:\quad 1\overset{a_1}\longrightarrow2\overset{a_2}\longrightarrow3\overset{a_3}\longrightarrow4\overset{a_4}\longrightarrow5
\]
and $I=<a_1a_2a_3>$. Then $A$ is not hereditary. Let
$T=P_1\oplus S_2\oplus P_2\oplus P_4\oplus P_5$.
By the exact sequence
\begin{equation}\label{exttzhl}
0\longrightarrow P_3\longrightarrow P_2\longrightarrow S_2
\longrightarrow0,\end{equation}
we have $\operatorname{pd}_AT\leq1$. Applying $\operatorname{Hom}_A(-,T)$ to \eqref{exttzhl} and noting that
${\rm dim}_k\Hom_A(S_2,T)=1$, ${\rm dim}_k\Hom_A(P_2,T)=3$ and ${\rm dim}_k\Hom_A(P_3,T)=2$, we get $\Ext^1_A(S_2,T)=0$, and then $\operatorname{Ext}_A^1(T,T)=0$.
Noting that we have the exact sequence
\[
0\longrightarrow A\longrightarrow
P_1\oplus P_2^{\oplus2}\oplus P_4\oplus P_5
\longrightarrow S_2\longrightarrow0,
\]
we conclude that $T$ is a classical tilting
module. A direct calculation gives
\[
\mathcal{T}(T)=\text{mod}A/\text{add}(P_3\oplus P_3/\text{soc}P_3\oplus S_3)~\text{and}~ \mathcal{F}(T)=\operatorname{add}S_3.
\]
By the exact sequence
$0\to S_3\to I_3\to I_2
\to 0$,
we get $\operatorname{id}_AS_3=1$, and then Lemma
\ref{splitting} shows that $(A,T,B)$ with $B=\operatorname{End}_A(T)$
is a splitting tilting triple.

Set $L=\operatorname{rad}P_1$,
$M=P_1\oplus L\oplus P_2\oplus P_5$ and $N=S_3$. Then $M\in\mathcal{T}(T)$ and $N\in\mathcal{F}(T)$. The exact
sequences
\[
0\longrightarrow P_4\longrightarrow P_2\longrightarrow L
\longrightarrow0~\text{and}~
0\longrightarrow P_4\longrightarrow P_3\longrightarrow S_3
\longrightarrow0
\]
give $\operatorname{pd}_A(M\oplus N)\leq1$ and
$\operatorname{Ext}_A^1(M\oplus N,M\oplus N)=0$. Since $M\oplus N$
has five pairwise non-isomorphic indecomposable direct summands, it is a
basic $1$-tilting $A$-module.

The only nonzero morphisms from $S_3$ to the summands of $M$ are the embeddings
into the socles of $P_1$ and $L$. Thus, we get
\[
\operatorname{Tr}_MN
=S_3\oplus S_3\subseteq P_1\oplus L~\text{and}~
X:=M/\operatorname{Tr}_MN
=I_2\oplus S_2\oplus P_2\oplus P_5.
\]
No nonzero indecomposable direct summand of $M$ belongs to
$\operatorname{Gen}N=\operatorname{add}S_3$. Furthermore,
$\Omega_AI_2=S_3$ and $\Omega_AS_2=P_3$, and hence
$\tau_{A,2}X=S_4$ and
$\operatorname{Hom}_A(X,\tau_{A,2}X)=0$.

The quiver $Q_B$ of $B$ is
\[
\xymatrix@=0.45cm{1'\ar[dr]^{b_1}&&&\\
&3'\ar[r]^{b_3}&4'\ar[r]^{b_4}&5'\\
2'\ar[ur]_{b_2}&&&}
\]
and
$B\cong kQ_B/\langle b_1b_3,b_2b_3\rangle$.
A direct calculation gives
\[
H:=\operatorname{Hom}_A(T,X)\oplus\operatorname{Ext}_A^1(T,N)\cong I_{3'}\oplus P_{2'}\oplus P_{3'}\oplus P_{5'}\oplus S_{2'}.
\]
The minimal projective resolutions
\[
0\longrightarrow P_{4'}\longrightarrow P_{3'}
\longrightarrow P_{1'}\oplus P_{2'}\longrightarrow I_{3'}
\longrightarrow0
\]
and
\[
0\longrightarrow P_{4'}\longrightarrow P_{3'}
\longrightarrow P_{2'}\longrightarrow S_{2'}\longrightarrow0
\]
show that $\operatorname{pd}_BH=2$. Moreover, we have the following exact sequence
\[
0\longrightarrow B\longrightarrow
I_{3'}\oplus P_{2'}\oplus P_{3'}^{\oplus2}\oplus P_{5'}
\longrightarrow S_{2'}\oplus P_{2'}
\longrightarrow S_{2'}\longrightarrow0,
\]
where the three terms after $B$ belong to $\operatorname{add}H$.  Therefore, by Corollary
\ref{splitting-1-tilting}, we get $H$ is a basic $2$-tilting
$B$-module.
\end{example}

As an application, we recover the following main result of \cite{PENG}.
\begin{corollary}\label{3.4}\textup{(\cite[Theorem 2.6]{PENG})}
Let \( (A, T, B) \) be a tilting triple with $A$ hereditary.
Let \( M \oplus N \) be a basic $1$-tilting $A$-module with \( M \in \mathcal{T}(T) \) and \( N \in \mathcal{F}(T) \).
Assume that no nonzero indecomposable direct summand of \( M \) lies in \( \operatorname{Gen} N \). Then
\(\operatorname{Hom}_A(T, M / \operatorname{Tr}_M N) \oplus \operatorname{Ext}_A^1(T, N)\)
is a basic $2$-tilting $B$-module.
\end{corollary}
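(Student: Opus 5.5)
The plan is to deduce this from Corollary \ref{splitting-1-tilting}, and hence from Theorem \ref{3.3}, by checking all the hypotheses in the hereditary setting. Set $X:=M/\operatorname{Tr}_MN$ and $H:=\operatorname{Hom}_A(T,X)\oplus\operatorname{Ext}_A^1(T,N)$.

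First, the triple is splitting. Since $A$ is hereditary, every $A$-module has injective dimension at most $1$. In particular $\operatorname{id}_A L\le 1$ for all $L\in\mathcal{F}(T)$, so Lemma \ref{splitting}(4) shows that $(A,T,B)$ is a splitting tilting triple.

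Next come the hypotheses on $X$ and $N$.
\begin{enumerate}
\item[(a)] $M\oplus N$ is a $1$-tilting module and hence $\tau$-tilting; $N\in\mathcal{F}(T)$ holds by assumption.
\item[(b)] $X\in\mathcal{T}(T)$ because $X$ is a quotient of $M\in\mathcal{T}(T)$ and $\mathcal{T}(T)$ is closed under quotients.
\item[(c)] $\tau_{A,2}X=\tau_A\Omega_A X=0$, since $\Omega_AX$ is projective over the hereditary algebra $A$. Thus $\operatorname{Hom}_A(X,\tau_{A,2}X)=0$ trivially.
\item[(d)] The condition that no nonzero indecomposable summand of $M$ lies in $\operatorname{Gen}N$ is assumed.
\end{enumerate}
Theorem \ref{3.3} therefore gives that $H$ is a basic $\tau_2$-tilting $B$-module, and Corollary \ref{splitting-1-tilting} reduces the claim to showing $\operatorname{pd}_BH\le 2$ together with the coresolution condition (T3).

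For $\operatorname{pd}_BH\le 2$, I would use that $\mathbf{R}\Hom_A(T,-)$ sends $X$ to $\operatorname{Hom}_A(T,X)$ and $N$ to $\operatorname{Ext}^1_A(T,N)[-1]$. Then
\[
\operatorname{Ext}^i_B(H,Y)\cong\operatorname{Hom}_{D^b(B)}(H,Y[i])
\]
can be transported to $D^b(A)$. Take an arbitrary $Y\in\operatorname{mod}B$. Since $B$ is tilted, $Y$ splits as $Y\cong Y_X\oplus Y_Y$, and its preimage is a complex with cohomology only in degrees $0$ and $1$; over hereditary $A$ it is a sum of shifted modules. The vanishing of $\operatorname{Ext}^3_B(H,Y)$ then follows from $\operatorname{gl.dim}A\le1$ together with the degree shifts. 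Alternatively, one can use the standard bound $\operatorname{gl.dim}B\le2$ for tilted algebras, which gives $\operatorname{pd}_BH\le 2$ immediately; I would invoke this directly.

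For (T3), the cleanest route is to count: for a module $H$ with $\operatorname{pd}H\le n$, $\operatorname{Ext}^{i}(H,H)=0$ for $1\le i\le n$, and $|H|=|B|$, condition (T3) holds automatically. This is Rickard's/Miyashita's theorem that a partial $n$-tilting module with the right number of summands is tilting: $H$ is then a silting object of $K^b(\operatorname{proj}B)$, since a presilting complex with $|B|$ summands is silting. I expect this step to be the main obstacle if a self-contained argument is wanted. In that case I would produce the coresolution $0\to B\to H^0\to H^1\to H^2\to0$ by taking successive left $\operatorname{add}H$-approximations, and use $\operatorname{Ext}^{1,2}_B(H,H)=0$ and $\operatorname{pd}H\le2$ to see that the process ends with a module in $\operatorname{add}H$ after three steps.
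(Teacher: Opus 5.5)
Your check of the hypotheses of Theorem \ref{3.3} is correct and matches the paper's own check in the proof of Corollary \ref{phi-bijection-bb}: hereditary gives splitting via Lemma \ref{splitting}(4), $\mathcal{T}(T)$ is closed under quotients, and $\tau_{A,2}=0$. Deriving $\operatorname{pd}_BH\le 2$ from $\operatorname{gl.dim}B\le 2$ is also fine. The gap is condition (T3). You invoke the principle that $\operatorname{pd}H\le n$, $\operatorname{Ext}^i_B(H,H)=0$ for $1\le i\le n$ and $|H|=|B|$ force $H$ to be $n$-tilting, or equivalently that a presilting complex with $|B|$ summands is silting. This is not a theorem of Rickard or Miyashita. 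It holds for $n=1$ (Bongartz completion) and for two-term presilting complexes (Adachi--Iyama--Reiten). However, the projective resolution of $H$ has three terms, and for $n\ge 2$ no such general result is available. Your fallback does not close the gap either. When you iterate left $\operatorname{add}H$-approximations, nothing in $\operatorname{Ext}^1_B(H,H)=0=\operatorname{Ext}^2_B(H,H)$ and $\operatorname{pd}_BH\le 2$ forces the final cokernel into $\operatorname{add}H$; that is exactly the generation statement you need.

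What is missing is a proof that $\operatorname{thick}H=K^b(\operatorname{proj}B)$ that uses the specific shape of $H$. The paper obtains it in Section 4 (Theorem \ref{main1}, Proposition \ref{mac}, Corollary \ref{comparison}). It takes the two-term silting complex $\boldsymbol{X}\oplus\boldsymbol{Y}$ given by minimal projective resolutions of $M$ and $N$, mutates at $\boldsymbol{X}$, and shows that the new summand $\boldsymbol{Z}$ is the stalk complex $M/\operatorname{Tr}_MN$. The cone hypothesis there is equivalent to the $\operatorname{Gen}N$ hypothesis here, since ${\rm H}^0(\operatorname{cone}\lambda)=\operatorname{Coker}\lambda$ for $\lambda:N''\to M''$. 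Then $H\cong\boldsymbol{W}$ is silting, which gives $\operatorname{pd}_BH\le 2$, self-orthogonality and generation at once.

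The key step of Proposition \ref{mac} also repairs your route directly. Put $Q=M\oplus N$ and $U=\operatorname{Tr}_MN$. One checks $\operatorname{Ext}^1_A(Q\oplus U,Q\oplus U)=0$ using $U\in\operatorname{Gen}Q$, the $\tau$-rigidity of $Q$, and $\operatorname{gl.dim}A\le 1$ applied to $0\to U\to M\to X\to 0$. Since $Q$ is already tilting, this gives $U\in\operatorname{add}Q$, and then $U\in\operatorname{add}N$ by the $\operatorname{Gen}N$ hypothesis. The same sequence now gives $M\in\operatorname{thick}(X\oplus N)$, so $\operatorname{thick}(X\oplus N[1])=D^b(A)$. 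Applying $\mathbf{R}\operatorname{Hom}_A(T,-)$ yields $\operatorname{thick}H=K^b(\operatorname{proj}B)$. Together with $\operatorname{pd}_BH\le 2$ and $\operatorname{Ext}^i_B(H,H)=0$ for $i>0$, the standard derived characterization of tilting modules then gives (T3).
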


In the remainder of this section, assume that $(A,T,B)$ is a
splitting tilting triple. For any $A$-modules $M$ and $N$, write $[M,N]$ for the componentwise
isomorphism class of the ordered pair $(M,N)$. Set
\[
\begin{split}
\mathfrak D_{\tau_2}(A,T):=\bigl\{[M,N]\ \bigm|\;&
M\oplus N\text{ is a basic }\tau_A\text{-tilting module},\  N\in\mathcal F(T),\\
&X_{M,N}:=M/\operatorname{Tr}_MN\in\mathcal T(T),\\
&\text{no nonzero indecomposable direct summand}\\
&\text{of }M\text{ lies in }\operatorname{Gen}N,\\
&\operatorname{Hom}_A(X_{M,N},\tau_{A,2}X_{M,N})=0
\bigr\}
\end{split}
\]
and let $\mathfrak T_{\tau_2}(B)$ be the set of isomorphism classes
$[H]$ of basic $\tau_{2}$-tilting $B$-modules. By Theorem \ref{3.3}, we have
the following well-defined map:
\begin{equation}\label{phi}
\begin{split}
\Phi:\mathfrak D_{\tau_2}(A,T)\longrightarrow
\mathfrak T_{\tau_2}(B),\quad
[M,N]\longmapsto
\bigl[\operatorname{Hom}_A(T,X_{M,N})
      \oplus\operatorname{Ext}_A^1(T,N)\bigr].
\end{split}
\end{equation}

\begin{proposition}\label{phi-injective}
The map $\Phi$ in \eqref{phi} is injective.
\end{proposition}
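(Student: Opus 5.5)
Suppose $[M,N],[M',N']\in\mathfrak D_{\tau_2}(A,T)$ have the same image under $\Phi$, i.e.
\[
\operatorname{Hom}_A(T,X)\oplus\operatorname{Ext}_A^1(T,N)\cong\operatorname{Hom}_A(T,X')\oplus\operatorname{Ext}_A^1(T,N'),
\]
with $X=X_{M,N}$ and $X'=X_{M',N'}$. The plan is to recover, in turn, $N$, then $X$, then $M$ from the $B$-module $H$.

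\emph{Step 1: recover $N$ and $X$.} The summands $\operatorname{Hom}_A(T,X),\operatorname{Hom}_A(T,X')$ lie in $\mathcal Y(T)$, and $\operatorname{Ext}_A^1(T,N),\operatorname{Ext}_A^1(T,N')$ lie in $\mathcal X(T)$. Since $(\mathcal X(T),\mathcal Y(T))$ is a torsion pair, the torsion part of $H$ is functorial. Uniqueness of the torsion decomposition (or Krull--Schmidt together with $\mathcal X(T)\cap\mathcal Y(T)=0$) therefore gives
\[
\operatorname{Ext}_A^1(T,N)\cong\operatorname{Ext}_A^1(T,N')\quad\text{and}\quad \operatorname{Hom}_A(T,X)\cong\operatorname{Hom}_A(T,X').
\]
Applying the quasi-inverses $\operatorname{Tor}_1^B(-,T)$ and $-\otimes_BT$ from Theorem \ref{BB} yields $N\cong N'$ and $X\cong X'$.

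\emph{Step 2: recover $M$ from $N$ and $X$.} This is the main obstacle. Since $N\cong N'$, we may assume $N'=N$. Both $M\oplus N$ and $M'\oplus N$ are basic $\tau$-tilting modules containing $N$, and their quotients $X=M/\operatorname{Tr}_MN$ and $M'/\operatorname{Tr}_{M'}N$ are isomorphic. I would show that $M$ is determined as the ``$\operatorname{Gen}N$-envelope'' of $X$, as follows.

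First, by Lemma \ref{3.2}(3) and the hypothesis that no indecomposable summand of $M$ lies in $\operatorname{Gen}N$, the summands $M_i$ of $M$ correspond bijectively to the indecomposable summands $X_i=M_i/\operatorname{Tr}_{M_i}N$ of $X$.

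Next, for each $i$, the sequence $0\to\operatorname{Tr}_{M_i}N\to M_i\to X_i\to0$ has $\operatorname{Tr}_{M_i}N\in\operatorname{Gen}N$, and $M_i$ is $\operatorname{Ext}^1$-projective relative to $\operatorname{Gen}N$, since $\operatorname{Ext}_A^1(M,\operatorname{Gen}N)=0$ by Lemma \ref{tau-gen}. Now suppose $M'_i$ is the summand of $M'$ with $X'_i\cong X_i$. The surjection onto $X_i$ lifts along $\pi'_i$ because $\operatorname{Ext}^1_A(M_i,\operatorname{Tr}_{M'_i}N)=0$; this needs $\operatorname{Hom}_A(M_i,\tau N)=0$ and $\operatorname{Tr}_{M'_i}N\in\operatorname{Gen}N$, which is the argument of Lemma \ref{3.2}(2)(i)--(iv) applied to the $\tau$-rigid module $M\oplus N$. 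This produces maps $g\colon M_i\to M'_i$ and $g'\colon M'_i\to M_i$ compatible with the identification $X_i\cong X'_i$, exactly as in the commutative diagram in the proof of Lemma \ref{3.2}(3).

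Then $g'g$ induces an automorphism of $X_i$. Since $\operatorname{End}_A(M_i)\to\operatorname{End}_A(X_i)$ is a surjective ring map between local rings, its kernel lies in the radical; hence $g'g$ is invertible, and similarly $gg'$ is invertible. Therefore $M_i\cong M'_i$ for every $i$, so $M\cong M'$.

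If this direct lifting argument runs into trouble with the cross-term $\operatorname{Ext}^1_A(M_i,\operatorname{Tr}_{M'_i}N)$, I would instead use the following alternative. Both $M\oplus N$ and $M'\oplus N$ are $\tau$-tilting modules containing $N$, with $\operatorname{Gen}(M\oplus N)$ and $\operatorname{Gen}(M'\oplus N)$ both equal to the smallest torsion class containing $N$ and $X$. Adachi--Iyama--Reiten's bijection between support $\tau$-tilting modules and functorially finite torsion classes then gives $M\oplus N\cong M'\oplus N$, and Krull--Schmidt yields $M\cong M'$. Verifying this equality of torsion classes is the point I expect to require the most care.
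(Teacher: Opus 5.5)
Your proof is correct. Step 1 matches the paper, and your fallback for Step 2 is exactly the paper's argument; your primary lifting argument is a genuinely different route.

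The paper's Step 2 runs as follows. $\operatorname{Gen}(M\oplus N)$ and $\operatorname{Gen}(M'\oplus N)$ are torsion classes (Adachi--Iyama--Reiten), and both contain $X$ and $N$. Conversely, $M\in\operatorname{Tors}(X\oplus N)$ by extension closure along $0\to\operatorname{Tr}_MN\to M\to X\to 0$, since $\operatorname{Tr}_MN\in\operatorname{Gen}N$; the same holds for $M'$. So both torsion classes equal $\operatorname{Tors}(X\oplus N)$, and the AIR bijection gives $\operatorname{add}(M\oplus N)=\operatorname{add}(M'\oplus N)$. The step you flagged as delicate is in fact two lines.

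Your lifting argument also goes through without trouble:
\begin{itemize}
\item The cross term vanishes because $\operatorname{Ext}^1_A(M,\operatorname{Gen}N)=0$. By Lemma \ref{tau-gen} this is equivalent to $\operatorname{Hom}_A(N,\tau_A M)=0$, and symmetrically for $M'$. You wrote the condition in the reversed order, $\operatorname{Hom}_A(M_i,\tau_A N)=0$; both hold by $\tau$-rigidity of $M\oplus N$, but the one that matters is $\operatorname{Hom}_A(N,\tau_AM)=0$.
\item The map $y\mapsto\bar y$ with $\bar y\pi_i=\pi_i y$ is well defined, since $\operatorname{Hom}_A(\operatorname{Tr}_{M_i}N,X_i)=0$ by Lemma \ref{3.2}(2)(v).
\item It is a surjective ring homomorphism onto the nonzero local ring $\operatorname{End}_A(X_i)$, so its kernel lies in the radical, and $g'g$ and $gg'$ are invertible.
\end{itemize}

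As for what each route buys: your argument is elementary and avoids the AIR classification theorem. However, it genuinely needs the hypothesis that no indecomposable summand of $M$ (or $M'$) lies in $\operatorname{Gen}N$. That hypothesis is what makes every $X_i$ nonzero and indecomposable, and what lets you match the summands of $M$ and $M'$ via Krull--Schmidt for $X\cong X'$. The paper's torsion-class argument never uses that hypothesis. It therefore shows more generally that a basic $\tau$-tilting module $M\oplus N$ is determined by $N$ and $M/\operatorname{Tr}_MN$.
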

\begin{proof}
Let $[M_i,N_i]\in\mathfrak D_{\tau_2}(A,T)$ and set
$X_i:=M_i/\operatorname{Tr}_{M_i}N_i$ for $i=1,2$. Suppose that $\Phi([M_1,N_1])=\Phi([M_2,N_2])$. The uniqueness of the
decomposition with respect to the torsion pair
$(\mathcal X(T),\mathcal Y(T))$ gives
\[
\operatorname{Hom}_A(T,X_1)\cong\operatorname{Hom}_A(T,X_2)
\quad\text{and}\quad
\operatorname{Ext}_A^1(T,N_1)\cong\operatorname{Ext}_A^1(T,N_2).
\]
Then the two Brenner--Butler equivalences give
$X_1\cong X_2$ and $N_1\cong N_2$. We identify these modules and write
them as $X$ and $N$, respectively.

For $i=1,2$, let
$\mathcal G_i:=\operatorname{Gen}(M_i\oplus N)$. Since $M_i\oplus N$ is $\tau_A$-tilting, $\mathcal G_i$ is a torsion
class by \cite[Theorem 2.7]{ADA}. It contains both $X$ and $N$.
Consequently, if $\operatorname{Tors}(X\oplus N)$ denotes the smallest
torsion class containing $X\oplus N$, then
$\operatorname{Tors}(X\oplus N)\subseteq\mathcal G_i$. Conversely,
$\operatorname{Tr}_{M_i}N\in\operatorname{Gen}N
\subseteq\operatorname{Tors}(X\oplus N)$, and the exact sequence
\[
0\longrightarrow\operatorname{Tr}_{M_i}N
\longrightarrow M_i\longrightarrow X\longrightarrow0
\]
shows, by extension closure, that
$M_i\in\operatorname{Tors}(X\oplus N)$. Thus,
$\mathcal G_i\subseteq\operatorname{Tors}(X\oplus N)$. Hence, $\mathcal G_1=\mathcal G_2$. The bijection (cf. \cite[Theorem 2.7]{ADA}) between basic support $\tau$-tilting modules and functorially finite torsion classes
yields
\(
\operatorname{add}(M_1\oplus N)=\operatorname{add}(M_2\oplus N).
\)
Since $M_1\oplus N$ and $M_2\oplus N$ are basic, by the Krull--Schmidt theorem, we get
$M_1\cong M_2$. Therefore, $[M_1,N_1]=[M_2,N_2]$ and we finish the proof.
\end{proof}

Let $S$ be a simple non-injective $A$-module and let $P$ be the direct
sum of one representative of every indecomposable projective
$A$-module other than the projective cover $P(S)$ of $S$. The module
$T=\tau_A^{-1}S\oplus P$ is called a \emph{BB-tilting module} if
\(
\operatorname{pd}_A\tau_A^{-1}S\leq1\)
 and
\(\operatorname{Ext}_A^1(S,S)=0.
\)
Under these conditions, $T$ is a classical $1$-tilting module. Since $S$ is simple, we have
$\operatorname{Gen}S=\operatorname{add}S$. Then, since
$\operatorname{Ext}_A^1(S,S)=0$, by Lemma \ref{tau-gen}, we have
$\operatorname{Hom}_A(\tau_A^{-1}S,S)\cong\operatorname{Hom}_A(S,\tau_AS)=0$.

\begin{lemma}\label{unique}
Let $T=\tau_A^{-1}S\oplus P$ be a BB-tilting module and
$B=\operatorname{End}_A T$. Then
\(
\mathcal F(T)=\operatorname{add}S\) and
\(\mathcal X(T)=\operatorname{add}\operatorname{Ext}_A^1(T,S).
\)
In particular, $S$ and $\operatorname{Ext}_A^1(T,S)$ are, up to
isomorphism, the unique indecomposable objects of $\mathcal F(T)$ and
$\mathcal X(T)$, respectively.
\end{lemma}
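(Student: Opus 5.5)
We must show $\mathcal F(T)=\operatorname{add}S$ for the BB-tilting module $T=\tau_A^{-1}S\oplus P$, and then transport this to $\mathcal X(T)$ via Theorem \ref{BB}(2).

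For $\mathcal F(T)=\operatorname{add}S$, we prove both inclusions.

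First, $S\in\mathcal F(T)$. Since $P$ contains no copy of $P(S)$, we have $\operatorname{Hom}_A(P,S)=0$. The remark preceding the lemma gives $\operatorname{Hom}_A(\tau_A^{-1}S,S)=0$. Hence $\operatorname{Hom}_A(T,S)=0$, and so $\operatorname{add}S\subseteq\mathcal F(T)$.

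Conversely, let $Y\in\mathcal F(T)$. Then $\operatorname{Hom}_A(P,Y)=0$, so no simple other than $S$ occurs as a composition factor of $Y$; indeed, for each $P_j$ with $P_j\not\cong P(S)$, $\dim_k\operatorname{Hom}_A(P_j,Y)$ is the multiplicity of $S_j$ in $Y$. Thus every composition factor of $Y$ is isomorphic to $S$. Now argue by induction on length. Any nonzero $Y$ has a simple submodule, necessarily $\cong S$, giving an exact sequence $0\to S\to Y\to Y'\to 0$. The quotient $Y'$ again has all composition factors $\cong S$, so by induction $Y'\in\operatorname{add}S$. Since $\operatorname{Ext}_A^1(S,S)=0$, we get $\operatorname{Ext}^1_A(Y',S)=0$, so the sequence splits and $Y\in\operatorname{add}S$. (We can avoid needing $Y'\in\mathcal F(T)$ altogether: we only use that its composition factors are all $S$.)

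This is the only substantive step. It relies on identifying $\dim_k\operatorname{Hom}_A(P_j,-)$ with composition multiplicities, which holds with the convention that $S_j$ has endomorphism ring accounted for; over a general field one uses $\operatorname{Hom}_A(P_j,Y)\neq0$ if and only if $S_j$ is a composition factor of $Y$, which suffices.

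Finally, we transfer to $\mathcal X(T)$. By Theorem \ref{BB}(2), $\operatorname{Ext}_A^1(T,-)$ is an equivalence $\mathcal F(T)\to\mathcal X(T)$, and it is additive. It therefore sends $\operatorname{add}S$ onto $\operatorname{add}\operatorname{Ext}_A^1(T,S)$, using essential surjectivity together with the fact that equivalences preserve direct summands and sums. Since $S$ is simple it is indecomposable, and its image $\operatorname{Ext}_A^1(T,S)$ is indecomposable and nonzero (it is nonzero because $S\neq0$ and the functor is an equivalence). By Krull--Schmidt, $S$ and $\operatorname{Ext}_A^1(T,S)$ are then the unique indecomposables of $\mathcal F(T)$ and $\mathcal X(T)$, respectively.
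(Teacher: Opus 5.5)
Your proposal is correct and follows essentially the same route as the paper: show $\operatorname{Hom}_A(T,S)=0$, use $\operatorname{Hom}_A(P,L)=0$ to force every composition factor of $L\in\mathcal F(T)$ to be $S$, split using $\operatorname{Ext}_A^1(S,S)=0$, and transport to $\mathcal X(T)$ via the Brenner--Butler equivalence $\operatorname{Ext}_A^1(T,-)$. You additionally spell out details the paper leaves implicit: the induction on length behind the splitting, the field-independent nonvanishing criterion for composition factors, and why $\operatorname{Ext}_A^1(T,S)$ is indecomposable.
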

\begin{proof}
By the definition of {\em BB}-tilting modules, we get
$\operatorname{Hom}_A(T,S)=0$. Thus, we have
$\operatorname{add}S\subseteq\mathcal F(T)$. Conversely, let
$L\in\mathcal F(T)$, i.e. $\Hom_A(T,L)=0$, thus $\operatorname{Hom}_A(P,L)=0$. By the definition of $P$, every composition
factor of $L$ is isomorphic to $S$. Since
$\operatorname{Ext}_A^1(S,S)=0$, we get
$L\in\operatorname{add}S$. Hence,
$\mathcal F(T)=\operatorname{add}S$. The second equality follows from the Brenner--Butler equivalence
\(
\operatorname{Ext}_A^1(T,-):
\mathcal F(T)\xrightarrow{\sim}\mathcal X(T).
\)

\end{proof}

\begin{proposition}\label{phi-surjective-bb}
Let $T=\tau_A^{-1}S\oplus P$ be a splitting BB-tilting module and
$B=\operatorname{End}_A T$. Let $W\oplus E$ be a basic
$\tau_{B,2}$-tilting module with
\(
W\in\mathcal Y(T)\), \( E\in\mathcal X(T)\). Assume  that
\(\operatorname{Hom}_B(W,\tau_BW)=0.\)
Then there exists $[M,N]\in\mathfrak D_{\tau_2}(A,T)$ with
$M\in\mathcal T(T)$, $N\in\mathcal F(T)$ such that
\(
\Phi([M,N])=[W\oplus E].
\)
\end{proposition}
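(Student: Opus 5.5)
By Lemma \ref{unique}, the basic module $E\in\mathcal X(T)$ is either $0$ or $\operatorname{Ext}_A^1(T,S)$. I would therefore set $N:=0$ in the first case and $N:=S$ in the second, so that $N\in\mathcal F(T)$ and $\operatorname{Ext}_A^1(T,N)\cong E$. On the other side I would put $X:=W\otimes_BT\in\mathcal T(T)$, so that $W\cong\operatorname{Hom}_A(T,X)$ by Theorem \ref{BB}. Then $X$ is basic, and $|X|=|W|$.

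Whatever $M$ we choose, the conditions of $\mathfrak D_{\tau_2}(A,T)$ that involve only $X$ come directly from earlier results.
\begin{itemize}
\item Since $W\oplus E$ is $\tau_{B,2}$-tilting, $\operatorname{Hom}_B(W,\tau_{B,2}W)=0$. Proposition \ref{sharp-transfer} (equivalently Lemma \ref{3.1}(2)) then gives $\operatorname{Hom}_A(X,\tau_{A,2}X)=0$.
\item The hypothesis $\operatorname{Hom}_B(W,\tau_BW)=0$ and Lemma \ref{3.1}(1) give $\operatorname{Hom}_A(X,\tau_AX)=0$, so $X$ is $\tau$-rigid.
\item The condition $\operatorname{Hom}_B(W,\tau_{B,2}E)=0$ and Lemma \ref{3.1}(5) give $\operatorname{Hom}_A(X,\tau_AS)=0$ when $N=S$.
\end{itemize}
The counting condition also works out. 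Since $|W|+|E|=|B|=|A|$, we get $|X|+|N|=|A|$.

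\emph{Case $E=0$.} I would take $M:=X$ and $N:=0$. Then $\operatorname{Tr}_MN=0$, the module $M$ is $\tau$-tilting with $|M|=|A|$, and every condition holds trivially. Theorem \ref{3.3} then gives $\Phi([M,0])=[W]$.

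\emph{Case $E\neq0$.} Here $X\oplus S$ need not be $\tau$-rigid. By Lemma \ref{tau-gen}, $\operatorname{Hom}_A(S,\tau_AX)=0$ is equivalent to $\operatorname{Ext}_A^1(X,S)=0$, because $\operatorname{Gen}S=\operatorname{add}S$. This Ext group need not vanish: Lemma \ref{xt}(3) only controls $\operatorname{Ext}_A^2(X,S)$, which is zero anyway since $\operatorname{id}_AS\le1$. So I would build $M$ by a Bongartz-type universal extension. Choose a basis of $\operatorname{Ext}_A^1(X,S)$ as a module over $\operatorname{End}_A(S)$, and form
\[
0\longrightarrow S^{\oplus d}\longrightarrow M\longrightarrow X\longrightarrow0.
\]
The long exact sequence then gives $\operatorname{Ext}_A^1(M,S)=0$. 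The remaining steps are as follows.
\begin{itemize}
\item Show that $M\oplus S$ is $\tau$-rigid by checking $\operatorname{Ext}_A^1(M\oplus S,\operatorname{Gen}(M\oplus S))=0$. This uses the rigidity of $X$, the vanishing $\operatorname{Ext}^1_A(S,S)=0$ and $\operatorname{Hom}_A(X,\tau_AS)=0$ from above, and the standard Bongartz argument.
\item Show that the indecomposable summands of $M$ correspond bijectively to those of $X$, with no summand lying in $\operatorname{add}S=\operatorname{Gen}S$. This gives $|M\oplus S|=|A|$.
\item Identify $\operatorname{Tr}_MS$ with the image of $S^{\oplus d}$, so that $M/\operatorname{Tr}_MS\cong X$. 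By Lemma \ref{3.2}(1), this amounts to $\operatorname{Hom}_A(S,X)$ being absorbed into $S^{\oplus d}$. Where $\operatorname{Hom}_A(S,X)\neq0$, I would instead replace $X$ by a suitable extension in order to adjust the construction.
\end{itemize}

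The main obstacle is the last step: showing that $\operatorname{Tr}_MS$ is exactly the image of $S^{\oplus d}$, which also decides whether $M\in\mathcal T(T)$. For the latter, the connecting map $\operatorname{Hom}_A(T,X)\to\operatorname{Ext}_A^1(T,S^{\oplus d})$ must be surjective. I would first try to prove this by applying $\operatorname{Hom}_A(T,-)$ to the universal sequence and using Lemma \ref{xt}(4) to compare $\operatorname{Ext}^1_B(E,W)\cong\operatorname{Hom}_A(S,X)$. This Ext group vanishes by the rigidity of $W\oplus E$, so $\operatorname{Hom}_A(S,X)=0$. That vanishing would give $\operatorname{Tr}_MS=S^{\oplus d}$ at once, since any map from $S$ to $M$ composed to $X$ is zero and so factors through $S^{\oplus d}$.

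Once these are settled, $[M,S]\in\mathfrak D_{\tau_2}(A,T)$, and Theorem \ref{3.3} gives $\Phi([M,S])=[\operatorname{Hom}_A(T,X)\oplus\operatorname{Ext}_A^1(T,S)]=[W\oplus E]$.
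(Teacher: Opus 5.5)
\textbf{Verdict: right construction, but a genuine gap.} Your construction is the paper's. You use the same reduction to $E\in\{0,\operatorname{Ext}_A^1(T,S)\}$, the same transfer of conditions through Lemma \ref{3.1}, and the same universal extension $0\to S^{\oplus d}\to M\to X\to 0$ (the paper forms it summand by summand). Your identity $\operatorname{Hom}_A(S,X)\cong\operatorname{Ext}_B^1(E,W)=0$, with its consequence $\operatorname{Tr}_MS=S^{\oplus d}$, is exactly the paper's, and your $\tau$-rigidity sketch uses the right ingredients.

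\textbf{The step you call the main obstacle is not resolved.} The statement demands $M\in\mathcal T(T)$. Since $\mathcal F(T)=\operatorname{add}S$, this is equivalent to $\operatorname{Hom}_A(M,S)=0$, a condition on maps \emph{out of} $M$. The vanishing $\operatorname{Hom}_A(S,X)=0$ and the trace identification only control maps \emph{into} $M$, so they cannot give it. For instance, a universal extension built from a generating set instead of a basis has the same trace quotient $X$ but splits off copies of $S$. The missing input is a dimension count. Apply $\operatorname{Hom}_A(-,S)$ to your sequence, and use $\operatorname{Hom}_A(X,S)=0$ (as $X\in\mathcal T(T)$, $S\in\mathcal F(T)$) and $\operatorname{Ext}_A^1(S,S)=0$. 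This gives the exact sequence
\[0\to\operatorname{Hom}_A(M,S)\to\operatorname{Hom}_A(S^{\oplus d},S)\xrightarrow{\ \delta\ }\operatorname{Ext}_A^1(X,S)\to\operatorname{Ext}_A^1(M,S)\to0.\]
Because you chose a basis over $\operatorname{End}_A(S)$, $\delta$ is a surjection between spaces of the same $k$-dimension, hence bijective. So $\operatorname{Hom}_A(M,S)=0=\operatorname{Ext}_A^1(M,S)$. This yields $M\in\mathcal T(T)$, which is equivalent to the surjectivity of $\operatorname{Hom}_A(T,X)\to\operatorname{Ext}_A^1(T,S^{\oplus d})$ that you were after.

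\textbf{Your second bullet is only asserted.} It needs the same two vanishings, and basicness of $M$ is what makes $|M\oplus S|=|A|$ and lets you invoke Theorem \ref{3.3}; the paper proves it explicitly. Write $X=\bigoplus_iX_i$ and take the summand-wise extensions $0\to S^{\oplus r_i}\to M_i\xrightarrow{g_i}X_i\to0$.
\begin{enumerate}
\item Since $\operatorname{Hom}_A(S,X)=0$, we get $\operatorname{Hom}_A(X_i,X_j)\cong\operatorname{Hom}_A(M_i,X_j)$. So each $\phi\in\operatorname{End}_A(M_i)$ induces some $\psi\in\operatorname{End}_A(X_i)$ with $\psi g_i=g_i\phi$.
\item If $\psi$ is invertible, $\operatorname{Ext}_A^1(M_i,S)=0$ lets you lift $\psi^{-1}g_i$ to some $\phi'$. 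Then $\phi'\phi-1$ and $\phi\phi'-1$ factor through $S^{\oplus r_i}$, hence vanish because $\operatorname{Hom}_A(M_i,S)=0$.
\item If $\psi$ is nilpotent, a power of $\phi$ factors through $S^{\oplus r_i}$ and vanishes for the same reason.
\end{enumerate}
Hence each $M_i$ is indecomposable. A snake-lemma argument using $\operatorname{Hom}_A(S,X)=0$ then shows that $M_i\cong M_j$ forces $X_i\cong X_j$.

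Finally, drop your fallback of replacing $X$ by an extension when $\operatorname{Hom}_A(S,X)\neq0$. That case never occurs.
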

\begin{proof}
By Lemma \ref{unique}, $\mathcal X(T)
=\operatorname{add}\operatorname{Ext}_A^1(T,S)$. Since $E$ is basic, we conclude that
either $E=0$ or $E\cong\operatorname{Ext}_A^1(T,S)$.

Suppose that $E=0$. By the Brenner--Butler equivalence, there
exists a basic module $M\in\mathcal T(T)$ such that
$W\cong\operatorname{Hom}_A(T,M)$. The equivalence preserves
indecomposable summands, so $|M|=|W|=|B|=|A|$. By Lemma \ref{3.1}(1), we get
$\operatorname{Hom}_A(M,\tau_AM)\cong\operatorname{Hom}_B(W,\tau_BW)=0$, i.e. $M$ is $\tau_A$-tilting. Since in this case $\operatorname{Hom}_A(T,M)$ is $\tau_{B,2}$-tilting, by Lemma \ref{3.1}(2), we get $\operatorname{Hom}_A(M,\tau_{A,2}M)=0$. Thus, we obtain that $[M,0]\in\mathfrak D_{\tau_2}(A,T)$,
and $\Phi([M,0])=[W]$.

Suppose that $E\cong\operatorname{Ext}_A^1(T,S)$. Again, the Brenner--Butler
equivalence gives a basic $M\in\mathcal T(T)$ such that
$W\cong\operatorname{Hom}_A(T,M)$. Moreover, we have
\begin{equation}\label{zhxgss}|M\oplus S|=|W\oplus E|=|B|=|A|.\end{equation}
The hypotheses on $W\oplus E$ imply
\begin{equation*}
\begin{split}
&\operatorname{Hom}_B(\operatorname{Hom}_A(T,M),\tau_B \operatorname{Hom}_A(T,M))=0,\\
&\operatorname{Hom}_B(\operatorname{Hom}_A(T,M),\tau_{B,2} \operatorname{Hom}_A(T,M))=0,\\
&\operatorname{Hom}_B(\operatorname{Hom}_A(T,M),\tau_{B,2} \operatorname{Ext}_A^1(T,S))=0,\\
&\operatorname{Ext}_B^1(\operatorname{Ext}_A^1(T,S),\operatorname{Hom}_A(T,M))=0.
\end{split}
\end{equation*}
By Lemma \ref{xt} and Lemma \ref{3.1}, we have
\begin{equation}\label{5=0}
\begin{split}
&\operatorname{Hom}_A(M,\tau_A M)=0,~
\operatorname{Hom}_A(M,\tau_{A,2} M)=0,\\
&\operatorname{Hom}_A(M,\tau_A S)=0,~
\operatorname{Hom}_A(S,M)=0.
\end{split}
\end{equation}

Assume that $\operatorname{Hom}_A(S,\tau_AM)=0$. Since $\Hom_A(S,\tau_AS)=0$, by \eqref{5=0}, we get $M\oplus S$ is $\tau_A$-rigid. By \eqref{zhxgss}, we conclude that $M\oplus S$ is $\tau_A$-tilting. Since $\operatorname{Hom}_A(S,M)=0$, we get
$\operatorname{Tr}_MS=0$ and $X_{M,S}=M\in\mathcal T(T)$. Since
$\operatorname{Gen}S=\operatorname{add}S$ and
$\operatorname{Hom}_A(S,M)=0$, no nonzero indecomposable summand of
$M$ lies in $\operatorname{Gen}S$. Thus,
$[M,S]\in\mathfrak D_{\tau_2}(A,T)$ and
\[
\Phi([M,S])
=\bigl[\operatorname{Hom}_A(T,M)\oplus\operatorname{Ext}_A^1(T,S)\bigr]
=[W\oplus E].
\]
Hence, the assertion holds in this case.

Now, assume that $\operatorname{Hom}_A(S,\tau_AM)\neq0$. Since
$\operatorname{Gen}S=\operatorname{add}S$, by Lemma \ref{tau-gen}, we have
$\operatorname{Ext}_A^1(M,S)\neq0$. Write
$M=\bigoplus_{i=1}^nM_i$, where the $M_i$ are pairwise non-isomorphic and indecomposable.
For each $1\leq i\leq n$, since $\operatorname{End}_A(S)$ is a division algebra, choose a basis
$e_{i1},\ldots,e_{ir_i}$ of $\operatorname{Ext}_A^1(M_i,S)$ viewed as a left $\operatorname{End}_A(S)$-module defined by pushouts. Consider the following exact sequence
\begin{equation}\label{sss}
0\longrightarrow S^{\oplus r_i}
\longrightarrow M_i'\longrightarrow M_i\longrightarrow0
\end{equation}
such that the connecting morphism
$\delta_i:\operatorname{Hom}_A(S^{\oplus r_i},S)
\longrightarrow\operatorname{Ext}_A^1(M_i,S)$
is surjective.
Set $r:=\sum_{i=1}^nr_i$ and $M':=\bigoplus_{i=1}^nM_i'$.
Taking the direct sum of the exact sequences \eqref{sss}, we obtain the exact sequence
\begin{equation}\label{tr}
0 \longrightarrow S^{\oplus r} \longrightarrow M'\longrightarrow M \longrightarrow 0.
\end{equation}
Note that $\operatorname{Hom}_A(M,S)=0$ since $M\in\mathcal T(T)$ and $S\in\mathcal F(T)$. Applying $\operatorname{Hom}_A(-,S)$ to \eqref{tr}, we get the exact sequence
\[\xymatrix{0\ar[r]&
\operatorname{Hom}_A(M',S)\ar[r]&
\operatorname{Hom}_A(S^{\oplus r},S)
\ar[r]^-\delta&\operatorname{Ext}_A^1(M,S)
\ar[r]&\operatorname{Ext}_A^1(M',S)\ar[r]&0,}\]
where $\delta=\bigoplus_{i=1}^n\delta_i$ is surjective.
Thus, we get $\operatorname{Ext}_A^1(M',S)=0$. Noting that $${\rm dim}_k\operatorname{Hom}_A(S^{\oplus r},S)={\rm dim}_k\operatorname{Ext}_A^1(M,S)=rd_S,$$ where $d_S={\rm dim}_k\operatorname{End}_A(S)$, we obtain $\operatorname{Hom}_A(M',S)=0$. Since $\mathcal F(T)=\operatorname{add}S$, we have $M'\in\mathcal T(T)$. Applying $\operatorname{Hom}_A(T,-)$ to \eqref{tr}, we get the exact sequence
\begin{equation}\label{mms}
0 \longrightarrow \operatorname{Hom}_A(T,M') \longrightarrow \operatorname{Hom}_A(T,M) \longrightarrow {\operatorname{Ext}^1_A(T,S^{\oplus r})} \longrightarrow 0.
\end{equation}
Let $L\in\operatorname{Gen}M$. Then $L\in\mathcal T(T)$. Applying
$\operatorname{Hom}_B(-,\operatorname{Hom}_A(T,L))$ to \eqref{mms},
we get the exact sequence
\[
\begin{split}
\operatorname{Ext}_B^1\bigl(\operatorname{Hom}_A(T,M),
  \operatorname{Hom}_A(T,L)\bigr)
&\longrightarrow
\operatorname{Ext}_B^1\bigl(\operatorname{Hom}_A(T,M'),
  \operatorname{Hom}_A(T,L)\bigr)\\
&\longrightarrow
\operatorname{Ext}_B^2\bigl(\operatorname{Ext}_A^1(T,S^{\oplus r}),
  \operatorname{Hom}_A(T,L)\bigr).
\end{split}
\]
By Lemma \ref{xt}(1) and (4), we have the exact sequence
\[
\operatorname{Ext}_A^1(M,L)
\longrightarrow
\operatorname{Ext}_A^1(M',L)
\longrightarrow
\operatorname{Ext}_A^1(S^{\oplus r},L).
\]
The first and third terms vanish by \eqref{5=0} and
Lemma \ref{tau-gen}. Thus
$\operatorname{Ext}_A^1(M',L)=0$. Since $L$ was arbitrary,
we get $\operatorname{Ext}_A^1(M',\operatorname{Gen}M)=0$ and then
$\operatorname{Hom}_A(M,\tau_AM')=0$.
Applying $\operatorname{Hom}_A(-,\tau_A S)$ to \eqref{tr}, we get the exact sequence
\begin{equation*}
\operatorname{Hom}_A(M,\tau_A S) \longrightarrow \operatorname{Hom}_A(M',\tau_A S) \longrightarrow \operatorname{Hom}_A(S^{\oplus r},\tau_A S).
\end{equation*}
Noting that $\operatorname{Hom}_A(M,\tau_AS)=0$ and
$\operatorname{Hom}_A(S,\tau_AS)=0$, we get $\operatorname{Hom}_A(M',\tau_AS)=0$.
Similarly, applying $\operatorname{Hom}_A(-,\tau_AM')$ to \eqref{tr}, we have the exact sequence
\[
\operatorname{Hom}_A(M,\tau_AM')
\longrightarrow
\operatorname{Hom}_A(M',\tau_AM')
\longrightarrow
\operatorname{Hom}_A(S^{\oplus r},\tau_AM').
\]
Since
$\operatorname{Ext}_A^1(M',S)=0$ and
$\operatorname{Gen}S=\operatorname{add}S$, we get
$\operatorname{Ext}_A^1(M',\operatorname{Gen}S)=0$. By Lemma \ref{tau-gen}, we have
$\operatorname{Hom}_A(S,\tau_AM')=0$. It follows that
$\operatorname{Hom}_A(M',\tau_AM')=0$. Hence, we conclude that
$\operatorname{Hom}_A(M'\oplus S,\tau_A(M'\oplus S))=0$.

In what follows, it remains to prove that $M'$ is basic, i.e. the $M_i'$ are
pairwise non-isomorphic and indecomposable.

For any $1\leq j\leq n$, applying $\operatorname{Hom}_A(-,M_j)$ to the exact sequence \eqref{sss}, we obtain the exact sequence
\begin{equation*}
0 \longrightarrow \operatorname{Hom}_A(M_i,M_j) \longrightarrow \operatorname{Hom}_A(M_i',M_j)\longrightarrow \operatorname{Hom}_A(S^{\oplus r_i},M_j).
\end{equation*}
Since $\operatorname{Hom}_A(S,M)=0$, we get $\operatorname{Hom}_A(M_i,M_j) \cong \operatorname{Hom}_A(M_i',M_j)$.

Next, let us prove that $M_i'$ is indecomposable. In fact, for any
$\phi\in\operatorname{End}_A(M_i')$,
the isomorphism
$\operatorname{Hom}_A(M_i,M_i)
\xrightarrow{\sim}
\operatorname{Hom}_A(M_i',M_i)$
gives $\psi\in\operatorname{End}_A(M_i)$ such that
$\psi g_i=g_i\phi$, i.e. we have the following commutative diagram
\begin{equation*}
\begin{tikzcd}
	0 & {S^{\,\oplus\,r_i}} & {M_i^{\,\prime}} & {M_i} & 0 \\
	&& {M_i^{\,\prime}} & {M_i}
	\arrow[from=1-1, to=1-2]
	\arrow["{f_i}", from=1-2, to=1-3]
	\arrow["{g_i}", from=1-3, to=1-4]
	\arrow["\phi"', from=1-3, to=2-3]
	\arrow[from=1-4, to=1-5]
	\arrow["\psi", dashed, from=1-4, to=2-4]
	\arrow["{g_i}"', from=2-3, to=2-4]
\end{tikzcd}
\end{equation*}
Since $M_i$ is indecomposable, $\psi$ is either
invertible or nilpotent. Since $\operatorname{Hom}_A(M',S)=0$ and $\operatorname{Ext}_A^1(M',S)=0$, we have
$\operatorname{Hom}_A(M_i',S^{\oplus r_i})=0$ and $\operatorname{Ext}_A^1(M_i',S^{\oplus r_i})=0$.
Suppose $\psi$ is invertible. Applying $\Hom_A(M_i',-)$ to the exact sequence \eqref{sss}, we obtain that there exists $\phi'\in\operatorname{End}_A(M_i')$ such that $g_i\phi'=\psi^{-1}g_i$. Then we get
$\phi'\phi-\operatorname{id}_{M_i'}$ and $\phi\phi'-\operatorname{id}_{M_i'}$ factor through
$S^{\oplus r_i}$, and then equal to zero. Hence, $\phi$
is invertible.
Suppose that $\psi$ is nilpotent. Then some power of $\phi$
factors through $S^{\oplus r_i}$ and thus is zero, i.e. $\phi$ is nilpotent. Hence, every
endomorphism of $M_i'$ is invertible or nilpotent, it follows that $M_i'$ is indecomposable.

Assume that there exists an isomorphism $\phi:M_i'\to M_j'$ for some $i\ne j$. Uisng the isomorphism $\operatorname{Hom}_A(M_i,M_j) \cong \operatorname{Hom}_A(M_i',M_j)$, there is a morphism $\psi$ such that the right square of the following diagram is commutative
 \begin{equation}\label{mij}
\begin{tikzcd}
	0 & {S^{\,\oplus\,r_i}} & {M_i^{\,\prime}} & {M_i} & 0 \\
	0 & {S^{\,\oplus\,r_j}} & {M_j^{\,\prime}} & {M_j} & 0
	\arrow[from=1-1, to=1-2]
	\arrow["{f_i}", from=1-2, to=1-3]
	\arrow["\varphi"', dashed, from=1-2, to=2-2]
	\arrow["{g_i}", from=1-3, to=1-4]
	\arrow["\phi"', from=1-3, to=2-3]
	\arrow[from=1-4, to=1-5]
	\arrow["\psi", dashed, from=1-4, to=2-4]
	\arrow[from=2-1, to=2-2]
	\arrow["{f_j}"', from=2-2, to=2-3]
	\arrow["{g_j}"', from=2-3, to=2-4]
	\arrow[from=2-4, to=2-5]
\end{tikzcd}
\end{equation}
Then by the universal property of the kernel, there is a morphism $\varphi$ such that the left square in \eqref{mij} is commutative. By the snake lemma, we obtain $\psi$ is surjective, and $\operatorname{ker}\psi \cong \operatorname{coker}\varphi \in \operatorname{Gen}S=\operatorname{add}S$. Since $\operatorname{Hom}_A(S,M)=0$, we conclude that $\operatorname{ker}\psi=0$, and then $\psi$ is an isomorphism. This contradicts the fact that $M$ is basic. Hence, $M'$ is basic, and then
$|M' \oplus S|=|M\oplus S|=|W \oplus E|=|B|=|A|$.
Thus, $M'\oplus S$ is $\tau_A$-tilting.
Noting that $\operatorname{Hom}_A(S,M)=0$, i.e. $M\in\mathcal F(S)$, and $S^{\oplus r}\in\operatorname{Gen}S$, we conclude that
the exact sequence \eqref{tr} is the canonical decomposition of $M'$ with
respect to the torsion pair $(\operatorname{Gen}S,\mathcal F(S))$. By Lemma \ref{3.2}(1),
\(
\operatorname{Tr}_{M'}S\cong S^{\oplus r}\) and
 \(M'/\operatorname{Tr}_{M'}S\cong M.
\)
By \eqref{5=0}, we have
$\operatorname{Hom}_A\bigl(M'/\operatorname{Tr}_{M'}S,
 \tau_{A,2}(M'/\operatorname{Tr}_{M'}S)\bigr)=0.$
Moreover, since $\operatorname{Hom}_A(M',S)=0$ and
$\operatorname{Gen}S=\operatorname{add}S$, no nonzero
indecomposable direct summand of $M'$ lies in $\operatorname{Gen}S$.
Consequently,
$[M',S]\in\mathfrak D_{\tau_2}(A,T)$ and
\[
\Phi([M',S])
=\bigl[\operatorname{Hom}_A(T,M)\oplus\operatorname{Ext}_A^1(T,S)\bigr]
=[W\oplus E].
\]
Therefore, we complete the proof.
\end{proof}
\begin{corollary}\label{phi-bijection-bb}
\textup{(\cite[Proposition 3.3]{PENG})}
Let $A$ be a hereditary algebra, let
$T=\tau_A^{-1}S\oplus P$ be a BB-tilting module, and
$B=\operatorname{End}_A T$. Set
\[
\mathfrak D_1(A,T):=
\bigl\{[M,N]\ \bigm|\ M\in\mathcal T(T),\
N\in\mathcal F(T),\
M\oplus N\text{ is a basic }1\text{-tilting }A\text{-module}\bigr\}
\]
and
\[
\mathfrak T_2(B;T):=
\bigl\{[W\oplus E]\ \bigm|\
W\in\mathcal Y(T),\ E\in\mathcal X(T),\
W\oplus E\text{ is a basic }2\text{-tilting }B\text{-module}\bigr\}.
\]
Then the restriction map
$\Phi:\mathfrak D_1(A,T)\longrightarrow\mathfrak T_2(B;T)$
is a bijection.
\end{corollary}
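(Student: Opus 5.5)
The plan is to reduce the corollary to the general results of this section, namely Corollary \ref{3.4}, Proposition \ref{phi-injective} and Proposition \ref{phi-surjective-bb}. This requires checking that, when $A$ is hereditary, the restriction of $\Phi$ to $\mathfrak D_1(A,T)$ is defined and takes values in $\mathfrak T_2(B;T)$, and that the pair produced by Proposition \ref{phi-surjective-bb} lies in $\mathfrak D_1(A,T)$.

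\textbf{Step 1: the hereditary hypothesis makes the side conditions automatic.}
\begin{enumerate}
\item[(a)] Since $A$ is hereditary, $\operatorname{id}_A L\le 1$ for every $A$-module $L$. By Lemma \ref{splitting}(4), $T$ is a splitting tilting module, so $(A,T,B)$ is a splitting tilting triple.
\item[(b)] Every syzygy over $A$ is projective, so $\tau_{A,2}X=\tau_A\Omega_A X=0$ for every $X$. Hence the condition $\operatorname{Hom}_A(X,\tau_{A,2}X)=0$ always holds.
\item[(c)] Every module has projective dimension at most $1$, so the Auslander--Reiten formula gives $\operatorname{Hom}_A(M,\tau_AM)\cong \mathbb D\operatorname{Ext}^1_A(M,M)$. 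Thus $\tau$-rigid means rigid, and a basic $\tau_A$-tilting module is exactly a basic $1$-tilting module, since condition (T3) follows from $|M|=|A|$ by Bongartz's completion.
\end{enumerate}

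\textbf{Step 2: $\mathfrak D_1(A,T)\subseteq\mathfrak D_{\tau_2}(A,T)$ and the map is well defined.} Let $[M,N]\in\mathfrak D_1(A,T)$.
\begin{enumerate}
\item[(a)] Since $\mathcal T(T)$ is closed under quotients, $X_{M,N}=M/\operatorname{Tr}_MN\in\mathcal T(T)$.
\item[(b)] By Lemma \ref{unique}, $N\in\mathcal F(T)=\operatorname{add}S$, so $\operatorname{Gen}N\subseteq\operatorname{add}S\subseteq\mathcal F(T)$. An indecomposable summand of $M$ lying in $\operatorname{Gen}N$ would therefore lie in $\mathcal T(T)\cap\mathcal F(T)=0$. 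So no nonzero indecomposable summand of $M$ lies in $\operatorname{Gen}N$.
\item[(c)] With Step 1, this gives $[M,N]\in\mathfrak D_{\tau_2}(A,T)$.
\item[(d)] Corollary \ref{3.4} shows that $\Phi([M,N])$ is a basic $2$-tilting $B$-module. Its two summands lie in $\mathcal Y(T)$ and $\mathcal X(T)$ respectively, so $\Phi([M,N])\in\mathfrak T_2(B;T)$.
\end{enumerate}
Injectivity is then the restriction of Proposition \ref{phi-injective}.

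\textbf{Step 3: surjectivity.} Let $W\oplus E$ be a basic $2$-tilting $B$-module with $W\in\mathcal Y(T)$ and $E\in\mathcal X(T)$.
\begin{enumerate}
\item[(a)] $W\oplus E$ is basic $\tau_{B,2}$-tilting, by Lemma \ref{tau-gen} with $\operatorname{Ext}^2_B=0$ and the count of summands.
\item[(b)] To verify the extra hypothesis $\operatorname{Hom}_B(W,\tau_BW)=0$, write $W\cong\operatorname{Hom}_A(T,M)$ with $M\in\mathcal T(T)$.
\item[(c)] Lemma \ref{3.1}(1), the hereditary AR formula and Lemma \ref{xt}(1) give
\[
\operatorname{Hom}_B(W,\tau_BW)\cong\operatorname{Hom}_A(M,\tau_AM)\cong\mathbb D\operatorname{Ext}^1_A(M,M)\cong\mathbb D\operatorname{Ext}^1_B(W,W)=0 .
\]
\item[(d)] Proposition \ref{phi-surjective-bb} now produces $[M',N']\in\mathfrak D_{\tau_2}(A,T)$ with $M'\in\mathcal T(T)$, $N'\in\mathcal F(T)$ and $\Phi([M',N'])=[W\oplus E]$.
\item[(e)] Since $M'\oplus N'$ is basic $\tau_A$-tilting, Step 1(c) shows it is basic $1$-tilting, so $[M',N']\in\mathfrak D_1(A,T)$.
\end{enumerate}

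The main point needing care is this last membership: $\mathfrak D_1(A,T)$ requires the first component itself to lie in $\mathcal T(T)$, not merely $X_{M,N}$. This holds because Proposition \ref{phi-surjective-bb} explicitly provides $M'\in\mathcal T(T)$. In its second case this is the module $M'$ with $\operatorname{Hom}_A(M',S)=0$ and $\mathcal F(T)=\operatorname{add}S$. The remaining checks are routine.
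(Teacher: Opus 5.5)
Your proposal is correct and follows the paper's proof. You embed $\mathfrak D_1(A,T)$ into $\mathfrak D_{\tau_2}(A,T)$ using heredity ($\tau_{A,2}=0$, $\mathcal F(T)=\operatorname{add}S$, basic $\tau_A$-tilting $=$ basic $1$-tilting), get well-definedness and injectivity from Corollary~\ref{3.4} and Proposition~\ref{phi-injective}, and obtain surjectivity from Proposition~\ref{phi-surjective-bb} after checking that $W\oplus E$ is $\tau_{B,2}$-tilting with $\operatorname{Hom}_B(W,\tau_BW)=0$.

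There are two small differences. First, you verify $\operatorname{Hom}_B(W,\tau_BW)=0$ by transporting to $A$ via Lemma~\ref{3.1}(1), the hereditary Auslander--Reiten formula and Lemma~\ref{xt}(1), whereas the paper uses $\operatorname{pd}_BW\le\operatorname{pd}_AM_0\le1$ and Lemma~\ref{tau-gen} over $B$. Second, your ``$\operatorname{Ext}^2_B=0$'' in Step 3(a) should read $\operatorname{Ext}^2_B(W\oplus E,\operatorname{Gen}(W\oplus E))=0$; this follows from $\operatorname{Ext}^2_B(W\oplus E,W\oplus E)=0$ and $\operatorname{pd}_B(W\oplus E)\le2$ by right exactness of $\operatorname{Ext}^2_B(W\oplus E,-)$.
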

\begin{proof}
Since $A$ is hereditary, the tilting triple $(A,T,B)$ is splitting.
By Lemma \ref{unique}, $\mathcal F(T)=\operatorname{add}S$. Take any
$[M,N]\in\mathfrak D_1(A,T)$. Note that each classical $1$-tilting module is
$\tau_A$-tilting. Since $\mathcal T(T)$ is closed under quotients,
$X:=M/\operatorname{Tr}_M N\in\mathcal T(T)$. Moreover, since
$\operatorname{Gen}N\subseteq\mathcal F(T)$ and every direct summand
of $M$ belongs to $\mathcal T(T)$, we conclude that no nonzero indecomposable
direct summand of $M$ lies in $\operatorname{Gen}N$. Finally, since
$A$ is hereditary, $\Omega_A X$ is projective and then
$\tau_{A,2}X=\tau_A\Omega_A X=0$. Hence, we have
$\mathfrak D_1(A,T)\subseteq\mathfrak D_{\tau_2}(A,T)$.
By Corollary \ref{3.4}, the  restriction map $\Phi$ is well-defined, and Proposition \ref{phi-injective} shows that it is injective.

It remains to prove it is surjective. Let
$[W\oplus E]\in\mathfrak T_2(B;T)$ and set $H:=W\oplus E$.
Then $\operatorname{Ext}_B^1(H,H)=0$ and $|H|=|B|$.
For any module $L\in\operatorname{Gen}H$, we have a short exact sequence
$0\to K\to H^{\oplus m}\to L\to 0$. Applying $\Hom_A(H,-)$ to it, we get $\operatorname{Ext}_B^2(H,L)=0$, since
$\operatorname{pd}_B H\leq2$ and
$\operatorname{Ext}_B^2(H,H)=0$. Thus, by Lemma \ref{tau-gen}, we obtain
$\operatorname{Hom}_B(H,\tau_{B,2}H)=0$. Hence, $H$ is $\tau_{B,2}$-tilting.
Moreover, by Theorem \ref{BB}, there exists $M_0\in\mathcal T(T)$ such that
$W\cong\operatorname{Hom}_A(T,M_0)$. By
\cite[VI, Lemma 4.1]{ASS},
$\operatorname{pd}_B W\leq\operatorname{pd}_A M_0\leq1$. Then by
$\operatorname{Ext}_B^1(W,W)=0$, we easily get
$\operatorname{Ext}_B^1(W,\operatorname{Gen}W)=0$. Thus, by Lemma \ref{tau-gen}, we have
$\operatorname{Hom}_B(W,\tau_BW)=0$. Using Proposition \ref{phi-surjective-bb}, we conclude that there exists
$[M,N]\in\mathfrak D_{\tau_2}(A,T)$ with
$M\in\mathcal T(T)$, $N\in\mathcal F(T)$ such that
$\Phi([M,N])=[W\oplus E]$. Since $A$ is hereditary,
each $\tau_A$-tilting module is a classical
$1$-tilting module. Hence, we have $[M,N]\in\mathfrak D_1(A,T)$, and complete the proof.
\end{proof}

\section{Comparisons with mutations of silting complexes}
In this section, we study the relationship between Corollary \ref{3.4} and mutations of silting complexes. Let $A$ be a finite-dimensional algebra. For any \( \boldsymbol{X} \in K^b(\operatorname{proj} A)\), denote by $\operatorname{thick} \boldsymbol{X}$ the smallest full subcategory of $K^b(\operatorname{proj} A)$ which contains $\boldsymbol{X}$ and is closed under positive and negative shifts, cones, isomorphisms and direct summands; denote by ${\rm H}^i\boldsymbol{X}$ the $i$-th cohomology of $\boldsymbol{X}$ for any $i\in\mathbb{Z}$. If $A$ has finite global dimension, it is well-known that $K^b(\operatorname{proj} A)$ is equivalent to $D^b(A)$ as triangulated categories.

\begin{definition}\label{silting}
Let \( \boldsymbol{X} = (X^i) \in K^b(\operatorname{proj} A) \).
The complex \( \boldsymbol{X} \) is called a {\em silting complex}, if
$\operatorname{Hom}_{K^b(\operatorname{proj} A)}(\boldsymbol{X}, \boldsymbol{X}[i]) = 0$ for any $i > 0$
and
$\operatorname{thick} \boldsymbol{X} = K^b(\operatorname{proj} A).$
The complex \( \boldsymbol{X} \) is called an {\em $n$-term complex}, if
$X^i = 0$ for any $i \notin \{0, \ldots, -(n-1)\}$
and ${\rm H}^{i} \boldsymbol{X} = 0$ for any $i \neq 0, -(n-1)$.
\end{definition}
Let us consider the following order: for any \( \boldsymbol{X}, \boldsymbol{Y} \in  K^b(\operatorname{proj} A) \), define
\begin{equation}\label{dydx}
 \boldsymbol{X} \geq \boldsymbol{Y}~~~~\text{if}~~~~
\Hom_{K^b(\operatorname{proj} A)}(\boldsymbol{X}, \boldsymbol{Y}[i]) = 0~\text{for~any}~ i > 0.\end{equation}
Using this order, we have the following (cf. \cite[Proposition 2.9]{MUC}).
\begin{lemma}\label{bjdx}
For any complex \( \boldsymbol{X} = (X^i) \in K^b(\operatorname{proj} A) \) and any integers $m\leq n$, the following statements are equivalent:
\begin{enumerate}
    \item[(1)] \( X^i = 0 \) for any \( i \notin [m, n] \);
    \item[(2)] \( A[-n] \geq \boldsymbol{X} \geq A[-m] \), where \( A \) is the stalk complex of the regular module \( A \).
\end{enumerate}
\end{lemma}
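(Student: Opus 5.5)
The plan is to prove the two implications separately, working directly with complexes of projectives and the vanishing of morphism spaces in $K^b(\operatorname{proj}A)$. Since $A$ is a stalk complex of a projective module, we have $\Hom_{K^b(\operatorname{proj}A)}(A,\boldsymbol{X}[j])\cong {\rm H}^j\boldsymbol{X}$. Dually, for any complex $\boldsymbol{X}$ of projectives, $\Hom_{K^b(\operatorname{proj}A)}(\boldsymbol{X},A[j])$ is the cohomology of the complex $\Hom_A(\boldsymbol{X},A)$ in degree $j$. This is the cohomology of a complex of projective left modules $\boldsymbol{X}^\ast=\Hom_A(\boldsymbol{X},A)$, with $(\boldsymbol{X}^\ast)^{j}=\Hom_A(X^{-j},A)$. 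Using these two identifications, condition (2) unpacks to:
\begin{itemize}
\item[(a)] ${\rm H}^{j}\boldsymbol{X}=0$ for $j>n$;
\item[(b)] ${\rm H}^{j}(\boldsymbol{X}^\ast)=0$ for $j>-m$.
\end{itemize}
Here (a) comes from $A[-n]\geq\boldsymbol{X}$, i.e.\ $\Hom(A,\boldsymbol{X}[i+n])=0$ for $i>0$. Condition (b) comes from $\boldsymbol{X}\geq A[-m]$, i.e.\ $\Hom(\boldsymbol{X},A[i-m])=0$ for $i>0$.

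For (1)$\Rightarrow$(2) there is nothing more to do. If $X^i=0$ outside $[m,n]$, then $\boldsymbol{X}$ has no cohomology above degree $n$. Likewise $\boldsymbol{X}^\ast$ is concentrated in degrees $[-n,-m]$, so it has no cohomology above degree $-m$.

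For (2)$\Rightarrow$(1), one must interpret (1) up to isomorphism in $K^b(\operatorname{proj}A)$, i.e.\ replace $\boldsymbol{X}$ by a homotopy-minimal representative. Over a finite-dimensional algebra, every object of $K^b(\operatorname{proj}A)$ is isomorphic to a minimal complex, one whose differentials have image in the radical. The reason is that contractible summands $P\xrightarrow{1}P$ split off. So we may assume $\boldsymbol{X}$ is minimal. Let $t$ be the largest degree with $X^t\neq0$ and suppose $t>n$. Then $d^{t-1}:X^{t-1}\to X^t$ has image in $\operatorname{rad}X^t$, so ${\rm H}^t\boldsymbol{X}=X^t/\operatorname{Im}d^{t-1}\neq0$ by Nakayama. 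This contradicts (a). Dually, let $s$ be the smallest degree with $X^s\neq0$ and suppose $s<m$. Then $\boldsymbol{X}^\ast$ is again a minimal complex of projective left $A$-modules, since $\Hom_A(-,A)$ is a duality $\operatorname{proj}A\to\operatorname{proj}A^{\rm op}$ preserving radical maps. Its top nonzero term sits in degree $-s>-m$, so by the same Nakayama argument ${\rm H}^{-s}(\boldsymbol{X}^\ast)\neq0$. This contradicts (b). Hence $X^i=0$ for $i\notin[m,n]$.

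The only delicate step is the reduction to minimal complexes, together with the observation that $\Hom_A(-,A)$ preserves minimality. The rest is just the Nakayama lemma applied at the extreme degrees. Alternatively, one can cite \cite[Proposition 2.9]{MUC}, where this is recorded.
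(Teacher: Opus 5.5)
Your proof is correct. Note, though, that the paper does not prove Lemma~\ref{bjdx} at all. It records the lemma with a pointer to \cite[Proposition 2.9]{MUC}, so your proposal supplies a self-contained argument where the paper relies on a citation.

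Your route is the standard one, and each step checks out:
\begin{enumerate}
\item[(i)] You translate $A[-n]\geq\boldsymbol{X}$ into ${\rm H}^j\boldsymbol{X}=0$ for $j>n$.
\item[(ii)] You translate $\boldsymbol{X}\geq A[-m]$ into ${\rm H}^j\Hom_A(\boldsymbol{X},A)=0$ for $j>-m$.
\item[(iii)] You pass to a minimal (radical) representative.
\item[(iv)] You apply Nakayama at the extreme nonzero degrees of $\boldsymbol{X}$ and of its dual.
\end{enumerate}

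Compared with the bare citation, your argument buys two things.
\begin{itemize}
\item It makes explicit that (1) has to be read up to isomorphism in $K^b(\operatorname{proj}A)$. A contractible summand $P\xrightarrow{1}P$ placed outside $[m,n]$ satisfies (2) but not the literal (1), which the paper's wording glosses over.
\item It proves the sharper statement that any minimal representative is itself concentrated in $[m,n]$. This up-to-isomorphism form is exactly what Claim~1 in the proof of Theorem~\ref{main1} needs, where $\boldsymbol{W}$ is only determined up to isomorphism.
\end{itemize}

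One step deserves a sentence more: why $\Hom_A(-,A)$ preserves minimality. A map between finitely generated projectives has image in the radical if and only if no component between indecomposable summands is an isomorphism. That condition is visibly preserved by the duality $\operatorname{proj}A\to\operatorname{proj}A^{\rm op}$.
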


Let $\mathcal{A}$ be a category. A morphism $f:X \to Y$ in $\mathcal{A}$ is called {\em right minimal}, if every morphism $g:X\to X$ such that $fg=f$ is an automorphism.
We remark that a morphism $f:X \to Y$ in a Krull--Schmidt category is right minimal if and only if for any nonzero direct summand \( X' \) of \( X \), \( f|_{X'} \neq 0 \) (cf. \cite{BIA}). For a subcategory $\mathcal{X}$ of $\mathcal{A}$, the morphism $f:X \to Y$ is called a {\em right $\mathcal{X}$-approximation} of $Y$ if $X\in\mathcal{X}$ and $\Hom_{\mathcal{A}}(X',f)$ is surjective for any $X'\in \mathcal{X}$; it is called a {\em minimal right $\mathcal{X}$-approximation} of $Y$ if $f$ is both right minimal and a right $\mathcal{X}$-approximation of $Y$.

Now, let us recall the mutation of silting complexes (cf. \cite{AIHA,SOT}).
\begin{proposition}\label{mutation}
Let $\boldsymbol{X} \oplus \boldsymbol{Y}$ be a silting complex. Take a triangle
\[
\xymatrix{\boldsymbol{Z} \ar[r]& \boldsymbol{Y}' \ar[r]^-f& \boldsymbol{X} \ar[r]&\boldsymbol{Z}[1]}
\]
such that $f$ is a minimal right ${\rm add}\boldsymbol{Y}$-approximation of $\boldsymbol{X}$. Then $|\boldsymbol{Z} \oplus \boldsymbol{Y}|=|\boldsymbol{X} \oplus \boldsymbol{Y}|$ and $\boldsymbol{Z} \oplus \boldsymbol{Y}$ is a silting complex, called the {right mutation} of $\boldsymbol{X} \oplus \boldsymbol{Y} $ at $\boldsymbol{X}$.
\end{proposition}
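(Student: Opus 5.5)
We need to prove Aihara–Iyama mutation: given silting $\boldsymbol{X}\oplus\boldsymbol{Y}$ and the triangle $\boldsymbol{Z}\to\boldsymbol{Y}'\xrightarrow{f}\boldsymbol{X}\to\boldsymbol{Z}[1]$ with $f$ a minimal right $\operatorname{add}\boldsymbol{Y}$-approximation, $\boldsymbol{Z}\oplus\boldsymbol{Y}$ is silting and $|\boldsymbol{Z}\oplus\boldsymbol{Y}|=|\boldsymbol{X}\oplus\boldsymbol{Y}|$. Throughout, $\boldsymbol{X}$ is assumed to have no nonzero summand in $\operatorname{add}\boldsymbol{Y}$, as in the standard setting; otherwise the count fails. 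Write $(-,-)$ for $\operatorname{Hom}_{K^b(\operatorname{proj}A)}$.

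\textbf{Generation.} The triangle shows $\boldsymbol{X}\in\operatorname{thick}(\boldsymbol{Z}\oplus\boldsymbol{Y})$. Hence $\operatorname{thick}(\boldsymbol{Z}\oplus\boldsymbol{Y})$ contains $\boldsymbol{X}\oplus\boldsymbol{Y}$ and so equals $K^b(\operatorname{proj}A)$.

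\textbf{Vanishing of $(\boldsymbol{Z}\oplus\boldsymbol{Y},(\boldsymbol{Z}\oplus\boldsymbol{Y})[i])$ for $i>0$.} The term $(\boldsymbol{Y},\boldsymbol{Y}[i])$ vanishes by hypothesis.

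For $(\boldsymbol{Y},\boldsymbol{Z}[i])$, apply $(\boldsymbol{Y},-)$ to the triangle. This gives the exact sequence $(\boldsymbol{Y},\boldsymbol{Y}'[i-1])\to(\boldsymbol{Y},\boldsymbol{X}[i-1])\to(\boldsymbol{Y},\boldsymbol{Z}[i])\to(\boldsymbol{Y},\boldsymbol{Y}'[i])$. The last term vanishes. For $i=1$ the first map is surjective because $f$ is an approximation; for $i\ge2$ the middle term vanishes. Either way $(\boldsymbol{Y},\boldsymbol{Z}[i])=0$.

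For $(\boldsymbol{Z},\boldsymbol{Y}[i])$, apply $(-,\boldsymbol{Y}[i])$. This gives $(\boldsymbol{Y}',\boldsymbol{Y}[i])\to(\boldsymbol{Z},\boldsymbol{Y}[i])\to(\boldsymbol{X}[-1],\boldsymbol{Y}[i])=(\boldsymbol{X},\boldsymbol{Y}[i+1])$, and both outer terms vanish.

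For $(\boldsymbol{Z},\boldsymbol{Z}[i])$, apply $(\boldsymbol{Z},-)$ to the rotated triangle $\boldsymbol{Y}'[i]\to\boldsymbol{X}[i]\to\boldsymbol{Z}[i+1]$, shifted suitably. This gives $(\boldsymbol{Z},\boldsymbol{X}[i-1])\to(\boldsymbol{Z},\boldsymbol{Z}[i])\to(\boldsymbol{Z},\boldsymbol{Y}'[i])$. The right term vanishes by the previous step. For the left term, apply $(-,\boldsymbol{X}[i-1])$ to the triangle to get $(\boldsymbol{Y}',\boldsymbol{X}[i-1])\to(\boldsymbol{Z},\boldsymbol{X}[i-1])\to(\boldsymbol{X},\boldsymbol{X}[i])=0$. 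When $i\ge2$ the left term is zero, so $(\boldsymbol{Z},\boldsymbol{X}[i-1])=0$.

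When $i=1$, take $g\in(\boldsymbol{Z},\boldsymbol{Z}[1])$, where $\boldsymbol{Z}\xrightarrow{u}\boldsymbol{Y}'$ is the first map and $h:\boldsymbol{X}\to\boldsymbol{Z}[1]$ the connecting map. Its image in $(\boldsymbol{Z},\boldsymbol{Y}'[1])$ is zero, so $g=h\circ a$ for some $a:\boldsymbol{Z}\to\boldsymbol{X}$. By the sequence just derived, $a=f\circ b$ for some $b:\boldsymbol{Z}\to\boldsymbol{Y}'$. Then $g=h f b=0$, since $hf=0$. This composition trick is the delicate point.

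\textbf{Counting.} Minimality of $f$ forces $\boldsymbol{Z}$ to have no summand in $\operatorname{add}\boldsymbol{Y}$. Indeed, a summand $\boldsymbol{Y}_0$ of $\boldsymbol{Z}$ in $\operatorname{add}\boldsymbol{Y}$ has zero map to $\boldsymbol{Y}'$, because $(\boldsymbol{Z},\boldsymbol{Y}'[0])$ factors through what we can control and $f$ is right minimal. The standard argument is the triangle version of the Wakamatsu lemma: if $\boldsymbol{Z}=\boldsymbol{Z}_1\oplus\boldsymbol{Y}_0$ with $u|_{\boldsymbol{Y}_0}$ split mono, minimality of $f$ contradicts. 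Similarly, indecomposable summands of $\boldsymbol{X}$ correspond bijectively to those of $\boldsymbol{Z}$; this is the main obstacle.

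I would prove the count via Grothendieck groups. Both $\boldsymbol{X}\oplus\boldsymbol{Y}$ and $\boldsymbol{Z}\oplus\boldsymbol{Y}$ are silting, and for a basic silting object $\boldsymbol{S}$ its indecomposable summands form a basis of $K_0(K^b(\operatorname{proj}A))\cong\mathbb{Z}^{|A|}$ (Aihara--Iyama). This gives $|\boldsymbol{Z}\oplus\boldsymbol{Y}|=|A|=|\boldsymbol{X}\oplus\boldsymbol{Y}|$, counting distinct indecomposables.
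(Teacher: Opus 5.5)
The paper gives no proof of this proposition; it is quoted from Aihara--Iyama. Your argument (Hom-vanishing from the triangle plus the approximation property, generation from the triangle, and the count from the Aihara--Iyama fact that the indecomposable summands of a basic silting object form a basis of $K_0$) is essentially the standard proof from that source, and it is correct.

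Two small touch-ups are needed. First, in the case $i=1$ the exact sequence only gives $a=c\circ u$ with $c\colon\boldsymbol{Y}'\to\boldsymbol{X}$, not $a=f\circ b$ directly. To conclude $g=0$, either observe that $h\circ c\in(\boldsymbol{Y}',\boldsymbol{Z}[1])=0$ by your earlier step, or use the approximation property to write $c=f\circ d$. Second, your standing assumption that $\boldsymbol{X}$ has no summand in $\operatorname{add}\boldsymbol{Y}$ is unnecessary. The claim that ``otherwise the count fails'' is false, since $|\cdot|$ counts isomorphism classes, and the $K_0$ argument never uses the assumption. The vague paragraph about summands of $\boldsymbol{Z}$ can simply be dropped.
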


Using Proposition \ref{mutation} and the derived equivalence in \eqref{RHom}, we have the following.
\begin{theorem}\label{main1}
Let $(A, T, B)$ be a tilting triple with $A$ hereditary. Let \( \boldsymbol{X} = (X^i,d_X^i), \boldsymbol{Y} = (Y^i,d_Y^i) \in K^b(\operatorname{proj} A) \) such that $d_X^{-1}$ and $d_Y^{-1}$ are right minimal.
Suppose $\boldsymbol{X} \oplus \boldsymbol{Y}$ is a basic $2$-term silting complex satisfying that
\({\rm H}^{0}\boldsymbol{X} \in \mathcal{T}(T)\) and \({\rm H}^{0} \boldsymbol{Y} \in \mathcal{F}(T).\)
Then for any triangle
\begin{equation}\label{star3}
\xymatrix{\boldsymbol{Z}[-1]\ar[r]&\boldsymbol{Y}^{\prime}\ar[r]^-f& \boldsymbol{X} \ar[r]&\boldsymbol{Z}}
\end{equation}
such that $f$ is a minimal right ${\rm add} \boldsymbol{Y}$-approximation of $\boldsymbol{X}$, the complex
\[
\mathbf{R}\operatorname{Hom}_A
   (T,\boldsymbol{Z}\oplus\boldsymbol{Y}[1])
\]
is isomorphic to a basic $3$-term silting complex $\boldsymbol{W}$ in
$K^b(\operatorname{proj}B)$.
\end{theorem}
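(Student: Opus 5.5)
The plan is to transport a right mutation on the $A$-side through the derived equivalence \eqref{RHom}, and then to check the degree bounds using hereditarity of $A$.

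\textbf{Silting and basicness.} Rotating \eqref{star3} gives the triangle $\boldsymbol{Z}[-1]\to\boldsymbol{Y}'\xrightarrow{f}\boldsymbol{X}\to\boldsymbol{Z}$. By Proposition \ref{mutation}, $\boldsymbol{Z}[-1]\oplus\boldsymbol{Y}$ is a silting complex with $|\boldsymbol{Z}[-1]\oplus\boldsymbol{Y}|=|\boldsymbol{X}\oplus\boldsymbol{Y}|$. Shifting by $[1]$ preserves both properties, so $\boldsymbol{Z}\oplus\boldsymbol{Y}[1]$ is silting.

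Since $A$ is hereditary, it has finite global dimension. The derived equivalence then shows that $B$ also has finite global dimension, so $K^b(\operatorname{proj}A)\simeq D^b(A)$ and $D^b(B)\simeq K^b(\operatorname{proj}B)$. The triangle equivalence $F:=\mathbf{R}\Hom_A(T,-)$ preserves silting objects, indecomposable summands and basicness. Hence $F(\boldsymbol{Z}\oplus\boldsymbol{Y}[1])$ is isomorphic to a basic silting complex $\boldsymbol{W}$. (Basicness also needs that $\boldsymbol{Z}$ and $\boldsymbol{Y}[1]$ share no summands; this follows because the mutation of a basic silting object is basic.) Since $F(T)\cong B$, it remains to show that $\boldsymbol{W}$ is $3$-term. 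By Lemma \ref{bjdx} it suffices to check $B\geq\boldsymbol{W}\geq B[2]$, which amounts to
\[
\Hom_{D^b(A)}(T,\boldsymbol{Z}[i]\oplus\boldsymbol{Y}[1+i])=0 \quad\text{and}\quad \Hom_{D^b(A)}(\boldsymbol{Z}\oplus\boldsymbol{Y}[1],T[2+i])=0 \qquad\text{for all } i>0.
\]

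\textbf{Cohomology of $\boldsymbol{Z}$.} Because $A$ is hereditary, every complex splits as $\bigoplus_j{\rm H}^j(-)[-j]$, so both conditions reduce to Ext computations between $T$ and cohomology modules. Since $\boldsymbol{Z}$ is a cone of a map between $2$-term complexes, it has cohomology only in degrees $-2,-1,0$. Three facts are needed.
\begin{itemize}
\item From the long exact sequence, ${\rm H}^0\boldsymbol{Y}'\to{\rm H}^0\boldsymbol{X}\to{\rm H}^0\boldsymbol{Z}\to 0$, so ${\rm H}^0\boldsymbol{Z}$ is a quotient of ${\rm H}^0\boldsymbol{X}\in\mathcal{T}(T)$ and therefore lies in $\mathcal{T}(T)$.
\item ${\rm H}^{-2}\boldsymbol{Z}=\ker({\rm H}^{-1}\boldsymbol{Y}'\to{\rm H}^{-1}\boldsymbol{X})$ is a submodule of ${\rm H}^{-1}\boldsymbol{Y}'\subseteq Y'^{-1}$, hence projective over the hereditary algebra $A$.
\item ${\rm H}^{-1}\boldsymbol{Y}$ is projective for the same reason, and ${\rm H}^0\boldsymbol{Y}\in\mathcal{F}(T)$ by hypothesis.
\end{itemize}

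\textbf{Checking the bounds.} For the first condition, let $i\geq1$. The term $\Hom(T,{\rm H}^j\boldsymbol{Z}[i-j])$ vanishes for $j\leq-1$, since then $i-j\geq2$. For $j=0$ it equals $\Ext^i_A(T,{\rm H}^0\boldsymbol{Z})$, which is $0$ for $i=1$ because ${\rm H}^0\boldsymbol{Z}\in\mathcal{T}(T)$ and is $0$ for $i\geq2$ by hereditarity. The $\boldsymbol{Y}[1+i]$ terms have shift at least $2$ and vanish.

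For the second condition, let $k=2+i\geq3$. The terms $\Hom({\rm H}^j\boldsymbol{Z}[-j],T[k])=\Ext^{k+j}_A({\rm H}^j\boldsymbol{Z},T)$ vanish except possibly when $j=-2$ and $k=3$. That remaining term is $0$ because ${\rm H}^{-2}\boldsymbol{Z}$ is projective. Similarly, $\Hom({\rm H}^j\boldsymbol{Y}[1-j],T[k])=\Ext^{k+j-1}_A({\rm H}^j\boldsymbol{Y},T)$ vanishes except possibly when $j=-1$ and $k=3$, and that term is killed by projectivity of ${\rm H}^{-1}\boldsymbol{Y}$.

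\textbf{Main obstacle.} The paper's definition of an $n$-term complex also requires ${\rm H}^{-1}\boldsymbol{W}=0$. We have
\[
{\rm H}^{-1}\boldsymbol{W}\cong\Hom(T,\boldsymbol{Z}[-1]\oplus\boldsymbol{Y})\cong\Hom_A(T,{\rm H}^{-1}\boldsymbol{Z})\oplus\Ext^1_A(T,{\rm H}^{-2}\boldsymbol{Z})\oplus\Hom_A(T,{\rm H}^0\boldsymbol{Y})\oplus\Ext^1_A(T,{\rm H}^{-1}\boldsymbol{Y}).
\]
The term $\Hom_A(T,{\rm H}^0\boldsymbol{Y})$ vanishes because ${\rm H}^0\boldsymbol{Y}\in\mathcal{F}(T)$. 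The remaining terms are where the right minimality of $d_X^{-1}$ and $d_Y^{-1}$, together with the minimality of the approximation $f$, must be used. I would first try to show that ${\rm H}^{-1}\boldsymbol{X}$, ${\rm H}^{-1}\boldsymbol{Y}$ and ${\rm H}^{-1}\boldsymbol{Z}$ vanish or are controlled, using that $d^{-1}$ is injective up to the minimality assumption. If that fails, I would weaken the target to the bound $B\geq\boldsymbol{W}\geq B[2]$ from Lemma \ref{bjdx}, which is what the rest of the argument needs.
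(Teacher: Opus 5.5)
\textbf{Verdict: the proposal has a genuine gap.} It establishes only half of the theorem. The part you flag as the main obstacle, ${\rm H}^{-1}\boldsymbol{W}=0$, is the missing step.

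\textbf{What is correct.} Your argument that the complex is silting and basic is correct. Your check of $B\geq\boldsymbol{W}\geq B[2]$ is also correct. The cohomological splitting over the hereditary algebra $A$ handles both bounds, and you rightly use that ${\rm H}^{-2}\boldsymbol{Z}$ and ${\rm H}^{-1}\boldsymbol{Y}$ are projective. This route differs slightly from the paper's degree argument and does not even need right minimality.

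\textbf{The gap.} By Definition \ref{silting}, being $3$-term includes ${\rm H}^{-1}\boldsymbol{W}=0$. You leave three of the four summands unproved: $\Hom_A(T,{\rm H}^{-1}\boldsymbol{Z})$, $\Ext^1_A(T,{\rm H}^{-2}\boldsymbol{Z})$ and $\Ext^1_A(T,{\rm H}^{-1}\boldsymbol{Y})$. Your fallback of weakening the target to $B\geq\boldsymbol{W}\geq B[2]$ does not prove the theorem. That bound only says $\boldsymbol{W}$ is concentrated in degrees $-2,-1,0$. The vanishing of the middle cohomology is exactly the part of the statement where the right-minimality hypotheses on $d_X^{-1}$ and $d_Y^{-1}$ are used.

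\textbf{The missing idea.} Over a hereditary algebra, a right minimal map $d:P^{-1}\to P^0$ between projectives is injective. Indeed, $\operatorname{Im}d\subseteq P^0$ is projective, so $P^{-1}\cong\operatorname{Ker}d\oplus\operatorname{Im}d$ with $d$ vanishing on the summand $\operatorname{Ker}d$. Right minimality then forces $\operatorname{Ker}d=0$; this is the content of \cite[Proposition 5.5]{MAR}, which the paper invokes.

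\textbf{How it closes the gap.} The fact gives ${\rm H}^{-1}\boldsymbol{X}={\rm H}^{-1}\boldsymbol{Y}=0$, and also ${\rm H}^{-1}\boldsymbol{Y}'=0$ because $\boldsymbol{Y}'\in\operatorname{add}\boldsymbol{Y}$. Your own long exact sequence then yields two injections.
\begin{itemize}
\item ${\rm H}^{-2}\boldsymbol{Z}\hookrightarrow{\rm H}^{-1}\boldsymbol{Y}'=0$, so ${\rm H}^{-2}\boldsymbol{Z}=0$.
\item ${\rm H}^{-1}\boldsymbol{Z}\hookrightarrow{\rm H}^{0}\boldsymbol{Y}'\in\operatorname{add}{\rm H}^0\boldsymbol{Y}\subseteq\mathcal{F}(T)$. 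Since $\mathcal{F}(T)$ is closed under submodules, $\Hom_A(T,{\rm H}^{-1}\boldsymbol{Z})=0$.
\end{itemize}
Hence all four summands of ${\rm H}^{-1}\boldsymbol{W}$ vanish.

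\textbf{Comparison with the paper.} The paper packages the same computation as the exact sequence $\Hom(T,\boldsymbol{X}[-1])\to\Hom(T,\boldsymbol{Z}[-1])\to\Hom(T,\boldsymbol{Y}')$. Its outer terms vanish because $\boldsymbol{X}\cong{\rm H}^0\boldsymbol{X}$ and $\boldsymbol{Y}'\cong{\rm H}^0\boldsymbol{Y}'\in\mathcal{F}(T)$ in $D^b(A)$. Minimality of $f$ plays no role in this step. It enters only through Proposition \ref{mutation}, to get basicness.
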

\begin{proof}
By Proposition \ref{mutation}, $\boldsymbol{Z}[-1] \oplus \boldsymbol{Y}$ is a basic silting complex in $K^b(\operatorname{proj} A)$. Using the derived equivalence in \eqref{RHom}, we get that
$\mathbf{R}\operatorname{Hom}_A(T,\boldsymbol{Z}\oplus\boldsymbol{Y}[1])$
is isomorphic to a basic silting complex
$\boldsymbol{W}=(W^i,d_W^i)$ in $K^b(\operatorname{proj}B)$.

\noindent{\bf Claim 1:} $W^{i}=0$ for any $i \notin \{-2,-1,0\}$.
By Lemma \ref{bjdx}, in order to prove Claim $1$, it suffices to prove that $B\geq \boldsymbol{W}\geq B[2]$, where $B$ is the stalk complex of the regular module \( B \).
Note that
\begin{flalign*}
\bigoplus\limits_{i>0}\operatorname{Hom}_{K^b(\operatorname{proj}B)}(B,\boldsymbol{W}[i])
&\cong\bigoplus\limits_{i>0}\operatorname{Hom}_{D^b(B)}
\bigl(\mathbf{R}\operatorname{Hom}_A(T,T),
\mathbf{R}\operatorname{Hom}_A(T,\boldsymbol{Z}\oplus\boldsymbol{Y}[1])[i]\bigr)
\\
& \cong \bigoplus\limits_{i>0}\operatorname{Hom}_{D^b(A)}(T , (\boldsymbol{Z} \oplus \boldsymbol{Y}[1])\,[i])
\\
& \cong \bigoplus\limits_{i>0}\operatorname{Hom}_{D^b(A)}(T , \boldsymbol{Z} [i]) \bigoplus \bigoplus\limits_{i>0}\operatorname{Hom}_{D^b(A)}(T , \boldsymbol{Y}[i+1])
\\
& \cong \operatorname{Hom}_{D^b(A)}(T , \boldsymbol{Z}[1]) \bigoplus \bigoplus\limits_{i>0} \operatorname{Hom}_{D^b(A)}(\boldsymbol{P}_{T}, (\boldsymbol{Y} \oplus \boldsymbol{Z}) [i+1]),
\end{flalign*}
where $\boldsymbol{P}_{T}$ is the complex $P_{-1}\stackrel{\delta}{\rightarrow}P_0$ such that ${0\to P_{-1}\stackrel{\delta}{\rightarrow} P_0\to T\to0}$ is a minimal projective resolution  of $T$.

Since $\boldsymbol{X}$, $\boldsymbol{Y}$ are $2$-term complexes, set $\boldsymbol{Z}=(Z^i,d_Z^i)$, then it is easy to see $Z^{i}=0$ for any $i \notin \{-2,-1,0\}$.
Thus, it is direct to see
$$\operatorname{Hom}_{D^b(A)}(\boldsymbol{P}_{T}, (\boldsymbol{Y} \oplus \boldsymbol{Z}) [i+1])\cong\Hom_{K^b({\rm proj}A)}(\boldsymbol{P}_{T}, (\boldsymbol{Y} \oplus \boldsymbol{Z}) [i+1])=0~\text{for~any}~i>0.$$
Applying $\operatorname{Hom}_{D^b(A)}(T,-)$ to the triangle (\ref{star3}), we get the exact sequence
\[\xymatrix{\operatorname{Hom}_{D^b(A)}(T,\boldsymbol{X}[1]) \ar[r]& \operatorname{Hom}_{D^b(A)}(T,\boldsymbol{Z}[1]) \ar[r]& \operatorname{Hom}_{D^b(A)}(T,\boldsymbol{Y}'[2]).}
\]
Since $d_X^{-1}$ and $d_Y^{-1}$ are right minimal and $A$ is hereditary, by \cite[Proposition 5.5]{MAR}, we have
$\boldsymbol{X}\cong{\rm H}^{0} \boldsymbol{X}$ and $\boldsymbol{Y}\cong {\rm H}^{0} \boldsymbol{Y}$ in $D^b(A).$
Since ${\rm H}^{0}\boldsymbol{X} \in \mathcal{T}(T)$, we have $\operatorname{Hom}_{D^b(A)}(T,\boldsymbol{X}[1]) \cong \operatorname{Ext}_{A}^1(T,{\rm H}^{0}\boldsymbol{X})=0$. By ${\rm pd}T \leq 1$, we get $\operatorname{Hom}_{D^b(A)}(T,\boldsymbol{Y}'[2]) \cong \operatorname{Ext}_{A}^2(T,{\rm H}^{0}\boldsymbol{Y}')=0$. Thus, $\operatorname{Hom}_{D^b(A)}(T,\boldsymbol{Z}[1])=0$. Hence, we conclude that
$\operatorname{Hom}_{K^b(\text{proj}\,B)}(B , \boldsymbol{W}[i])=0$ for any $i>0$, i.e. $B\geq\boldsymbol{W}$.
On the other hand, for any $i>2$ we have
\begin{flalign*}
 \operatorname{Hom}_{K^b(\operatorname{proj}B)}(\boldsymbol{W},B[i])
 &\cong \operatorname{Hom}_{D^b(B)}
 \bigl(\mathbf{R}\operatorname{Hom}_A(T,\boldsymbol{Z}\oplus\boldsymbol{Y}[1]),
 \mathbf{R}\operatorname{Hom}_A(T,T)[i]\bigr) \\
& \cong \operatorname{Hom}_{D^b(A)}(\boldsymbol{Z} \oplus \boldsymbol{Y}[1],T[i])\\
&=0,
\end{flalign*}
where the last equality follows from the degrees of
$\boldsymbol{Z}\oplus\boldsymbol{Y}[1]$ and a projective resolution of $T$.
That is, $\boldsymbol{W}\geq B[2]$. This proves Claim 1.

\noindent{\bf Claim 2:} $\boldsymbol{W}$ is a $3$-term complex in $K^b({\rm proj}B)$.

By definition, we only need to prove ${\rm H}^{-1}\boldsymbol{W}=0$. Note that
\begin{equation}\label{ztongd}
{\rm H}^{-1}\boldsymbol{W}\cong{\rm H}^{-1}\mathbf{R}\Hom_A(T, \boldsymbol{Z} \oplus \boldsymbol{Y}[1])
\cong \operatorname{Hom}_{D^b(A)}(T, \boldsymbol{Z}[-1] \oplus \boldsymbol{Y}).\end{equation}
Since ${\rm H}^{0}\boldsymbol{Y} \in \mathcal{F}(T)$, we get $\operatorname{Hom}_{D^b(A)}(T, \boldsymbol{Y}) \cong \operatorname{Hom}_{A}(T, \operatorname{H}^{0} \boldsymbol{Y})=0$. Applying the functor $\operatorname{Hom}_{D^b(A)}(T,-)$ to the triangle (\ref{star3}), we get the exact sequence
\[\xymatrix{\Hom_{D^b(A)}(T,\boldsymbol{X}[-1]) \ar[r]& \Hom_{D^b(A)}(T,\boldsymbol{Z}[-1]) \ar[r]&  \Hom_{D^b(A)}(T,\boldsymbol{Y}').}\]
Since $\operatorname{Hom}_{D^b(A)}(T, \boldsymbol{Y})=0$, we get $\operatorname{Hom}_{D^b(A)}(T,\boldsymbol{Y}')=0$. Noting that $$\Hom_{D^b(A)}(T,\boldsymbol{X}[-1]) \cong \Hom_{D^b(A)}(T,({\rm H}^{0}\boldsymbol{X})[-1])=0,$$ we obtain $\operatorname{Hom}_{D^b(A)}(T,\boldsymbol{Z}[-1])=0$. Hence, by \eqref{ztongd}, Claim $2$ follows.

Therefore, we complete the proof.
\end{proof}

\begin{lemma}\label{add-gen}
Let $M, N \in {\rm mod}A$ and $f:M'\rightarrow N$ be a right ${\rm add}M$-approximation of $N$. Take the epi-monic factorization of $f$:
$$\xymatrix{M'\ar[rr]^-f\ar@{->>}[rd]_-{\tilde{f}}&&N\\
&\im f\ar@{^{(}->}[ru]_-{\sigma_f}&}$$
Then $\sigma_f$ is a right $\operatorname{Gen}M$-approximation of $N$.
\end{lemma}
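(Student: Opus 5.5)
We have to check two things: that $\operatorname{Im} f$ (together with $\sigma_f$) belongs to $\operatorname{Gen}M$, and that every morphism from an object of $\operatorname{Gen}M$ to $N$ factors through $\sigma_f$. The first is immediate: $M'\in\operatorname{add}M$ and $\tilde f: M'\to \operatorname{Im} f$ is an epimorphism, so $\operatorname{Im} f\in\operatorname{Gen}M$ by the definition of $\operatorname{Gen}$.

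For the factorization property, let $L\in\operatorname{Gen}M$ and $g:L\to N$ be any morphism. We choose an epimorphism $p:M''\twoheadrightarrow L$ with $M''\in\operatorname{add}M$. Since $f$ is a right $\operatorname{add}M$-approximation, the composite $gp:M''\to N$ factors as $gp=fh$ for some $h:M''\to M'$. Then
\[
gp=\sigma_f\tilde f h .
\]
Now we want $g$ itself to factor through $\sigma_f$. Since $\sigma_f$ is a monomorphism, it suffices to show that $\operatorname{Im} g\subseteq\operatorname{Im} f$ as submodules of $N$. This holds because $p$ is surjective, so $\operatorname{Im} g=\operatorname{Im}(gp)=\operatorname{Im}(\sigma_f\tilde f h)\subseteq\operatorname{Im}\sigma_f=\operatorname{Im} f$. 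Consequently $g$ corestricts to a morphism $g':L\to\operatorname{Im} f$ with $\sigma_f g'=g$.

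If we want $g'$ explicitly, we can set $g'=\tilde f h\circ$ (the induced map on the cokernel of $\ker p$). This works because $\sigma_f\tilde f h$ vanishes on $\ker p$ (it equals $gp$) and $\sigma_f$ is monic, so $\tilde f h$ vanishes on $\ker p$ and descends to $L=M''/\ker p$.

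There is no real obstacle here. The only point needing care is using the injectivity of $\sigma_f$ together with the surjectivity of $p$ to pass from a factorization of $gp$ to a factorization of $g$; everything else is direct from the definitions. We do not claim minimality of $\sigma_f$, since the statement does not require it.
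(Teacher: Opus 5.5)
Your proposal is correct and follows the paper's proof essentially step for step. Both arguments put $\operatorname{Im} f$ in $\operatorname{Gen}M$ via the epimorphism $\tilde f$, lift $gp$ through the $\operatorname{add}M$-approximation $f$, and use $\operatorname{Im} g=\operatorname{Im}(gp)\subseteq\operatorname{Im} f$ with $\sigma_f$ monic to factor $g$. Your extra remark that $\tilde f h$ descends to $L\cong M''/\ker p$ is a valid explicit form of that factorization.
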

\begin{proof}
Since $M'\in\operatorname{add}M$ and $\widetilde f$ is epic,
$\operatorname{Im}f$ belongs to $\operatorname{Gen}M$. For any morphism
$g:W\to N$ with $W\in\operatorname{Gen}M$, choose an epimorphism
$h:M''\twoheadrightarrow W$ with $M''\in\operatorname{add}M$. Since $f$ is a
right $\operatorname{add}M$-approximation, there is a morphism
$\alpha:M''\to M'$ such that $gh=f\alpha$. Thus,
$\operatorname{Im}(gh)\subseteq\operatorname{Im}f$. Since $h$ is epic,
$\operatorname{Im}(gh)=\operatorname{Im}g$, and then $g$ factors through
$\sigma_f:\operatorname{Im}f\hookrightarrow N$. Hence, $\sigma_f$ is a right
$\operatorname{Gen}M$-approximation.
\end{proof}

\begin{proposition}\label{mac}
Keep the same conditions and notation as in Theorem \ref{main1}. Suppose that
${\rm H}^{0}(\operatorname{cone}\lambda)\neq0$ for any nonzero morphism
$\lambda:\boldsymbol{Y}''\longrightarrow\boldsymbol{X}''$
in $K^b(\operatorname{proj}A)$,
where $\boldsymbol{Y}''\in{\rm add}\boldsymbol{Y}$ and
$\boldsymbol{X}''\in{\rm add}\boldsymbol{X}$, then
$\boldsymbol{Z}\cong{\rm H}^{0}\boldsymbol{Z}
\cong M/\operatorname{Tr}_MN$
in $D^b(A)$, where
$M={\rm H}^{0}\boldsymbol{X}$ and $N={\rm H}^{0}\boldsymbol{Y}$.
\end{proposition}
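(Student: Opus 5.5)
Since $A$ is hereditary and $d_X^{-1},d_Y^{-1}$ are right minimal, we use (as in the proof of Theorem \ref{main1}) \cite[Proposition 5.5]{MAR}: $\boldsymbol{X}\cong M$ and $\boldsymbol{Y}\cong N$ in $D^b(A)$, hence $\boldsymbol{Y}'\cong N'\in\operatorname{add}N$. Morphisms in $D^b(A)$ between stalk complexes in degree $0$ are module homomorphisms, so the triangle \eqref{star3} becomes
\[
\boldsymbol{Z}[-1]\longrightarrow N'\xrightarrow{\,f\,}M\longrightarrow\boldsymbol{Z},
\]
where $f:N'\to M$ is a minimal right $\operatorname{add}N$-approximation of $M$ in $\operatorname{mod}A$. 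Thus $\boldsymbol{Z}\cong\operatorname{cone}(f)$. The cohomology long exact sequence then gives ${\rm H}^{-1}\boldsymbol{Z}\cong\operatorname{Ker}f$, ${\rm H}^0\boldsymbol{Z}\cong\operatorname{Coker}f$, and ${\rm H}^i\boldsymbol{Z}=0$ otherwise. Over a hereditary algebra every complex is isomorphic to the direct sum of its shifted cohomologies. So $\boldsymbol{Z}\cong{\rm H}^0\boldsymbol{Z}$ as soon as $\operatorname{Ker}f=0$.

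Next we identify $\operatorname{Coker}f$. By Lemma \ref{add-gen}, $\operatorname{Im}f\hookrightarrow M$ is a right $\operatorname{Gen}N$-approximation of $M$. Every map $N\to M$ factors through $f$, so $\operatorname{Im}f\supseteq\sum_{g}\operatorname{Im}g=\operatorname{Tr}_MN$. Conversely, $\operatorname{Im}f$ is a sum of images of maps from summands of $N$, so $\operatorname{Im}f\subseteq\operatorname{Tr}_MN$. Hence $\operatorname{Im}f=\operatorname{Tr}_MN$ and ${\rm H}^0\boldsymbol{Z}\cong M/\operatorname{Tr}_MN$.

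The main step is showing $f$ is injective, and this is where the cone hypothesis enters. Let $K=\operatorname{Ker}f\subseteq N'$. Since $A$ is hereditary, $K$ is a submodule of $N'$, but it need not be a summand. We would try to show that $f$ decomposes so that some nonzero $\lambda:\boldsymbol{Y}''\to\boldsymbol{X}''$ has ${\rm H}^0(\operatorname{cone}\lambda)=0$, i.e.\ is surjective, which gives a contradiction. First try: decompose $M=\bigoplus M_i$ and restrict $f$ to its components $f_i:N'_i\to M_i$, which are minimal approximations. Since $\operatorname{cone}\lambda$ for $\lambda:N''\to M''$ has ${\rm H}^0=\operatorname{Coker}\lambda$, the hypothesis says exactly that no nonzero map from $\operatorname{add}N$ to $\operatorname{add}M$ is surjective. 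We would aim to show this rules out both $\operatorname{Ker}f\neq0$ and degenerate cases.

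For the injectivity itself, we would use the $\tau$-tilting/silting property of $\boldsymbol{X}\oplus\boldsymbol{Y}$, namely $\operatorname{Ext}^1_A(M\oplus N,M\oplus N)=0$, since $M\oplus N$ is tilting over hereditary $A$. From $0\to K\to N'\to\operatorname{Tr}_MN\to0$ and Lemma \ref{3.2}(i), $\operatorname{Ext}^1(N,\operatorname{Tr}_MN)=0$. We would then argue as in Bongartz-type arguments: if $K\neq0$, then since $\operatorname{Gen}N$ is a torsion class containing $\operatorname{Tr}_MN$, and heredity makes $K$ behave well, one produces a summand of $N'$ mapping to a summand of $M$ with nonzero image. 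Minimality of $f$ then forces a component $\lambda$ whose cokernel vanishes. We expect this reduction, from $\operatorname{Ker}f\ne0$ to a surjective nonzero $\lambda$, to be the main obstacle, and we would first test it indecomposable summand by summand. Once $\operatorname{Ker}f=0$, the conclusion $\boldsymbol{Z}\cong{\rm H}^0\boldsymbol{Z}\cong M/\operatorname{Tr}_MN$ follows.
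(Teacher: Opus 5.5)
Your reduction is correct and matches the paper up to the decisive step. You identify $\boldsymbol{Z}\cong\operatorname{cone}(f)$ for a minimal right $\operatorname{add}N$-approximation $f\colon N'\to M$, read off ${\rm H}^0\boldsymbol{Z}\cong M/\operatorname{Tr}_MN$ via Lemma \ref{add-gen}, and restate the cone hypothesis as ``no nonzero module in $\operatorname{add}M$ lies in $\operatorname{Gen}N$''. All of this is right. But the injectivity of $f$, i.e.\ ${\rm H}^{-1}\boldsymbol{Z}=0$, is the real content of the proposition, and you never prove it. You only describe a hoped-for ``Bongartz-type'' reduction in which minimality of $f$ forces some component $\lambda$ to have zero cokernel. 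That cannot work on its own, because right minimality by itself says nothing about $\operatorname{Ker}f$. The one vanishing you record, $\operatorname{Ext}^1_A(N,\operatorname{Tr}_MN)=0$, is not the relevant one. The missing ingredient is the maximality of the basic tilting module $M\oplus N$.

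Here is the argument you need. Put $U=\operatorname{Tr}_MN=\operatorname{Im}f$ and $Q=M\oplus N$.
\begin{itemize}
\item Since $U\in\operatorname{Gen}N\subseteq\operatorname{Gen}Q$ and $Q$ is $\tau$-rigid, Lemma \ref{tau-gen} gives $\operatorname{Ext}^1_A(Q,U)=0$.
\item Apply $\operatorname{Hom}_A(-,Q)$ and $\operatorname{Hom}_A(-,U)$ to $0\to U\to M\to M/U\to0$. Using $\operatorname{Ext}^2_A=0$, you get $\operatorname{Ext}^1_A(U,Q)=0=\operatorname{Ext}^1_A(U,U)$.
\item Hence $Q\oplus U$ is partial tilting. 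Since $|Q|=|A|$, this forces $U\in\operatorname{add}Q$.
\item $\operatorname{Gen}N$ is closed under direct summands, and by the cone hypothesis contains no indecomposable summand of $M$. Since $M\oplus N$ is basic, it follows that $U\in\operatorname{add}N$.
\item Now the inclusion $\sigma\colon U\hookrightarrow M$ factors through $f$, say $\sigma=fs$. Writing $f=\sigma\tilde f$ with $\tilde f\colon N'\twoheadrightarrow U$, you get $\tilde fs=1_U$. So $N'=s(U)\oplus\operatorname{Ker}f$, and $f$ vanishes on the second summand.
\item Right minimality of $f$ then gives $\operatorname{Ker}f=0$. The paper phrases this step as uniqueness of minimal right $\operatorname{add}N$-approximations.
\end{itemize}
This argument also shows that your intended contrapositive is true, but only by passing through $U\in\operatorname{add}(M\oplus N)$. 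If $\operatorname{Ker}f\neq0$, then $U\notin\operatorname{add}N$. So $U$ has an indecomposable summand $M_i$ of $M$ lying in $\operatorname{Gen}N$, and an epimorphism from $\operatorname{add}N$ onto $M_i$ is the surjective $\lambda$ you were looking for.
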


\begin{proof}
Since $A$ is hereditary, by
\cite[Proposition 2.1.2]{KAP} and the fact that $\boldsymbol{Z}$ is a $3$-term complex, we have the following isomorphism in $D^b(A)$
\[
\boldsymbol{Z}\cong{\rm H}^{0}\boldsymbol{Z} \oplus {\rm H}^{-1}\boldsymbol{Z}[1] \oplus {\rm H}^{-2}\boldsymbol{Z}[2].
\]
Applying the cohomology functor to the triangle \eqref{star3}, we get the exact sequence
\[\xymatrix{{\rm H}^{-2} \boldsymbol{X} \ar[r]& {\rm H}^{-2} \boldsymbol{Z} \ar[r]& {\rm H}^{-1} \boldsymbol{Y}'.}\]
Since $\boldsymbol{X}$ is a $2$-term complex, $\operatorname{H}^{-2} \boldsymbol{X}=0$.
Since $d_{Y'}^{\,-1}$ is right minimal, by \cite[Proposition 5.5]{MAR}, ${\rm H}^{-1}\boldsymbol{Y}'=0$. Thus ${\rm H}^{-2}\boldsymbol{Z}=0$.

Now it remains to prove ${\rm H}^{-1}\boldsymbol{Z}=0$. We may assume that
$\boldsymbol{Z}={\rm H}^{0}\boldsymbol{Z}\oplus
{\rm H}^{-1}\boldsymbol{Z}[1]$.

Since $d_X^{-1}$ and $d_Y^{-1}$ are right minimal, by \cite[Proposition 5.5]{MAR}, we have
 ${\rm H}^{-1}\boldsymbol{X}=0={\rm H}^{-1}\boldsymbol{Y}$. Let
$M={\rm H}^{0}\boldsymbol{X}\in\mathcal{T}(T)$ and
$N={\rm H}^{0}\boldsymbol{Y}\in\mathcal{F}(T)$. By
\cite[Theorem 3.1]{MUC} and \cite[Proposition 5.6]{MAR}, $M\oplus N$ is a
basic $\tau$-tilting module; since $A$ is hereditary, it is a basic tilting
module.

Applying the cohomology functor to \eqref{star3} and taking the epi--monic factorization of
${\rm H}^{0}f$, we obtain the following diagram whose row is exact:
\[\begin{tikzcd}
	0 & {{\rm H}^{-1}\boldsymbol{Z}} & {{\rm H}^{0}\boldsymbol{Y}'} & {{\rm H}^{0}\boldsymbol{X}=M} & {{\rm H}^{0}\boldsymbol{Z}} & 0 \\
	&&& {\im\,{\rm H}^{0}f}
	\arrow[from=1-1, to=1-2]
	\arrow[from=1-2, to=1-3]
	\arrow["{{\rm H}^{0}f}", from=1-3, to=1-4]
	\arrow[""', two heads, from=1-3, to=2-4]
	\arrow[from=1-4, to=1-5]
	\arrow[from=1-5, to=1-6]
	\arrow["{\sigma}"', hook, from=2-4, to=1-4]
\end{tikzcd}\]
Since $f$ is a minimal right ${\rm add}\boldsymbol{Y}$-approximation of
$\boldsymbol{X}$, by the dual of \cite[Lemma 6.9]{MAR}, we get that ${\rm H}^{0}f$
is a minimal right ${\rm add}N$-approximation of $M$. Set
$U=\operatorname{Im}{\rm H}^{0}f$. Lemma \ref{add-gen} shows that
$\sigma:U\hookrightarrow M$ is a right
$\operatorname{Gen}N$-approximation. We claim that
$U=\operatorname{Tr}_MN$. In fact, every morphism $N\to M$ factors through ${\rm H}^{0}f$, so its image is
contained in $U$ and $\operatorname{Tr}_MN\subseteq U$. Conversely, since
$U\in\operatorname{Gen}N$, take an epimorphism $N'\twoheadrightarrow U$ with
$N'\in\operatorname{add}N$, composing with $\sigma$ gives
$U\subseteq\operatorname{Tr}_MN$. Hence,
${\rm H}^{0}\boldsymbol{Z}\cong M/\operatorname{Tr}_MN$.

The cone condition implies that no nonzero indecomposable direct summand of
$M$ lies in $\operatorname{Gen}N$. Otherwise, there exists a direct summand $M_i$ of $ M$ such that
$M_i\in\operatorname{Gen}N$, choose an epimorphism $p:N'\twoheadrightarrow M_i$
with $N'\in\operatorname{add}N$. Under the identifications
$\boldsymbol{X}\cong M$ and $\boldsymbol{Y}\cong N$, the map $p$ is
represented by a nonzero morphism
$\lambda:\boldsymbol{Y}''\to\boldsymbol{X}_i$ in
$K^b(\operatorname{proj}A)$, with
$\boldsymbol{Y}''\in\operatorname{add}\boldsymbol{Y}$ and
$\boldsymbol{X}_i\in\operatorname{add}\boldsymbol{X}$. The cohomology exact
sequence gives ${\rm H}^{0}(\operatorname{cone}\lambda)=\operatorname{Coker}p=0$,
contrary to the hypothesis.

Let $Q=M\oplus N$. Since $U\in\operatorname{Gen}N\subseteq\operatorname{Gen}Q$ and $Q$ is
$\tau$-rigid, by Lemma \ref{tau-gen}, we get
$\operatorname{Ext}_A^1(Q,U)=0$. Applying $\operatorname{Hom}_A(-,Q)$ and
$\operatorname{Hom}_A(-,U)$ to
\[
0\longrightarrow U\longrightarrow M\longrightarrow M/U\longrightarrow0,
\]
we get $\operatorname{Ext}_A^1(U,Q)=0=\operatorname{Ext}_A^1(U,U)$. Thus, $Q\oplus U$
is partial tilting. Since the basic tilting module $Q$ has already $|A|$
indecomposable summands, we conclude that
$U\in\operatorname{add}Q$.
Note that $U\in\operatorname{Gen}N$ and $\operatorname{Gen}N$ is closed under direct summands.
Then, since no nonzero indecomposable direct summand of
$M$ lies in $\operatorname{Gen}N$, we have $U\in\operatorname{add}N$. Thus, the monomorphism
$\sigma:U\hookrightarrow M$ is a minimal right
$\operatorname{add}N$-approximation. By the uniqueness of minimal right
approximations, it is isomorphic to ${\rm H}^{0}f$. Hence, we obtain that the epimorphism
${\rm H}^{0}\boldsymbol{Y}'\twoheadrightarrow U$ is an isomorphism, and then ${\rm H}^{-1}\boldsymbol{Z}=0$.

Therefore, we complete the proof.
\end{proof}

In the following, let us illustrate that Corollary \ref{3.4} can be implied by Theorem \ref{main1}.
\begin{corollary}\label{comparison}
Keep the same conditions and notation as given in Theorem \ref{main1}. Suppose
that ${\rm H}^{0}(\operatorname{cone}\lambda)\neq0$ for every nonzero morphism
$\lambda:\boldsymbol{Y}''\to\boldsymbol{X}''$ in
$K^b(\operatorname{proj}A)$, where
$\boldsymbol{Y}''\in{\rm add}\boldsymbol{Y}$ and
$\boldsymbol{X}''\in{\rm add}\boldsymbol{X}$. Then
${\rm H}^0\boldsymbol{W}$ is a basic $2$-tilting $B$-module and is isomorphic
to $\operatorname{Hom}_{A}(T,M/\operatorname{Tr}_{M}N)
\oplus\operatorname{Ext}_{A}^{1}(T,N),$
where $M={\rm H}^{0}\boldsymbol{X}$ and
$N={\rm H}^{0}\boldsymbol{Y}$.
\end{corollary}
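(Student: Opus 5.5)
The plan is to compute $\mathbf{R}\Hom_A(T,\boldsymbol{Z}\oplus\boldsymbol{Y}[1])$ explicitly in $D^b(B)$ using Proposition \ref{mac}. Then ${\rm H}^0\boldsymbol{W}$ can be read off, and the $2$-tilting property comes from Corollary \ref{3.4}.

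First, by Proposition \ref{mac}, $\boldsymbol{Z}\cong X:=M/\operatorname{Tr}_MN$ in $D^b(A)$, where $M={\rm H}^0\boldsymbol{X}$ and $N={\rm H}^0\boldsymbol{Y}$. By \cite[Proposition 5.5]{MAR} (right minimality of $d_Y^{-1}$, $A$ hereditary) we also have $\boldsymbol{Y}\cong N$ in $D^b(A)$. Since $M\in\mathcal{T}(T)$ and $\mathcal{T}(T)$ is closed under quotients, $X\in\mathcal{T}(T)$, so $\mathbf{R}\Hom_A(T,X)\cong\Hom_A(T,X)$ is a stalk complex in degree $0$. Since $N\in\mathcal{F}(T)$, the proof of Lemma \ref{xt} gives $\mathbf{R}\Hom_A(T,N)\cong\Ext_A^1(T,N)[-1]$, hence $\mathbf{R}\Hom_A(T,N[1])\cong\Ext_A^1(T,N)$, again a stalk in degree $0$. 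Therefore
\[
\boldsymbol{W}\cong\Hom_A(T,X)\oplus\Ext_A^1(T,N)
\]
in $D^b(B)$, which is concentrated in degree $0$. Taking ${\rm H}^0$ gives the claimed isomorphism ${\rm H}^0\boldsymbol{W}\cong\Hom_A(T,M/\operatorname{Tr}_MN)\oplus\Ext_A^1(T,N)$.

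For the $2$-tilting assertion there are two options. The first is direct: $\boldsymbol{W}$ is a silting complex in $K^b(\operatorname{proj}B)$ (Theorem \ref{main1}) that is quasi-isomorphic to the module $H:={\rm H}^0\boldsymbol{W}$. A silting complex with cohomology only in degree $0$ is a projective resolution of a module, so $H$ is a tilting module in the sense of Miyashita, with $\operatorname{pd}H$ bounded by the length of $\boldsymbol{W}$. Since $\boldsymbol{W}$ is $3$-term with terms in degrees $-2,-1,0$, we get $\operatorname{pd}_BH\le2$. The silting vanishing gives $\Ext^i_B(H,H)=0$ for $i>0$, and $\operatorname{thick}H=K^b(\operatorname{proj}B)$ yields the coresolution of $B$ required by (T3). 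Basicness follows because $\boldsymbol{W}$ is basic and the quasi-isomorphism preserves indecomposable summands.

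The second option is to invoke Corollary \ref{3.4}. This requires that $M\oplus N$ be a basic tilting module, which the proof of Proposition \ref{mac} already establishes, and that no indecomposable summand of $M$ lie in $\operatorname{Gen}N$. The latter is also proven there from the cone condition. Either route concludes the argument.

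The main obstacle is the step ``a silting complex isomorphic to a stalk module gives a classical $n$-tilting module with $n$ equal to the number of terms minus one,'' specifically verifying (T3) from the thick generation. To avoid it, I would use Corollary \ref{3.4} as the primary argument for $2$-tilting and basicness, and use the derived computation only for the identification with ${\rm H}^0\boldsymbol{W}$.
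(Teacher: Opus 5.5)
Your proposal is correct. The identification of $\boldsymbol{W}$ with the stalk module $H=\operatorname{Hom}_A(T,M/\operatorname{Tr}_MN)\oplus\operatorname{Ext}_A^1(T,N)$ is exactly the paper's. Your ``first option'' for the $2$-tilting claim is also the paper's argument, essentially verbatim.

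The step you flag as the main obstacle is getting (T3) from $\operatorname{thick}H=K^b(\operatorname{proj}B)$. The paper simply invokes it as the standard derived characterization of tilting modules: a tilting complex isomorphic to a stalk module is a projective resolution of a Miyashita tilting module. So there is no need to route around it.

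Your preferred route through Corollary \ref{3.4} is nevertheless valid. $M\in\mathcal{T}(T)$ and $N\in\mathcal{F}(T)$ are hypotheses. The proof of Proposition \ref{mac} establishes that $M\oplus N$ is a basic tilting module, and that the cone condition excludes indecomposable summands of $M$ lying in $\operatorname{Gen}N$. Peng's theorem then gives that $H\cong{\rm H}^0\boldsymbol{W}$ is basic and $2$-tilting.

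The trade-off is that this inverts the purpose of the corollary. The paper states it precisely to show that Corollary \ref{3.4} can be recovered from the mutation construction of Theorem \ref{main1}, so Peng's result is meant to be an output rather than an input. Your route is not circular, since Corollary \ref{3.4} is quoted from Peng rather than proved in the paper. However, it reduces the statement to the isomorphism ${\rm H}^0\boldsymbol{W}\cong H$, and it leans on facts buried inside the proof of Proposition \ref{mac} rather than in its statement.

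The paper's route instead extracts everything directly from the silting property and three-term shape of $\boldsymbol{W}$: $\operatorname{pd}_BH\le 2$, the vanishing of $\operatorname{Ext}^{i}_B(H,H)$ for $i>0$, and generation. This is what makes it an independent derivation of Peng's theorem under the cone condition.
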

\begin{proof}
By Proposition \ref{mac}, we have
$\boldsymbol{Z}\cong{\rm H}^{0}\boldsymbol{Z}
\cong M/\operatorname{Tr}_MN$. Using
\cite[Proposition 5.5]{MAR}, we get $\boldsymbol{Y}\cong N$. Since
$M/\operatorname{Tr}_MN\in\mathcal{T}(T)$ and
$N\in\mathcal{F}(T)$, by the derived Brenner--Butler correspondence, we have
\begin{align*}
\boldsymbol{W}
\cong\mathbf{R}\operatorname{Hom}_A
   (T,\boldsymbol{Z}\oplus\boldsymbol{Y}[1])\cong
\operatorname{Hom}_A(T,M/\operatorname{Tr}_MN)
\oplus\operatorname{Ext}_A^1(T,N)=:H,
\end{align*}
where $H$ is a stalk complex in degree zero. In particular,
${\rm H}^{0}\boldsymbol{W}\cong H$.
Since $\boldsymbol{W}$ is a three-term complex of projective $B$-modules in degrees
$-2,-1,0$ and is isomorphic to $H$, we conclude that it is a projective resolution of $H$
and thus $\operatorname{pd}_B H\leq2$. Since $\boldsymbol{W}$ is silting, we get
$\operatorname{Ext}_B^i(H,H)
\cong\operatorname{Hom}_{D^b(B)}(\boldsymbol{W},\boldsymbol{W}[i])=0$
for any $i>0$ and
$\operatorname{thick}H=\operatorname{thick}\boldsymbol{W}
=K^b(\operatorname{proj}B)$. By the standard derived characterization of
classical tilting modules, $H$ is a $2$-tilting module. Finally, since
$\boldsymbol{W}$ is basic, we conclude that so is $H$.
\end{proof}


\section{Triangular endomorphism algebras and derived decompositions}
In this section, we describe the endomorphism algebra of the \( \tau_{2} \)-tilting module constructed by using Theorem \ref{3.3} and give the corresponding
derived decomposition when this module is $2$-tilting.
For related constructions involving triangular matrix algebras, we refer to
\cite{LAD,LI}.

Let $(A,T,B)$ be a tilting triple with $X\in\mathcal{T}(T)$ and
$N\in\mathcal{F}(T)$. Set
\[
H=\operatorname{Hom}_A(T,X)\oplus\operatorname{Ext}_A^1(T,N),
\quad L=X\oplus N[1],
\]
and write
\[
R=\operatorname{End}_A(X),\quad S=\operatorname{End}_A(N),\quad
U=\operatorname{Ext}_A^1(X,N).
\]
The $(S,R)$-bimodule structure on $U$ is given by
$\beta\cdot\xi\cdot\alpha=\beta_*(\alpha^*\xi)$, where $\alpha^*$ and
$\beta_*$ denote the pullback and pushout of extensions, respectively.
The derived equivalence
$F=\mathbf{R}\operatorname{Hom}_A(T,-):D^b(A)\longrightarrow D^b(B)$
from \eqref{RHom} satisfies
\[
F(X)\cong\operatorname{Hom}_A(T,X),\quad
F(N[1])\cong\operatorname{Ext}_A^1(T,N).
\]
Thus $F(L)\cong H$ in $D^b(B)$. Since
$\operatorname{Hom}_{D^b(A)}(N[1],X)=0$ and
$\operatorname{Hom}_{D^b(A)}(X,N[1])\cong U$, we have
\begin{equation}\label{triangular-algebra}
C:=\operatorname{End}_B(H)
\cong\operatorname{End}_{D^b(B)}(F(L))
\cong\operatorname{End}_{D^b(A)}(L)
\cong\begin{pmatrix}R&0\\U&S\end{pmatrix},
\end{equation}
where the last matrix denotes the triangular matrix algebra corresponding to the $(S,R)$-bimodule $U$.

Let $\mathcal{D}$ be a triangulated category. According to \cite[Definition~2.3]{KUZ} and \cite[Section~4]{BOK}, we recall that a full semi-orthogonal decomposition of
$\mathcal D$, denoted by $\mathcal D=\langle\mathcal A,\mathcal B\rangle$, consists of two full
triangulated subcategories $\mathcal A,\mathcal B$ satisfying
$\operatorname{Hom}_{\mathcal D}(\mathcal B,\mathcal A[i])=0$ for all
$i\in\mathbb Z$, such that every $D\in\mathcal{D}$ fits into a triangle with $A\in\mathcal A$ and $B\in\mathcal B$:
\[
B\longrightarrow D\longrightarrow A\longrightarrow B[1].
\]

\begin{theorem}\label{derived-triangular-application}
With the notation above, suppose that $H$ is a $2$-tilting $B$-module.
Then the following hold.
\begin{enumerate}
\item[(1)] $L$ is isomorphic to a tilting complex in
$K^b(\operatorname{proj}A)$, and there are derived equivalences
\begin{equation}\label{lgdcdj}
D^b(A)\mathop{\longrightarrow}^{F}D^b(B)
\mathop{\longrightarrow}^{\mathbf{R}\operatorname{Hom}_B(H,-)}D^b(C)\end{equation}
such that the composition of equivalences is the tilting equivalence associated to $L$.
\item[(2)] There is a full semi-orthogonal decomposition
\begin{equation}\label{semiorthogonal-pair}
K^b(\operatorname{proj}A)
=\left\langle\operatorname{thick}X,\operatorname{thick}N\right\rangle
\end{equation}
such that $\operatorname{thick}X$ and $\operatorname{thick}N$ are triangle equivalent to
$K^b(\operatorname{proj}R)$ and $K^b(\operatorname{proj}S)$,
respectively.
\end{enumerate}
\end{theorem}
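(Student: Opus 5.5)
Since $H$ is a $2$-tilting $B$-module, the standard derived characterization of (classical, finite projective dimension) tilting modules gives that $H$, viewed as a stalk complex and replaced by a finite projective resolution, is a tilting complex in $K^b(\operatorname{proj}B)$. Concretely, $\operatorname{pd}_BH\le2$ puts it in $K^b(\operatorname{proj}B)$. Condition (T2) gives $\operatorname{Hom}_{D^b(B)}(H,H[i])=0$ for $i\neq0$. Condition (T3) gives $B\in\operatorname{thick}H$. By Rickard's theorem we obtain a derived equivalence $\mathbf{R}\operatorname{Hom}_B(H,-):D^b(B)\to D^b(C)$, where $C=\operatorname{End}_B(H)$, sending $H$ to $C$.

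For (1), I transport this back along $F$. Since $F$ is a triangle equivalence with $F(L)\cong H$, the properties $\operatorname{Hom}_{D^b(A)}(L,L[i])=0$ for $i\ne0$ and $\operatorname{thick}L=\operatorname{thick}F^{-1}(B)=\operatorname{thick}T$ follow. Moreover $\operatorname{thick}T=K^b(\operatorname{proj}A)$, because $T$ is a tilting module, so $A\in\operatorname{thick}T$ by (T3) and $T\in K^b(\operatorname{proj}A)$ by (T1). Also $L$ lies in $K^b(\operatorname{proj}A)$, since the compact objects of $D^b(A)$ are preserved by equivalences: $H\in K^b(\operatorname{proj}B)$, so $L\cong F^{-1}(H)$ is perfect, characterized intrinsically, e.g.\ as the homologically finite objects. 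Hence $L$ is isomorphic to a tilting complex. The composite $G=\mathbf{R}\operatorname{Hom}_B(H,-)\circ F$ is a triangle equivalence $D^b(A)\to D^b(C)$ with $G(L)\cong C$, and its identification of $\operatorname{End}(L)$ with $C$ is the isomorphism \eqref{triangular-algebra}. Then "the tilting equivalence associated to $L$" means an equivalence sending $L$ to $C$ compatibly with $\operatorname{End}_{D^b(A)}(L)\cong C$, and $G$ is one such. I will state it in this sense, noting that standardness of the composite (both factors being derived functors of bimodule complexes) would give a canonical identification; I will not belabor this.

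For (2), I set $\mathcal A=\operatorname{thick}X$ and $\mathcal B=\operatorname{thick}N$ inside $K^b(\operatorname{proj}A)$. Both $X$ and $N$ lie there, being direct summands of $L$ up to shift.
\begin{itemize}
\item Semi-orthogonality requires $\operatorname{Hom}(N,X[i])=0$ for all $i$. For $i\neq0,1$ this holds because $L=X\oplus N[1]$ is tilting, which gives $\operatorname{Hom}(N[1],X[j])=0$ for $j\neq0$, i.e.\ for $i\neq1$. For $i=1$ it is exactly the vanishing $\operatorname{Hom}_{D^b(A)}(N[1],X)=0$ already used in \eqref{triangular-algebra}. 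This extends from generators to thick subcategories by the usual dévissage.
\item For the decomposition triangles, the class of $D$ admitting a triangle $B\to D\to A\to B[1]$ with $A\in\mathcal A$, $B\in\mathcal B$ is a thick subcategory. Closure under cones uses semi-orthogonality and the octahedral axiom; closure under summands holds because the triangle is functorial given semi-orthogonality. This class contains $X$ and $N$, so it contains $\operatorname{thick}L=K^b(\operatorname{proj}A)$.
\item For the equivalences, $X$ is a tilting object in $\operatorname{thick}X$: it has no self-extensions in nonzero degrees, being a summand of the tilting complex $L$, and it generates $\operatorname{thick}X$. Rickard/Keller's theorem then identifies $\operatorname{thick}X$ with $K^b(\operatorname{proj}\operatorname{End}(X))=K^b(\operatorname{proj}R)$. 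Similarly $\operatorname{thick}N\simeq K^b(\operatorname{proj}S)$, using $\operatorname{End}(N[1])\cong S$.
\end{itemize}

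The main obstacle is this last step: applying Keller's theorem to a thick subcategory generated by a partial tilting object. The plan is to realize it inside $D^b(C)$ via $G$, where $G(X)=Ce_1$ and $G(N[1])=Ce_2$ are indecomposable-projective summands of $C$ for the two idempotents of the triangular matrix algebra. Then $\operatorname{thick}(Ce_i)\simeq K^b(\operatorname{add}Ce_i)\simeq K^b(\operatorname{proj}e_iCe_i)$ by the projectivization functor $\operatorname{Hom}_C(Ce_i,-)$. Finally, $e_1Ce_1\cong R$ and $e_2Ce_2\cong S$.
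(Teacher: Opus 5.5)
Your proof is correct. Part (1) is essentially the paper's argument: Rickard's theorem applied to a projective resolution of $H$, transport along $F$, and the identification $\operatorname{End}_{D^b(A)}(L)\cong C$. Incidentally, $L\in\operatorname{thick}L=\operatorname{thick}T=K^b(\operatorname{proj}A)$ already gives perfection of $L$, so you do not need the compact-object characterization.

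For the decomposition in part (2) you take a genuinely different route from the paper.
\begin{itemize}
\item \textbf{The paper} first builds the decomposition in $K^b(\operatorname{proj}C)$, where it is completely explicit. Since $\operatorname{Hom}_C(fC,eC)\cong eCf=0$, the $\operatorname{add}(fC)$-components of any projective complex $P$ form a subcomplex $P_f$. The degreewise split sequence $0\to P_f\to P\to P_e\to0$ is then the decomposition triangle, and the result is transported back along the composite equivalence.
\item \textbf{You} work intrinsically in $K^b(\operatorname{proj}A)$. You check semi-orthogonality on the generators $X,N$, using that $L$ is tilting together with $\operatorname{Hom}_{D^b(A)}(N[1],X)=0$. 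You then obtain the triangles from the general fact that $\mathcal B*\mathcal A$ is thick when $\operatorname{Hom}(\mathcal B,\mathcal A[i])=0$ for all $i$.
\end{itemize}
Both arguments identify the two pieces with $K^b(\operatorname{proj}R)$ and $K^b(\operatorname{proj}S)$ in the same way, via the composite equivalence and projectivization $\operatorname{add}(eC)\simeq\operatorname{proj}(eCe)$.

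The paper's version is more concrete. It avoids the octahedral-axiom and idempotent-splitting arguments that you only sketch; closure of $\mathcal B*\mathcal A$ under summands needs both the uniqueness of the induced maps and splitting of the induced idempotents in the thick subcategories. Your version makes transparent that the decomposition depends only on $L$ being tilting plus the trivial vanishing $\operatorname{Hom}(N[1],X)=0$, not on the explicit triangular form of $C$.

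A few slips to fix:
\begin{itemize}
\item The shift index is off by sign. Since $\operatorname{Hom}(N[1],X[j])\cong\operatorname{Hom}(N,X[j-1])$, the tilting vanishing covers all $i\neq-1$. The exceptional degree is $i=-1$, not $i=1$, and that case is exactly $\operatorname{Hom}(N[1],X)=0$, as you say.
\item The paper uses right modules, so $G(X)\cong eC$ and $G(N[1])\cong fC$, with $\operatorname{End}_C(eC)\cong eCe\cong R$. Your left-module notation $Ce_i$ would produce opposite rings.
\item $eC$ and $fC$ are projective summands of $C$ but are generally not indecomposable. Your argument never needs indecomposability, so simply drop the word.
\end{itemize}
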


\begin{proof}
(1) Since $H$ is a $2$-tilting $B$-module, it is easy to see that each projective resolution of $H$
is a tilting complex.
Since $F$ restricts to an triangle equivalence between $K^b(\operatorname{proj}A)$ and $K^b(\operatorname{proj}B)$, by $F(L)\cong H$, we get $L$ is isomorphic to a tilting complex in
$K^b(\operatorname{proj}A)$, in particular, $X,N\in K^b(\operatorname{proj}A)$. Rickard's theorem \cite[Theorem 6.4]{RIC} gives the derived
equivalences in \eqref{lgdcdj}. By $F(L)\cong H$ and \eqref{triangular-algebra}, we conclude that the tilting equivalence associated to $L$ is the same as the composition of equivalences in \eqref{lgdcdj}.

(2) We first prove that there is a full semi-orthogonal decomposition for $K^b(\operatorname{proj}C)$. Let $e$ and $f=1-e$
be the diagonal idempotents corresponding to $R$ and $S$ in
\eqref{triangular-algebra}. Under the composition
equivalence in \eqref{lgdcdj}, $X$ and $N[1]$
correspond to $eC$ and $fC$, respectively. Moreover,
\(
\operatorname{Hom}_C(fC,eC)=0.
\)
For any bounded complex $P$ of finitely generated projective $C$-modules,
write each component as $P^i=P_e^i\oplus P_f^i$ with
$P_e^i\in\operatorname{add}(eC)$ and $P_f^i\in\operatorname{add}(fC)$.
Then we get a subcomplex $P_f$ of $P$  and  a
degreewise split exact sequence of complexes
\[
0\longrightarrow P_f\longrightarrow P\longrightarrow P_e
\longrightarrow0,
\]
which gives a triangle in $K^b(\operatorname{proj}A)$.  Since $eC$ and $fC$ are projective, it is easy to see
$\operatorname{Hom}_{K^b(\operatorname{proj}C)}
(\operatorname{thick}(fC),\operatorname{thick}(eC)[i])=0$ for all $i\in\mathbb{Z}$.
Thus, we obtain the full semi-orthogonal decomposition
\[
K^b(\operatorname{proj}C)
=\left\langle\operatorname{thick}(eC),\operatorname{thick}(fC)
\right\rangle.
\]
Noting that we have the equivalences
$\operatorname{add}(eC)\simeq\operatorname{proj}R$ and
$\operatorname{add}(fC)\simeq\operatorname{proj}S$, we get the triangle equivalences $\operatorname{thick}(eC)\simeq K^b(\operatorname{proj}R)$ and
$\operatorname{thick}(fC)\simeq K^b(\operatorname{proj}S)$. Using the composition
equivalence in \eqref{lgdcdj} and $\operatorname{thick}(N[1])=\operatorname{thick}N$, we get the full semi-orthogonal decomposition \eqref{semiorthogonal-pair}.
\end{proof}

\begin{corollary}\label{hereditary-triangular-application}
Under the hypotheses of Corollary \ref{3.4}, take
$X=M/\operatorname{Tr}_MN$ and retain the notation above. Then we have a full semi-orthogonal decomposition
\[
D^b(A)=\left\langle\operatorname{thick}X,
\operatorname{thick}N\right\rangle
\]
such that $\operatorname{thick}X$ and $\operatorname{thick}N$ are triangle equivalent to $D^b(R)$ and $D^b(S)$,
respectively.
\end{corollary}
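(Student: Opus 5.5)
The plan is to reduce to Theorem \ref{derived-triangular-application}. First I check that $H$ is $2$-tilting. Under the hypotheses of Corollary \ref{3.4}, $A$ is hereditary, $T$ is tilting, and $M\oplus N$ is a basic $1$-tilting $A$-module with $M\in\mathcal T(T)$ and $N\in\mathcal F(T)$. Moreover, no nonzero indecomposable summand of $M$ lies in $\operatorname{Gen}N$. That corollary then asserts that $H=\operatorname{Hom}_A(T,X)\oplus\operatorname{Ext}_A^1(T,N)$ with $X=M/\operatorname{Tr}_MN$ is a basic $2$-tilting $B$-module.

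We also need $X\in\mathcal T(T)$ for the notation of Section 5 to apply. This holds because $\mathcal T(T)$ is closed under quotients and $X$ is a quotient of $M\in\mathcal T(T)$. Also $N\in\mathcal F(T)$ by hypothesis. So the setup preceding Theorem \ref{derived-triangular-application} is in force, with $R=\operatorname{End}_A(X)$ and $S=\operatorname{End}_A(N)$.

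Next, apply Theorem \ref{derived-triangular-application}(2). This gives a full semi-orthogonal decomposition $K^b(\operatorname{proj}A)=\langle\operatorname{thick}X,\operatorname{thick}N\rangle$, with $\operatorname{thick}X\simeq K^b(\operatorname{proj}R)$ and $\operatorname{thick}N\simeq K^b(\operatorname{proj}S)$. Since $A$ is hereditary, it has finite global dimension, so $K^b(\operatorname{proj}A)\simeq D^b(A)$ as triangulated categories. Transporting along this equivalence gives the decomposition of $D^b(A)$.

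The remaining point, and the main obstacle, is to replace $K^b(\operatorname{proj}R)$ and $K^b(\operatorname{proj}S)$ by $D^b(R)$ and $D^b(S)$. For this I would show that $R$ and $S$ have finite global dimension. Theorem \ref{derived-triangular-application}(1) shows that $C$ is derived equivalent to $A$, and derived equivalence preserves finiteness of global dimension, so $\operatorname{gl.dim}C<\infty$. By \eqref{triangular-algebra}, $C$ is the triangular matrix algebra $\begin{pmatrix}R&0\\U&S\end{pmatrix}$. I would then invoke the standard fact that a triangular matrix algebra has finite global dimension if and only if both diagonal algebras do, since they are idempotent corners $eCe$ and $fCf$ with $C/CeC$ and the like behaving well.

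A more hands-on alternative avoids that fact. Because $A$ is hereditary and $X,N$ are modules, $R=\operatorname{End}_A(X)$ and $S=\operatorname{End}_A(N)$ are endomorphism algebras of rigid modules ($\operatorname{Ext}^1_A(X,X)=0$ by Lemma \ref{3.2}(vii), and $\operatorname{Ext}^1_A(N,N)=0$). I would argue via the recollement induced by the idempotent $e$: the semi-orthogonal pieces $\operatorname{thick}(eC)$ and $\operatorname{thick}(fC)$ are admissible in $D^b(C)=K^b(\operatorname{proj}C)$. Admissible subcategories of a category with finite global dimension are of the form $D^b$ of the corresponding algebra. Hence $K^b(\operatorname{proj}R)\simeq D^b(R)$, and similarly for $S$, which completes the identification.
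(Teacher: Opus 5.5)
Your proposal is correct and essentially matches the paper's proof: invoke Corollary \ref{3.4} to get that $H$ is $2$-tilting, apply Theorem \ref{derived-triangular-application}(2), use $K^b(\operatorname{proj}A)=D^b(A)$ for hereditary $A$, and get $K^b(\operatorname{proj}R)=D^b(R)$ and $K^b(\operatorname{proj}S)=D^b(S)$ from $\max\{\operatorname{gl.dim}R,\operatorname{gl.dim}S\}\leq\operatorname{gl.dim}C<\infty$ for the triangular matrix algebra $C$. Your check that $X\in\mathcal{T}(T)$ fills in a detail the paper leaves implicit, while your second, recollement-based alternative is unnecessary given the first argument and is more loosely justified.
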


\begin{proof}
By Corollary \ref{3.4},   $H$ is
$2$-tilting.
Since $A$ is hereditary, $K^b(\operatorname{proj}A)=D^b(A)$. Noting that
\[
\max\{\operatorname{gl.dim}R,\operatorname{gl.dim}S\}
\leq\operatorname{gl.dim}C<\infty,
\]
we have $K^b(\operatorname{proj}R)=D^b(R)$ and
$K^b(\operatorname{proj}S)=D^b(S)$. Hence, using Theorem \ref{derived-triangular-application}, we finish the proof.
\end{proof}

Let us provide the following example with $A$ non-hereditary to illustrate Theorem \ref{derived-triangular-application}.

\begin{example}\label{nonhereditary-triangular-example}
Retain the notation of Example \ref{nonhereditary-splitting-example}.
Thus $X=I_2\oplus S_2\oplus P_2\oplus P_5$, $N=S_3$, and
$H\cong I_{3'}\oplus P_{2'}\oplus P_{3'}\oplus P_{5'}\oplus S_{2'}$
is a basic $2$-tilting $B$-module. The endomorphism algebra of $H$ is
\[
C:=\operatorname{End}_B(H)
\cong kQ_C/\langle dc_1c_2,c_2c_3\rangle,
\qquad
Q_C:\quad
0\overset d\longrightarrow1
\overset{c_1}\longrightarrow2
\overset{c_2}\longrightarrow3
\overset{c_3}\longrightarrow4.
\]
Moreover, we have
\[
R=\operatorname{End}_A(X)
\cong k(1\xrightarrow{c_1}2\xrightarrow{c_2}3\xrightarrow{c_3}4)
/\langle c_2c_3\rangle,
\qquad S=\operatorname{End}_A(N)\cong k.
\]
By Theorem \ref{derived-triangular-application},
$A$, $B$ and $C$ are pairwise derived equivalent, and there is a full semi-orthogonal decomposition:
\[
K^b(\operatorname{proj}A)=\left\langle \operatorname{thick}X,
\operatorname{thick}N\right\rangle
=\left\langle K^b(\operatorname{proj}R),
K^b(\operatorname{proj}S)\right\rangle,
\]
see Figure \ref{fig:example53}.
\begin{figure}[htbp]
\centering
\[
K^b(\operatorname{proj}A)=
\left\langle
\textcolor{sodA}{K^b(\operatorname{proj}R)},
\textcolor{sodB}{K^b(\operatorname{proj}S)}
\right\rangle
\]
\begin{tikzpicture}[x=.8cm,y=.5cm,
  sodarr/.style={-{Stealth[length=1.1mm]},draw=black!60,line width=.35pt},
  sodcont/.style={sodarr,dashed,draw=black!35},
  soddot/.style={circle,inner sep=0pt,minimum size=2.2mm,
    outer sep=.5pt,draw=none}]
\foreach \m/\indices in {0/{1,2,3,4},1/{0,4},2/{1},3/{0,3,4},
                         4/{0,2,3,4},5/{1,4},6/{0},7/{1,3,4}}{
  \foreach \i/\parity/\yy in {0/1/3,1/1/1,2/0/2,3/1/-1,4/0/-2}{
    \node[soddot,fill=sodM] (sod\m-\i) at ({2*\m+\parity},\yy) {};
  }
  \foreach \i in \indices {
    \node[soddot,fill=sodA] at (sod\m-\i) {};
  }
}
\node[soddot,fill=sodB] at (sod0-0) {};
\node[soddot,fill=sodB] at (sod4-1) {};
\foreach \m in {0,...,7}{
  \foreach \j in {0,1,3}{\draw[sodarr] (sod\m-2)--(sod\m-\j);}
  \draw[sodarr] (sod\m-4)--(sod\m-3);
}
\foreach \m in {0,...,6}{
  \pgfmathtruncatemacro{\nextm}{\m+1}
  \foreach \j in {0,1,3}{\draw[sodarr] (sod\m-\j)--(sod\nextm-2);}
  \draw[sodarr] (sod\m-3)--(sod\nextm-4);
}
\foreach \yy in {3,1,-1}{\draw[sodcont] (-.7,\yy)--(sod0-2);}
\draw[sodcont] (-.7,-1)--(sod0-4);
\foreach \j in {0,1,3}{\draw[sodcont] (sod7-\j)--(15.75,2);}
\draw[sodcont] (sod7-3)--(15.75,-2);
\end{tikzpicture}
\caption{The semi-orthogonal decomposition of $K^b(\operatorname{proj}A)$. Blue points represent $K^b(\operatorname{proj}R)$ and orange points represent $K^b(\operatorname{proj}S)$. }
\label{fig:example53}
\end{figure}
\end{example}

\section*{Acknowledgments}
H. Zhang was partially supported by the National Natural Science Foundation of China (No.~12271257) and the Natural Science Foundation of Jiangsu Province of China (No.~BK20240137). T. Zhao was partially supported by the National Natural Science Foundation of China (No.~12471036) and the Hubei Provincial Natural Science Foundation of China (No.~2026AFA094).

\end{document}